\newcommand{\smallfont}[1]{\text{\fontsize{4}{4}\selectfont$#1$}}
\newcommand{\tinyfont}[1]{\text{\fontsize{3}{3}\selectfont$#1$}}
\definecolor{red}{rgb}{0.7,0.15,0.15}
\colorlet{RED}{red}
\definecolor{green}{rgb}{0,0.5,0}
\definecolor{blue}{rgb}{0,0,0.7}
\makeatletter \@addtoreset{equation}{section}
\theoremstyle{plain}
\newtheorem{theorem}{Theorem}[section]
\newtheorem{lemma}[theorem]{Lemma}
\newtheorem{proposition}[theorem]{Proposition}
\theoremstyle{definition}
\newtheorem{definition}[theorem]{Definition}
\newtheorem{remark}[theorem]{Remark}
\newtheorem{assumption}[theorem]{Assumption}
\def \E{\mathbb{E}}
\def \F{\mathbb{F}}
\def \G{\mathbb{G}}
\def \H{\mathbb{H}}
\def \I{\mathbb{I}}
\def \L{\mathbb{L}}
\def \N{\mathbb{N}}
\def \P{\mathbb{P}}
\def \R{\mathbb{R}}
\def \S{\mathbb{S}}
\def\Ac{{\cal A}}
\def\Bc{{\cal B}}
\def\Cc{{\cal C}}
\def\Fc{{\cal F}}
\def\Ic{{\cal I}}
\def\Kc{{\cal K}}
\def\Lc{{\cal L}}
\def\Nc{{\cal N}}
\def\Oc{{\cal O}}
\def\Pc{{\cal P}}
\def\Rc{{\cal R}}
\def\Tc{{\cal T}}
\def\Zc{{\cal Z}}
\def\eps{\varepsilon}
\def\erm{\mathrm{e}}
\def\drm{\mathrm{d}}
\def\t{\top\!}
\DeclareMathOperator*{\argmax}{argmax}
\DeclareMathOperator*{\essinf}{{essinf}^\P}
\DeclareMathOperator*{\esssup}{esssup}
\def\Cf{{\mathfrak C}}
\newcommand{\xdim}{d}
\newcommand{\bmdim}{n}
\def\d{\mathrm{d}}
\def\as{\text{\rm--a.s.}}
\def\ae{\text{\rm--a.e.}}
\def\qs{\text{\rm--q.s.}}
\def\e{{\mathrm{e}}}
\def\sigmah{\widehat{\sigma}}
\newcommand{\newinf}{\mathop{\mathrm{inf}\vphantom{\mathrm{sup}}}}
\begin{document}

\begin{frontmatter}
\title{Closed-loop equilibria for Stackelberg games: a story about stochastic targets}
\runtitle{Stochastic target approach for Stackelberg games}

\begin{aug}
\author[A]{\fnms{Camilo}~\snm{Hern\'andez}\ead[label=e1]{camilohe@usc.edu}\orcid{0000-0003-0123-5963}},
\author[B]{\fnms{Nicol\'as}~\snm{Hern\'andez-Santib\'a\~{n}ez}\ead[label=e2]{nicolas.hernandezs@usm.cl}\orcid{0000-0002-5202-7155}},
\author[C]{\fnms{Emma}~\snm{Hubert}\ead[label=e3]{emma.hubert@dauphine.psl.eu}\orcid{0000-0001-6058-6953}},
\and
\author[D]{\fnms{Dylan}~\snm{Possama\"{i}}\ead[label=e4]{dylan.possamai@math.ethz.ch}\orcid{0000-0002-9364-0124}}

\address[A]{ISE Department, University of Southern California\printead[presep={,\ }]{e1}}

\address[B]{Departamento de Matem\'atica, Universidad T\'ecnica Federico Santa Mar\'ia\printead[presep={,\ }]{e2}}

\address[C]{CEREMADE, Université Paris Dauphine \& PSL\printead[presep={,\ }]{e3}}

\address[D]{Department of Mathematics, ETH Z\"{u}rich\printead[presep={,\ }]{e4}}
\end{aug}

\begin{abstract}
We provide a general approach to reformulating any continuous-time stochastic Stackelberg differential game under \emph{closed-loop strategies} as a single-level optimisation problem with target constraints. More precisely, we consider a Stackelberg game in which the leader and the follower can both control the drift and the volatility of a stochastic output process, in order to maximise their respective expected utility. The aim is to characterise the Stackelberg equilibrium when the players adopt `closed-loop strategies', \textit{i.e.} their decisions are based \emph{solely} on the historical information of the output process, excluding especially any direct dependence on the underlying driving noise, often unobservable in real-world applications.
We first show that, by considering the---second-order---backward stochastic differential equation associated with the continuation utility of the follower as a controlled state variable for the leader, the latter's unconventional optimisation problem can be reformulated as a more standard stochastic control problem with target constraints. 
Thereafter, adapting the methodology developed by \cite{soner2002dynamic} or \cite{bouchard2010optimal}, the optimal strategies, as well as the corresponding value of the Stackelberg equilibrium, can be characterised through the solution of a well-specified system of Hamilton--Jacobi--Bellman equations. 
For a more comprehensive insight, we illustrate our approach through a simple example, facilitating both theoretical and numerical detailed comparisons with the solutions under different information structures studied in the literature.	
\end{abstract}

\begin{keyword}[class=MSC]
\kwd[Primary ]{91A65}
\kwd[; secondary ]{60H30}
\kwd{93E20}
\kwd{91A15}
\end{keyword}

\begin{keyword}
\kwd{Stackelberg games}
\kwd{dynamic programming}
\kwd{second-order backward SDE}
\kwd{stochastic target constraint}
\end{keyword}

\end{frontmatter}
\tableofcontents

\section{Introduction}

The concept of hierarchical or bi-level solutions for games was first introduced by \cite{stackelberg1934marktform} to describe market situations where certain firms wield dominance over others. In the straightforward context of a two-player non--zero-sum game, this solution concept, now widely recognised as the \emph{Stackelberg equilibrium}, illustrates a scenario where one player, designated as the leader (she), declares her strategy first. Subsequently, the second player, known as the follower (he), optimally adjusts his strategy in response to the leader’s initial move. To ascertain her optimal strategy, the leader must anticipate the follower’s reaction to any given strategy, thereby selecting the one that maximises her reward function in light of the follower’s best response. Thus, a Stackelberg equilibrium is defined by the combination of the leader’s optimal action and the follower’s rational response to that action. This solution concept is particularly pertinent in situations where players possess asymmetric power, akin to the original market conditions described by \citeauthor*{stackelberg1934marktform}, or when one player holds more information than the other. For instance, Stackelberg equilibria naturally emerge in games where only one player has knowledge of both players’ cost or reward functions, or when one player is more time-efficient in determining her optimal strategy.

\paragraph*{Dynamic Stackelberg games} After its introduction, this equilibrium concept has been thoroughly studied in static competitive economics, but the mathematical treatment of its dynamic version was not developed until the 70s, first in discrete-time models by \cite{cruz1975survey,cruz1976stackelberg},
\cite{gardner1977feedback},
\cite{basar1978new, basar1979closedloop},
and then, more interestingly for us, in continuous-time ones by 
\cite{chen1972stackelberg},  \cite{simaan1973additional,simaan1973stackelberg,simaan1976stackelberg}, \cite{papavassilopoulos1979nonclassical,papavassilopoulos1980sufficient},
\cite{papavassilopoulos1979leader},
\cite{basar1980teamoptimal},
\cite{basar1981new},
\cite{bagchi1984stackelberg}.

\smallskip

For instance, \cite{chen1972stackelberg} investigate Stackelberg solutions for a two-player non--zero-sum dynamic game with finite horizon $T>0$, in which both players can observe the state $X$ and its dynamics, but only the leader knows both reward functions. In their model, the leader first chooses her control $\alpha\in \Ac$ to maximise her reward function $J_{\rm L}$, and then the follower wishes to maximise his reward function $J_{\rm F}$ by choosing his own control $\beta\in \Bc$, given admissibility sets $\Ac$ and $\Bc$.
In this dynamic setting, the reward functions take the form
\begin{align*}
	J_{\rm L} (\alpha, \beta) \coloneqq  g_{\rm L} (X_T) + \int_{0}^{T} f_{\rm L} (t,X_t,\alpha_t,\beta_t )\d t, \;
	J_{\rm F} (\alpha, \beta) \coloneqq  g_{\rm F} (X_T) + \int_{0}^{T} f_{\rm F} (t,X_t,\alpha_t,\beta_t )\d t,
\end{align*}
subject to the following dynamics for the state process
\begin{align*}
	\drm X_t = \lambda (t,X_t,\alpha_t, \beta_t) \drm t, \; t \in [0,T], \; X_{0} =  x_0.
\end{align*}
A strategy $(\alpha^\star, \beta^\star)$ is called a \textit{Stackelberg equilibrium} if, for any $\alpha \in \Ac$
\begin{align*}
	J_{\rm L} (\alpha^\star,\beta^\star) \geq J_{\rm L}(\alpha,b^\star(\alpha)), \; \text{where} \; b^\star (\alpha) \coloneqq  \argmax_{\beta \in \Bc} \big\{J_{\rm F}(\alpha,\beta)\big\},\; \text{and} \; \beta^\star \coloneqq  b^\star (\alpha^\star).
\end{align*}
More importantly, \cite{chen1972stackelberg} introduce two nuanced refinements of the Stackelberg solution concept, contingent on the information accessible to the players: the \emph{open-loop} strategy, where decisions are made at time zero based on the initial state, and the \emph{feedback} strategy, where decisions at time $t$ are influenced solely by the current state. These distinctions lead to fundamentally different solutions, as the strategies diverge in their formulation and execution. This categorisation of Stackelberg equilibria has been pivotal in subsequent literature, particularly in the realm of stochastic dynamic Stackelberg games. It is unsurprising, then, that this framework will form the cornerstone of our analysis in this paper.

\paragraph*{Stochastic Stackelberg games} The pioneering exploration of stochastic versions of Stackelberg games dates back to the late 1970s, marked by the discrete-time models of \cite{castanon1976equilibria}, \cite{basar1979stochastic}, and \cite{basar1984feedback}. \cite[Chapter 7]{basar1999dynamic} provide a comprehensive overview of Stackelberg game theory at that time, encompassing static, deterministic discrete- and continuous-time, and stochastic discrete-time frameworks. However, it was not until the seminal work of \cite{yong2002leader}  that the literature began to incorporate continuous-time stochastic models. In this advanced framework, the output process is elegantly described as the solution to a stochastic differential equation of the following form
\begin{align}\label{eq:general_dynamic}
	\d X_t = \sigma(t,X_t,\alpha_t,\beta_t) \big( \lambda(t,X_t,\alpha_t,\beta_t) \d t + \d W_t \big), \; t \in [0,T],\; X_{0} =  x_0,
\end{align}
where $W$ is a Brownian motion, and the controls $\alpha$ and $\beta$ are chosen by the leader and the follower, respectively.
As previously mentioned, the information available to the players plays a pivotal role in determining the solution concept. \cite{yong2002leader} utilises the stochastic maximum principle to derive the open-loop solution for a linear--quadratic Stackelberg game, wherein both players can manipulate the drift and volatility of the state variable. Open-loop solutions have also been examined by researchers such as \cite{oksendal2013stochastic} and \cite{moon2021linear} in jump-diffusion models, and by \cite{shi2016leader} in a linear--quadratic framework characterised by asymmetric information. Concurrently, feedback solutions have been explored through the dynamic programming approach. Notable examples include \cite{he2008cooperative} who study a cooperative advertising and pricing problem, and \cite{bensoussan2014feedback} who investigate an infinite-horizon model. This methodology was further refined by \cite{huang2021verification} to address a finite-horizon problem with volatility control.
\smallskip

Similar to Nash equilibrium concepts, one can also consider so-called \textit{closed-loop} Stackelberg solutions, where the strategies of both players can depend on \textit{the trajectory of the state variable}. However, as noted by \cite{basar1999dynamic} and \cite{simaan1973additional}, closed-loop equilibria are notoriously hard to study, even in simple dynamic games. 
One work in this direction is \cite{bensoussan2015maximum}, which extends the stochastic maximum principle approach to characterise \textit{adapted} closed-loop \textit{memoryless} Stackelberg solutions and, in a linear--quadratic framework, provides a comparison with the open-loop equilibrium. \cite{li2023closed,li2023linear} also discuss `closed-loop solvability' within a linear--quadratic framework, but also restrict to memoryless strategies, and their approach is thus similar to the one developed previously by \cite{bensoussan2015maximum}. Additionally, \cite{li2022closed} investigate closed-loop strategies with one-step memory within a deterministic and discrete-time setting, adding another layer of depth to the understanding of closed-loop equilibria.

\smallskip 
While we defer to \Cref{ss:different_equilibrium} the precise definitions of open-loop, feedback, and closed-loop Stackelberg solutions in a stochastic continuous-time framework, as well as a comparison of these concepts through a simple example, we emphasise that, to the best of our knowledge, there is no literature on stochastic Stackelberg games in which the players' strategies are allowed to depend on the whole trajectory of the output process. 
One goal of this paper is precisely to fill this gap in the literature: we develop an approach that allows us to characterise Stackelberg equilibria with \textit{general} closed-loop strategies, in the sense that both the leader's and follower's strategies can depend on the trajectory of the state variable up to the current time, as opposed to the memoryless strategies considered in \cite{bensoussan2015maximum,li2023closed,li2023linear}.

\paragraph*{Extensions and applications} Before describing our approach and results in more details, one should mention that there are now many extensions and generalisations of the traditional leader--follower game, such as zero-sum solutions, mixed leadership, control of backward SDEs, learning problems, large-scale games, and the mean-field setting, among others.\footnote{See \cite{sun2023zero} for zero-sum games, \cite{bensoussan2019feedback} for mixed leadership,  \cite{zheng2020stackelberg,zheng2021stackelberg} and \cite{feng2022backward} for the case where the controlled state dynamics is given by a backward SDE, \cite{li2023solving} and \cite{zheng2022linear,zheng2022stackelberg} for learning games and \cite{ni2023deterministic} for the study of the time-inconsistency of open-loop solutions. As for larger-scale games, we mention \cite{li2018forward} for the study of repeated Stackelberg games, in which a follower is also the leader of another game, and \cite{kang2022three} for a three-level game. 
	The case of one leader and many followers, originally introduced in a static game by \cite{leitmann1978generalized} and in a stochastic framework by \cite{wang2020asymmetric}, \cite{vasal2022sequential}, has been extended to the mean-field setting 
	in \cite{fu2020mean}, \cite{aid2020mckean}, \cite{si2021backward}, \cite{vasal2022master}, \cite{lv2023linear}, \cite{li2023closed}, \cite{gou2023linear}, \cite{dayanikli2023machine}, \cite{cong2024direct}.}
Lastly, we remark that Stackelberg games cover a wide range of applications, from original economic models, as highlighted by \cite{bagchi1984stackelberg} and \cite{van2010survey}, to operation research and management science, as reviewed by \cite{li2017review} and \cite{dockner2000differential}. Specific applications in these areas include, but are not limited to, marketing channels as in \cite{he2007survey}, cooperative advertising as in \cite{chutani2014feedback}, \cite{he2008cooperative}, insurance as in \cite{havrylenko2022risk}, \cite{han2024optimal}, \cite{guan2024stackelberg}, and energy generation as in \cite{aid2020mckean}.

\paragraph*{A `new' Stackelberg solution concept} In this paper, we consider a stochastic Stackelberg game in continuous time with two players, a leader and a follower, both of whom can control the drift and volatility of the output process $X$, whose dynamics take the general form \eqref{eq:general_dynamic}. Our main theoretical result characterises the Stackelberg equilibrium when the strategies of both players are \emph{closed-loop}, in the sense that their strategies can depend on time and on the path of the output process $X$. More precisely, we allow both players to build strategies whose value at time $t \in [0,T]$ can be a function of time $t$ but more importantly of the trajectory of the process $X$ up to time $t$, denoted $X_{\cdot \wedge t}$. In particular, under this information concept, the players' decisions cannot directly depend on the underlying driving noise. As already emphasised, to our knowledge only the four aforementioned papers \cite{bensoussan2015maximum,li2023closed,li2023linear,li2022closed} study Stackelberg equilibria for strategies falling into the class of `closed-loop'. However, the first three papers focus on the \emph{memoryless case}, in the sense that the admissible strategies at time $t$ do not actually depend on the trajectory of the process up to time $t$, but only on the value of the process at that time, namely $X_t$. The last paper \cite{li2022closed} introduces a notion of memory by allowing the strategy at time $t$ to depend on $X_t$ and $X_{t-1}$, even though in a deterministic and discrete-time framework. The authors nevertheless show that strategies with one-step memory may lead, even in simple frameworks, to different equilibria compared to their memoryless counterparts, which thus provides a first motivation to study a form of `pathwise' (as opposed to memoryless) closed-loop strategies.
\smallskip

Beyond the distinction between `memoryless' and `pathwise' closed-loop strategies, another significant difference of our solution concept comparing to \cite{bensoussan2015maximum,li2023closed,li2023linear} is the \emph{adaptedness} of the admissible strategies. In these three papers, the strategies are assumed to be adapted to the filtration generated by the underlying noise. Informally, it implies that they may also depend on the paths of the Brownian motion driving the output process $X$. While this assumption is necessary to develop a resolution approach based on the stochastic maximum principle, one may question its feasibility in practice. Indeed, in real-world applications, it is debatable whether one actually observes the paths of the underlying noise, which is usually a modelling artefact without any physical reality.\footnote{For a more thorough discussion of this point, which is intimately linked to the question of whether one should adopt the `weak' or `strong' point of view in stochastic optimal control problems, we refer to the illuminating discussion in \cite[Section 9.1.1]{zhang2017backward}.} We thus consider in our framework that admissible closed-loop strategies should instead be adapted with respect to the filtration generated by the output process $X$. This different, albeit natural, concept of information for continuous-time stochastic Stackelberg games actually echoes the definition of closed-loop equilibria in the literature on `classical' stochastic differential games (see, for example \cite[Definition 5.5]{carmona2016lectures} for the case of closed-loop Nash equilibrium, or \cite{possamai2020zero} for zero-sum games). 

\smallskip

It should also be emphasised that the concept of information studied here, simply labelled \emph{closed-loop} for convenience, is therefore different from the so-called `adapted closed-loop' concept introduced (but not studied) by \cite{bensoussan2015maximum}, in which the players' strategies may depend on the whole trajectory of the output process $X$, but are nevertheless adapted with respect to the filtration generated by the underlying Brownian motion. Although it is outside the scope of this paper to study the characterisation of adapted closed-loop solutions for Stackelberg games, our illustrative example suggests that this concept of information may be `too broad'. More precisely, we will see in a simple example that if the leader can design a strategy depending on the trajectories of both the output and the underlying driving noise, then she can actually impose the maximum effort on the follower. 
This observation first echoes similar reductions of information asymmetry under specific settings in so-called \textit{team problems}, as discussed in deterministic continuous-time frameworks of \cite{basar1980teamoptimal} and \cite{papavassilopoulos1980sufficient}, and earlier in the discrete-time setting by \cite{basar1979closedloop}.
This observation also suggests that the difference between `adapted closed-loop' (in the sense of \cite{bensoussan2015maximum}) and what we coined `closed-loop' is akin to the difference between first-best and second-best equilibria defined in the literature on principal--agent problems, which are themselves specific Stackelberg games. 
This parallel is further reinforced by the fact that our solution concept, as well as the solution approach we propose, are in fact strongly inspired by the theory on continuous-time principal-agent problems. 

\paragraph*{Solution approach via stochastic target} The main contribution of our paper is therefore to provide a characterisation of the \emph{closed-loop} equilibrium (in the sense previously discussed) to a general continuous-time stochastic Stackelberg game, in which both players can control the drift and volatility of the output process. Allowing for path-dependent strategies leads to a more sophisticated form of equilibrium which, consequently, is more challenging to solve.
Indeed, in this case, the classical approaches used in the literature to characterise open-loop, or closed-loop memoryless equilibria, such as the maximum principle, can no longer be used. 
The approach we developed in this paper is based on the dynamic programming principle and stochastic target problems: the main idea is to use the follower's value function as a state variable for the leader's problem. 
More precisely, by writing forward the dynamics of the value function of the follower, which by the dynamic programming principle solves a backward SDE, we are able to reformulate the leader's problem as a stochastic control problem of a (forward) SDE system with a stochastic target constraint. 
We remark that the idea of considering the forward dynamics of the value function of the follower in a Stackelberg game, but with a continuum of followers, was used independently in \cite{dayanikli2023machine} to develop a numerical algorithm by means of Lagrange multipliers, \textit{i.e.} when the target constraint is added to leader's objective function as a penalisation term. 
Our approach is different in that we employ the methodology developed in \cite{bouchard2010optimal} and \cite{bouchard2009stochastic}, which leverages the dynamic programming principle for problems with stochastic target constraints established in \cite{soner2002dynamic,soner2002stochastic}, to provide a theoretical characterisation of the closed-loop solution through a system of Hamilton--Jacobi--Bellman (HJB) equations. 

\paragraph*{Overview of the paper} We first introduce in \Cref{sec:example} a simple illustrative example, in order to highlight the various concepts of Stackelberg equilibrium and the different approaches available to solve them. More importantly, we informally explain our approach in \Cref{ss:approach-example} through its application to the example under consideration. The rigorous formulation of the general problem is introduced in \Cref{sec:general_formulation}. In \Cref{sec:stochastic_target}, we reformulate the leader's problem in this general Stackelberg equilibrium as a stochastic control problem with stochastic target constraint, which is then solved in \Cref{sec:solve_leader}. 

{\small\smallskip
	\textit{\small Notations.} We let $\N^\star$ be the set of positive integers, $\R_+ \coloneqq [0,\infty)$ and $\R_+^\star\coloneqq (0,\infty)$. 
	For $(d,n)\in\N^\star\times\N^\star$, $\R^{d\times n}$ denotes the set of $d\times n$ matrices with real entries, while $\S^d$ (resp.  $\S^d_+$) the set of $d\times d$ (resp. positive semi-definite) symmetric matrices with real entries. 
	For any closed convex subset $S \subseteq \R$, we will denote by $\Pi_S(x)$ the Euclidean projection of $x \in \R$ on $S$. 
	For $T>0$ and a finite-dimensional Euclidean space $E$, $\Cc([0,T],\R)$ denotes the space of continuous functions from $[0,T]$ to $\R$,
	and $\Cc^{1,2}([0,T]\times E,\R)$ denotes the subset of $\Cc([0,T]\times E,\R)$ of all continuous functions from $[0,T]\times E$ to $\R$, which are continuously differentiable in time and twice continuously differentiable in space. For every $\varphi\in \Cc^{1,2}([0,T]\times E,\R)$, we denote by $\partial_t \varphi$ its partial derivative with respect to time and by $\partial_{x} \varphi$ and $\partial_{xx}^2 \varphi$ its gradient and Hessian with respect to the space variable, respectively. {We denote by $\Lc^0([0,T],E)$ the set of Borel-measurable maps from $[0,T]$ to $E$}. We agree that the supremum over an empty set is $-\infty$. For a stochastic process $X$, we denote by $\F^X\coloneqq (\Fc^X_t)_{t\geq 0}$ the filtration generated by $X$. {For any filtration $\F=(\Fc_t)_{t\geq 0}$, we denote by $\F^+\coloneqq (\Fc^+_t)_{t\geq 0}$ its right limit.}   }

\section{Illustrative example}\label{sec:example}

As already outlined in the introduction, there exist various concepts of Stackelberg equilibrium. In order to highlight their differences and describe the appropriate methods to compute each of them, we choose to develop in this section a simple illustrative example. 

\smallskip

Let $T > 0$ be a finite time horizon. For the sake of simplicity in this section, we focus on the strong formulation by fixing a probability space $(\Omega, \Fc, \P)$ supporting a one-dimensional Brownian motion $W$. We slightly abuse notations here and denote by $\F^W \coloneqq  (\Fc_t^W)_{t \in [0,T]}$ the natural filtration generated by $W$, $\P$-augmented in order to satisfy the usual hypotheses. We assume that the controlled one-dimensional state process $X$ satisfies the following dynamics
\begin{align}\label{eq:dynamic_x}
	\d X_t = ( \alpha_t + \beta_t) \d t + \sigma \d W_t, \; t \in [0,T],  \; X_0 = x_0 \in \R,
\end{align}
where the pair $(\alpha,\beta)$ represents the players' decisions and $ \sigma \in \R$ is a given constant. More precisely, the leader first announces her strategy $\alpha \in \Ac$ at the beginning of the game, where $\Ac$ is an appropriate family of $A$-valued processes for $A \subseteq \R$. With the knowledge of the leader’s action, the follower chooses an optimal response, \textit{i.e.} a control $\beta \in \Bc$ optimising his objective function, for a given set $\Bc$ of $B$-valued processes for $B\subseteq \R$. The sets $\Ac$ and $\Bc$ will be defined subsequently, as they crucially depend on the solution concept considered.

\smallskip

We assume that, given $\alpha \in \Ac$ chosen by the leader, the follower solves the following optimal stochastic control problem
\begin{align}\label{eq:pb-follower-example}
	V_{\rm F} (\alpha) \coloneqq  \sup_{\beta \in \Bc}\big\{ J_{\rm F} (\alpha, \beta)\big\}, \; \text{with} \; J_{\rm F} (\alpha, \beta) \coloneqq  \E^\P \bigg[ X_T - \dfrac{c_{\rm F}}2  \int_0^T \beta_t^2 \d t \bigg],
\end{align}
for some cost coefficient $c_{\rm F} >0$. Assuming uniqueness here to simplify, the best response of the follower to a control $\alpha \in \Ac$ chosen by the leader is naturally defined by
\begin{align}\label{eq:best-response-follower-example}
	\beta^\star (\alpha) \coloneqq  \argmax_{\beta \in \Bc} \big\{J_{\rm F} (\alpha, \beta)\big\}.
\end{align}

The leader, anticipating the follower’s optimal response $\beta^\star(\alpha)$, chooses $\alpha \in \Ac$ that optimises her own performance criterion. More precisely, we assume here that the leader's optimisation is given by
\begin{align}\label{eq:pb-leader-example}
	V_{\rm L} \coloneqq  \sup_{\alpha \in \Ac} \big\{J_{\rm L} \big(\alpha, \beta^\star (\alpha) \big)\big\}, \; \text{with} \; J_{\rm L} \big(\alpha, \beta^\star (\alpha) \big) \coloneqq  \E^\P \bigg[ X_T - \dfrac{c_{\rm L}}2  \int_0^T \alpha_t^2 \d t \bigg],
\end{align}
for some $c_{\rm L} > 0$, and where the dynamics of $X$ are now driven by the optimal response of the follower, \textit{i.e.}
\begin{align*}
	\d X_t = \big( \alpha_t + \beta^\star_t (\alpha) \big) \d t + \sigma \d W_t, \; t \in [0,T] , \; X_0 = x_0 \in \R.
\end{align*}
The leader’s optimal action and the follower’s rational response, namely the couple $(\alpha^\star, \beta^\star (\alpha^\star))$ for $\alpha^\star$ a maximiser in \eqref{eq:pb-leader-example}, constitute a \textit{global} Stackelberg solution or equilibrium. To ensure that the value of the Stackelberg game is finite for all the various equilibrium concepts, one should require the sets $A$ and $B$ to be bounded. For the sake of simplicity, we assume here that $A \coloneqq [-a_\circ,a_\circ]$ and $B\coloneqq [0,b_\circ]$ for 
$a_\circ > c_{\rm L}^{-1}$ and $b_\circ > c_{\rm F}^{-1}$.\footnote{The latter assumption is only intended to ensure that the `natural' open-loop equilibrium can be reached, see \Cref{sss:AOL}.}

\smallskip

The following section introduces the various notions of equilibrium in continuous-time stochastic Stackelberg games, and compares their solution. More importantly for our purpose, \Cref{ss:approach-example} illustrates our approach, based on dynamic programming and stochastic target problems, allowing to characterise a new notion of Stackelberg equilibrium, which we coin \textit{closed-loop}. Before proceeding, it may be useful to have in mind the optimal---or reference---equilibrium for the leader, \textit{i.e.} when she chooses both strategy directly. This optimal scenario for the leader, which can be labelled \textit{first-best} in reference to its counterpart in principal--agent problems\footnote{Our choice to coin said reformulation as `first-best' is not fortuitous, it is a terminology well-studied in the contract theory literature, see for instance \cite{cvitanic2012contract}, which is one particular instance of a Stackelberg game. Nevertheless, as mentioned in the introduction, this notion of equilibrium also echoes the concept of team solutions in Stackelberg games, as discussed for example in \cite{basar1980teamoptimal}.}, should naturally arise when the leader can deduce the follower's strategy from her observation, and is able to \textit{strongly} penalise him whenever he deviates from the optimal strategy she recommended. The value of the leader in this \textit{first-best} problem is naturally defined by 
\begin{align}\label{eq:pb_first-best}
	V_{\rm L}^{\rm FB} \coloneqq \sup_{(\alpha,\beta)\in\Ac\times\Bc} \big\{J_{\rm L}(\alpha,\beta)\big\},
\end{align}
where here, $\Ac$ and $\Bc$ are the sets of {$\F^W$}-adapted processes taking values in $A$ and $B$, respectively.
This corresponds to a simple stochastic control problem, whose solution is provided in the following lemma. Its proof is very straightforward using standard HJB techniques or even by pointwise optimisation, and therefore omitted here.
\begin{lemma}[First-best solution]\label{lem:first-best}
	The optimal efforts in the first-best scenario are given by $\alpha^{\rm FB}_t = c_{\rm L}^{-1}$ and $\beta_t^{\rm FB} = b_\circ$ for all $t \in [0,T]$, which induce the following values for the leader and the follower, respectively
	\begin{align*}
		V_{\rm L}^{\rm FB} &= x_0 + \bigg(\dfrac{1}{2 c_{\rm L}} + b_\circ \bigg)T, \quad
		V_{\rm F}^{\rm FB} \coloneqq J_{\rm F} \big(\alpha^{\rm FB},\beta^{\rm FB} \big)
		= x_0 + \bigg( \dfrac{1}{c_{\rm L}} + b_\circ - \dfrac12 c_{\rm F} b^2_\circ \bigg)T.
	\end{align*}
\end{lemma}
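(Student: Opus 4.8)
The plan is to solve the joint optimisation problem \eqref{eq:pb_first-best} by decoupling the contributions of $\alpha$ and $\beta$ to the leader's objective and optimising each pointwise. First I would note that, since both $\alpha$ and $\beta$ enter the dynamics of $X$ only through the drift and the penalisations are separable, we may write, using the dynamics \eqref{eq:dynamic_x} and the fact that $\E^\P[\int_0^T \sigma \, \d W_t] = 0$ for bounded (hence square-integrable) integrands,
\begin{align*}
	J_{\rm L}(\alpha,\beta) = \E^\P\bigg[ x_0 + \int_0^T (\alpha_t + \beta_t)\,\d t - \dfrac{c_{\rm L}}{2}\int_0^T \alpha_t^2\,\d t\bigg] = x_0 + \E^\P\bigg[\int_0^T \Big( \alpha_t - \dfrac{c_{\rm L}}{2}\alpha_t^2 + \beta_t\Big)\d t\bigg].
\end{align*}
The objective is thus the sum of a term depending only on $\alpha$ and a term depending only on $\beta$, so the supremum over $\Ac\times\Bc$ splits into two independent pointwise maximisations inside the integral.

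Next I would carry out the two one-dimensional optimisations. For the $\beta$-term, the integrand $\beta \mapsto \beta$ is increasing, so the maximiser over $B = [0,b_\circ]$ is $\beta^{\rm FB}_t = b_\circ$. For the $\alpha$-term, the map $\alpha \mapsto \alpha - \tfrac{c_{\rm L}}{2}\alpha^2$ is a downward parabola with unconstrained maximiser $c_{\rm L}^{-1}$; since by assumption $a_\circ > c_{\rm L}^{-1}$, this point lies in the interior of $A = [-a_\circ,a_\circ]$, so $\alpha^{\rm FB}_t = c_{\rm L}^{-1}$. These choices are constant (in particular deterministic and $\F^W$-adapted, hence admissible), and plugging them back gives the integrand value $c_{\rm L}^{-1} - \tfrac{c_{\rm L}}{2}c_{\rm L}^{-2} + b_\circ = \tfrac{1}{2c_{\rm L}} + b_\circ$, whence $V_{\rm L}^{\rm FB} = x_0 + \big(\tfrac{1}{2c_{\rm L}} + b_\circ\big)T$.

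Finally I would compute the follower's value at this pair by the same martingale argument applied to $J_{\rm F}$: $V_{\rm F}^{\rm FB} = J_{\rm F}(\alpha^{\rm FB},\beta^{\rm FB}) = x_0 + \E^\P\big[\int_0^T (\alpha^{\rm FB}_t + \beta^{\rm FB}_t)\,\d t - \tfrac{c_{\rm F}}{2}\int_0^T (\beta^{\rm FB}_t)^2\,\d t\big] = x_0 + \big(c_{\rm L}^{-1} + b_\circ - \tfrac{1}{2}c_{\rm F} b_\circ^2\big)T$, which is the claimed expression. There is essentially no obstacle here: the only points requiring a word of care are the justification that the stochastic integral has zero expectation (immediate since $\sigma$ is constant and the horizon finite) and the verification that the unconstrained optimiser $c_{\rm L}^{-1}$ indeed falls inside $A$, which is exactly what the standing assumption $a_\circ > c_{\rm L}^{-1}$ guarantees. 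This is why the paper rightly calls the proof straightforward and omits it; the argument above is the pointwise-optimisation route alluded to in the text, and the HJB route would reach the same conclusion via the ansatz $v(t,x) = x + \kappa(T-t)$ with $\kappa = \tfrac{1}{2c_{\rm L}} + b_\circ$.
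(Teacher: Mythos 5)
Your proof is correct and follows exactly the pointwise-optimisation route that the paper explicitly invokes when it omits the proof as straightforward; the decomposition of $J_{\rm L}$ into separable $\alpha$- and $\beta$-terms, the check that $c_{\rm L}^{-1}\in A$ via $a_\circ > c_{\rm L}^{-1}$, and the final evaluations all match the stated values. Nothing further is needed.
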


\subsection{Various Stackelberg equilibria}\label{ss:different_equilibrium}

There exist various notions of equilibrium in a continuous-time stochastic Stackelberg game. These concepts are related to the information available to both players, the leader and the follower, at the beginning and during the game. Following the nomenclature in \cite{basar1999dynamic} for dynamic Stackelberg games, and extended to the stochastic version in \cite{bensoussan2015maximum}, we {\emph{informally}\footnote{The definition of the information available to both players is rather informal here, to adhere to the concepts introduced in \cite{bensoussan2015maximum}. More rigorously, it could be defined as the filtration generated by the processes observable by both players. Nonetheless, we will define in a rigorous way the sets $\Ac$ and $\Bc$ of admissible efforts depending on the solution concept considered.}} define by $\Ic_t$ the information available to both players at time $t \in [0,T]$ and distinguish four cases
\begin{enumerate}[label=$(\roman*)$]
	\item \emph{adapted open-loop} (AOL) when $\Ic_t = \{x_0, W_{\cdot \wedge t}\}$;
	\item \emph{adapted feedback} (AF) when $\Ic_t = \{X_t, W_{\cdot \wedge t} \}$;
	\item \emph{adapted closed-loop memoryless} (ACLM) when $\Ic_t = \{x_0, X_t, W_{\cdot \wedge t}\}$;
	\item \emph{adapted closed-loop} (ACL) when $\Ic_t = \{X_{\cdot \wedge t}, W_{\cdot \wedge t}\}$.
\end{enumerate}
As explained in \cite{bensoussan2015maximum}, the information structures $(i), (iii)$ and $(iv)$ lead to the concept of global Stackelberg solutions, where the leader actually dominates the follower over the entire duration of the game. In these situations, a Stackelberg equilibrium $(\alpha^\star, \beta^\star(\alpha^\star))$ is characterised as in the illustrative example above
\begin{align*}
	J_{\rm F}  (\alpha, \beta^\star (\alpha) ) 
	\geq  J_{\rm F} (\alpha, \beta), \; \text{and} \; J_{\rm L}  (\alpha^\star, \beta^\star (\alpha^\star)  ) \geq  J_{\rm L}   (\alpha, \beta^\star (\alpha)  ), \;   \forall  (\alpha,\beta) \in \Ac \times \Bc.
\end{align*}
The information structure $(ii)$ leads to a different concept of solution in which the leader has only an instantaneous advantage over the follower. More precisely, a \textit{feedback} Stackelberg equilibrium $(\alpha^\star, \beta^\star(\alpha^\star))$ should satisfy
\begin{align*}
	J_{\rm F}  (\alpha^\star, \beta^\star (\alpha^\star) ) \geq  J_{\rm F} (\alpha^\star, \beta), \; \text{and} \;
	J_{\rm L}  (\alpha^\star, \beta^\star (\alpha^\star)  ) \geq  J_{\rm L}   (\alpha, \beta^\star (\alpha) ), \; \forall  (\alpha,\beta) \in \Ac \times \Bc.
\end{align*}
In the following, we illustrate the existing approaches to computing the equilibrium in the first three information structures in the context of the above example. Even though the last information structure, corresponding to the \textit{adapted} closed-loop (with memory) case, has not been studied in the literature, we are able to characterise it in this example. Indeed, our analysis established a connection between this Stackelberg solution concept and the first-best scenario, already discussed in \Cref{lem:first-best}.

\smallskip

However, the real aim of this paper is not to study existing solution concepts, but to introduce a new, albeit natural, concept of information, corresponding to the definition of closed-loop equilibrium in the literature on stochastic differential games (see, for example, \cite[Definition 5.5]{carmona2016lectures}), in which the information available to both players at time $t \in [0,T]$ is---again informally---defined as
\begin{enumerate}[resume,label=$(\roman*)$]
	\item \emph{closed-loop} (CL) when $\Ic_t = \{X_{\cdot \wedge t}\}$.
\end{enumerate}
In particular, this information concept is different from the \emph{adapted closed-loop} case introduced in \cite{bensoussan2015maximum} and mentioned above, as we do not assume here that the players have access to the paths of the Brownian motion. As already highlighted in the introduction, considering such an information structure makes sense, especially in real-world applications, as it usually seems unrealistic to believe that players can actually observe the underlying noise driving the output process, the latter being in most cases a modelling artefact. Admissible strategies constructed using this information structure are therefore not assumed to be adapted to the natural filtration generated by the Brownian motion, in contrast to \emph{adapted closed-loop} strategies, hence we simply refer to them as \emph{closed-loop}. 

\smallskip
More precise specifications of this solution concept, along with an informal description of the methodology we develop to characterise the corresponding Stackelberg equilibrium, are presented separately in \Cref{ss:approach-example}. We present below the main results obtained in the context of the example, especially the comparison of the values obtained for both players, depending on the equilibrium considered.

\paragraph*{Comparison of the equilibria} The results we obtain for the different solution concepts are summarised in \Cref{tab:comparison} below. Before commenting on our results, we should point out that these findings were obtained for the example introduced at the beginning of this section, and by no means do we claim or expect that they would all be true in a more general context. Nevertheless, given the significance of some of these findings, especially the fact that, from the leader's value point of view\footnote{From the follower's point of view, all the inequalities are naturally reversed.}, {$V_{\rm L}^{\rm AOL}= V_{\rm L}^{\rm AF}< V_{\rm L}^{\rm ACLM-\bar{K}} < V_{\rm L}^{\rm CL} < V_{\rm L}^{\rm ACL} = V_{\rm L}^{\rm FB}$}, investigating the extent to which they hold in greater generality could be the subject of future research. Note here that the value $ V_{\rm L}^{\rm ACLM-\bar{K}}$ corresponds to a restricted version of the ACLM problem introduced above, whose precise formulation can be found in \Cref{sss:ACLM}. Its introduction is necessary, as we will point out that we were not successful in getting an explicit formula in this case for the general problem.

\begin{table}[!ht] 
	\centering
	\renewcommand{\arraystretch}{1.4}
	\begin{tabular}{|c|c|c|c|c|} 
		\hline
		& \textbf{AOL} and {\bf AF} & {\bf ACLM-$\bar{K}$}  & {\bf ACL} and {\bf FB}   \\   \hline
		\rule{0pt}{1.4\normalbaselineskip} $V_{\rm L}$ & $ x_0 + \bigg( \dfrac{1}{2c_{\rm L}} + \dfrac{1}{c_{\rm F}} \bigg) T$   &   \( x_0 + \bigg(\dfrac{1}{2c_{\rm L}} + b_\circ \bigg) T - \Delta_{\rm L} \) &  $x_0 + \bigg( \dfrac{1}{2c_{\rm L}} + b_\circ \bigg) T$  \\[0.7em] \hline    
		\rule{0pt}{1.4\normalbaselineskip} $V_{\rm F}$ & $ x_0 + \bigg( \dfrac{1}{c_{\rm L}} + \dfrac{1}{2c_{\rm F}} \bigg) T $  & $ x_0  + \bigg( \dfrac{1}{c_{\rm L}} + b_\circ - \dfrac12 c_{\rm F} b^2_\circ \bigg) T - \Delta_{\rm F} $ & $x_0 + \bigg( \dfrac{1}{c_{\rm L}} + b_\circ - \dfrac12 c_{\rm F} b^2_\circ \bigg)T$  \\[0.7em] \hline
	\end{tabular}
	\caption{\small Comparison of the various Stackelberg equilibria}\label{tab:comparison}
	\footnotesize{{\color{black}In the ACLM-$\bar K$ case, $\Delta_{\rm L} \coloneqq  \frac{1}{\bar K} \big(b_\circ \log(b_\circ c_{\rm F}) + b_\circ - \frac{1}{c_{\smallfont{\rm F}}} \big)$, and $\Delta_{\rm F} \coloneqq \frac{1}{\bar K} \big( \log(b_\circ  c_{\smallfont{\rm F}}) - \frac{(b_\circ c_{\smallfont{\rm F}} - 1)}{c_{\smallfont{\rm F}} } + \frac{(b_\circ^2 c_{\smallfont{\rm F}}^2 - 1)}{4 c_{\smallfont{\rm F}}} \big) $, for $\bar K=\frac{1}{T} \log\big( \frac{1}{b_\circ} \big( a_\circ + \frac{1}{c_{\smallfont{\rm L}}} +\frac{1}{2c_{\smallfont{\rm F}}} \big) + \frac12 b_\circ c_{\smallfont{\rm F}} \big)$.}}
\end{table}

First of all, it is obviously expected that, for any concept of Stackelberg equilibrium, the value of the leader will be lower than her value in the first-best case, introduced as a reference in \Cref{lem:first-best}, since in this scenario the leader can directly choose the optimal effort of the follower. It is also expected that the more available information the leader can use to implement her strategy, the higher the value she will obtain, which translates mathematically into the following inequalities
\begin{align}\label{eq:value_info}
	V_{\rm L}^{\rm AOL} \leq V_{\rm L}^{\rm ACLM} \leq V_{\rm L}^{\rm ACL}, \; 
	V_{\rm L}^{\rm AF} \leq V_{\rm L}^{\rm ACLM}, \; \text{and} \;
	V_{\rm L}^{\rm CL} \leq V_{\rm L}^{\rm ACL}.
\end{align}
In the context of our simple example, our first finding is that the Stackelberg equilibrium, and hence the associated values for the leader and the follower, coincide for both the adapted open-loop (\Cref{sss:AOL}) and the adapted feedback (\Cref{sss:AF}) information structures. This might reflect how the additional information under the feedback structure is counterbalanced by the global dominance of the open-loop strategies. Regarding the value of the leader in the ACLM-$\bar{K}$ information structure (\Cref{sss:ACLM}), strict inequalities with respect to the values in the AOL and ACL cases can be obtained for specific choices of the model parameters. Namely, we assume in \Cref{lemma-ACLM} that 
	\begin{align}\label{cdtn:a_0_ACLM}
		a_\circ \geq \frac{1}{c_{\rm L}} + \frac{(b_\circ c_{\rm F} -1)^2}{2c_{\rm F}},
	\end{align}
	in order to compute explicitly the value of the leader.
On the other hand, our analysis of the Stackelberg game under adapted closed-loop strategies in \Cref{sss:ACL} shows that as long as the leader can effectively punish the follower at no additional cost, see \Cref{eq.punishment}, then the problem degenerates to the first-best case. More precisely, by observing the trajectory of $X$ as well as that of $W$, the leader can actually deduce the follower's effort at each time, and thus force him to perform the maximum effort $b_\circ$, threatening to significantly penalise him otherwise. This is the case, for instance, if 
\begin{align}\label{cdtn:a_0_ACL}
	a_\circ \geq \dfrac{1}{2c_{\rm F}} - b_\circ + \dfrac12 c_{\rm F} b_\circ^2 - \dfrac{1}{c_{\rm L}}.
\end{align}
Finally, regarding our closed-loop equilibrium, while it is already clear that the value for the leader should be lower than in the ACL and FB cases, we can also argue that it is higher than in the AOL (and thus AF) case (see \Cref{sss:numerics}).
However, the comparison with the ACLM case is less straightforward. Unfortunately, we are not able to obtain explicit results in this framework, even in the context of this simple example, and we thus rely on numerical results, also presented in \Cref{sss:numerics}. These numerical results seem to illustrate that the CL equilibrium gives a higher value for the leader compare to the ACLM-$\bar{K}$ case, at least when $a_\circ$ is chosen sufficiently large so that \Cref{cdtn:a_0_ACLM} and \Cref{cdtn:a_0_ACL} are satisfied. Although we cannot rule out the possibility that these conclusions could be reversed for different sets of parameters, the numerical results nevertheless highlight that these two equilibria are essentially different.

\subsubsection{Adapted open-loop strategies}\label{sss:AOL}

In a Stackelberg game under the adapted open-loop (AOL) information structure, both players have access to the initial value of $X$, namely $x_0$, and the trajectory of the Brownian motion $W$. Since the leader first announces her strategy $\alpha$, its value $\alpha_t$ at a any time $t \in [0,T]$ should only depend on the realisation of the Brownian motion on $[0,t]$, and on the initial value $x_0$ of the state. 
The leader's strategy space $\Ac$ in this case is thus naturally defined by
\[
\Ac \coloneqq   \big\{   \alpha: [0,T]\times\Omega\ni(t,\omega)\longmapsto a(t,W_{\cdot \wedge t}(\omega), x_0), a:[0,T]\times \Cc([0,T],\R)\times\R \longrightarrow A \, \text{Borel}\big\}.
\]
As the follower makes his decision after the leader announces her whole strategy $\alpha$ on $[0,T]$, his strategy may also depend on the leader's announced strategy. More precisely, the value $\beta_t$ of the follower’s response strategy at time $t \in [0,T]$ is naturally measurable with respect to $\Fc_t^W$, but can also depend on the leader’s strategy $\alpha$. His response strategy space is thus defined by\footnote{Here and in the following, we consider in $\Lc^0([0,T],A)$ the topology induced by the Dunford--Schwartz pseudo-metric $\rho(f,g)=\inf_{c>0} \arctan \{c+\mu(\{ |f-g|>c\}) \}$, with $\mu$ the Lebesgue measure.}
\begin{align*}
	\Bc \coloneqq  \big\{ \beta &:[0,T]\times\Omega\ni(t,\omega) \longmapsto  b(t,W_{\cdot\wedge t}(\omega),x_0,\alpha_{\cdot\wedge t}(\omega)): \\
	b &:[0,T]\times \Cc([0,T],\R)\times\R\times\Lc^0([0,T],A) \longrightarrow B,\; \text{Borel} \big\}.
\end{align*}
Note that, at any time $t \in  [0,T]$, since the information available to the leader is also available to the follower, the follower can naturally compute the value of the leader's strategy at that instant $t$, \textit{i.e.} $\alpha_t(\omega)$. However, he cannot anticipate the future values of the leader's strategy $\alpha$.

\smallskip

As described in \cite[Section 3]{bensoussan2015maximum}, one way to characterise a global Stackelberg equilibrium under the AOL information structure is to rely on the maximum principle. 
A general result is given, for example, in \cite[Proposition 3.1]{bensoussan2015maximum}, but we briefly describe this approach through its application to our example. 
Recall that, given the leader's strategy $\alpha \in \Ac$, the follower's problem is defined by \Cref{eq:pb-follower-example}, where the dynamics of the state variable $X$ satisfies \eqref{eq:dynamic_x}. To solve this stochastic optimal control problem through the maximum principle, we first define the appropriate Hamiltonian
\begin{align*}
	h^{\rm F} (t,a,y, z, b) 
	\coloneqq  (a+b) y + \sigma z - \dfrac{c_{\rm F}}2 b^2, \; (t,a,y, z, b)  \in [0,T] \times A \times \R^2\times B.
\end{align*}
Suppose now that there exists a solution $\beta^\star(\alpha)$ to the follower's problem \eqref{eq:pb-follower-example} for any $\alpha \in \Ac$. Then, the maximum principle states that there exists a pair of real-valued, $\F^W$-adapted processes $(Y^{\rm F},Z^{\rm F})$ such that
\begin{align}\label{eq:MP-open-loop-example}
	\begin{cases}
		\displaystyle \d X_t = \big( \alpha_t + \beta^\star_t(\alpha) \big) \d t + \sigma \d W_t, \; t \in [0,T], \; X_0 = x_0; \\[0.5em]
		\displaystyle \d Y_t^{\rm F} = Z_t^{\rm F} \d W_t, \; t \in [0,T], \; Y^{\rm F}_T = 1; \\[0.5em]
		\displaystyle \beta^\star_t(\alpha) \coloneqq  \argmax_{b \in B} \big\{h^{\rm F} \big( t,\alpha_t,Y_t^{\rm F}, Z_t^{\rm F}, b \big)\big\}, \; \mathrm{d}t\otimes\P\text{\rm--a.e.}
	\end{cases}
\end{align}
Note that the drift in the backward SDE (BSDE for short) in \eqref{eq:MP-open-loop-example}, commonly called adjoint process, is equal to $0$ because the Hamiltonian $h^{\rm F}$ does not depend on the state variable. Clearly, in this simple example, the pair $(Y^{\rm F},Z^{\rm F})$ satisfying the BSDE is the pair of constant processes $(1,0)$. This leads to the optimal constant control $\beta^\star_t(\alpha) = 1/c_{\rm F} \in B$ for all $t \in [0,T]$. In particular, this control is independent of the leader's choice of $\alpha$. The leader's problem defined by \eqref{eq:pb-leader-example} thus becomes
\begin{align*}
	V_{\rm L}=\sup_{\alpha \in \Ac} \bigg\{\E^\P \bigg[ X_T - \dfrac{c_{\rm L}}2  \int_0^T \alpha_t^2 \d t \bigg]\bigg\}, \;
	\text{subject}\; \text{to} \; \d X_t = \bigg( \alpha_t + \dfrac{1}{c_{\rm F}} \bigg) \d t + \sigma \d W_t, \; t \in [0,T].
\end{align*}
This optimal control problem is trivial to solve, and also leads to an optimal constant control for the leader, namely $\alpha^\star_t = 1/c_{\rm L} \in A$ for all $t \in [0,T]$. The open-loop equilibrium is thus given by $(1/c_{\rm L}, 1/c_{\rm F})$, which is admissible thanks to the assumptions $a_\circ \geq 1/c_{\rm L}$ and $b_\circ \geq 1/c_{\rm F}$, and one can easily compute the corresponding values for the leader and the follower, given in \Cref{tab:comparison}.

\subsubsection{Adapted feedback strategies}\label{sss:AF}

A Stackelberg game under the adapted feedback (AF) information structure differs from the other Stackelberg equilibrium, not only in the information structure itself, but also in the way the game is played. In this scenario, both players only have access to the current value of $X$ and the trajectory of the Brownian motion $W$. In other words, the leader's strategy at time $t \in [0,T]$ can only depend on the value $X_t$ and the realisation of the Brownian motion on $[0,t]$. Therefore, the leader’s and follower’s strategy spaces are respectively defined by
\begin{align*}
	\begin{split}
		\Ac \coloneqq   \big\{  \alpha &: [0,T]\times\Omega\ni(t,\omega)\longmapsto  a(t,W_{\cdot \wedge t}(\omega), X_t(\omega)): \\ 
		a &:[0,T]\times \Cc([0,T],\R)\times\R \longrightarrow A,\; \text{Borel}\big\}, \\
		\Bc \coloneqq  \big\{ \beta &: [0,T]\times\Omega\ni(t,\omega)\longmapsto b(t,W_{\cdot\wedge t}(\omega),X_t(\omega),\alpha_t(\omega)): \\ 
		b &:[0,T]\times \Cc([0,T],\R)\times\R\times A \longrightarrow B,\; \text{Borel} \big\}.
	\end{split}
\end{align*}
Under this information structure, the equilibrium is not \textit{global}, in the sense that at each time $t \in [0,T]$, the leader first decides her action $\alpha_t$, and then the follower makes his decision, immediately after observing the leader’s instant action at time $t$, rather than her whole strategy over $[0,T]$.
Mathematically speaking, an AF Stackelberg solution is a pair $(\alpha^\star, \beta^\star (\alpha^\star) ) \in \Ac \times \Bc$ satisfying $J_{\rm F} (\alpha^\star, \beta^\star (\alpha^\star)  )\geq  J_{\rm F} (\alpha^\star, \beta)$, $ \forall  \beta \in \Bc$, $J_{\rm L} (\alpha^\star, \beta^\star (\alpha^\star) ) \geq  J_{\rm L}  (\alpha, \beta^\star (\alpha) )$, $\forall \alpha \in \Ac$.
As mentioned in \cite[p.~1960]{bensoussan2015maximum}, in a Markovian setting like the example under consideration, one can find so-called `deterministic menus', in the sense that the functions \( a \) and \( b \) defining the strategy spaces \( \Ac \) and \( \Bc \) above can be taken independent of the Brownian motion. 
	
\smallskip

The above remark justifies, in the particular setting of the example, the use of a dynamic programming approach to compute the equilibrium. We therefore follow the method developed in \cite{bensoussan2014feedback}, which relies on the characterisation of the Stackelberg equilibrium via a system of coupled HJB equations. More precisely, we first introduce the players' Hamiltonians
\begin{align*}
	h^{\rm F} \big(t,z^{\rm F},a, b \big) 
	\coloneqq  (a+b) z^{\rm F} - \dfrac{c_{\rm F}}2 b^2, \;\text{and}\; 
	h^{\rm L} \big(t,z^{\rm L},a, b \big)
	\coloneqq  (a+b) z^{\rm L} - \dfrac{c_{\rm L}}2 a^2,
\end{align*}
for $(t,z^{\rm F},z^{\rm L}, a,b)  \in [0,T] \times \R^2 \times A \times B$.
For a fixed action of the leader, the follower's optimal response is given by the maximiser of his Hamiltonian, \textit{i.e.}
\begin{align*}
	b^\star \big(t,z^{\rm F},a \big) \coloneqq  \argmax_{b \in B} \big\{h^{\rm F} \big(t,z^{\rm F},a, b\big)\big\} = \Pi_B \bigg( \dfrac{z^{\rm F}}{c_{\rm F}} \bigg), \; (t,z^{\rm F}, a )  \in [0,T] \times \R \times A,
\end{align*}
recalling that $\Pi_{B}(x)$ denotes the projection of $x \in \R$ on $B$.
One should then replace this optimal response into the leader's Hamiltonian. Nevertheless, in this example, it does not change the functional maximising the leader's Hamiltonian, given by
\begin{align*}
	a^\star \big(t,z^{\rm F},z^{\rm L} \big) \coloneqq  \argmax_{a \in A} \big\{h^{\rm L} \big(t,z^{\rm L},a, b^\star (t,z^{\rm F},a ) \big)\big\} = \Pi_A \bigg( \dfrac{z^{\rm L}}{c_{\rm L}} \bigg), \; (t,z^{\rm F}, z^{\rm L} )  \in [0,T] \times \R^2.
\end{align*}
To compute the equilibrium, one must solve the following system of coupled Hamilton--Jacobi--Bellman equations
\begin{align*}
	\begin{cases}
		- \partial_t v_{\rm F} - \bigg(\Pi_A \bigg( \dfrac{\partial_{x} v_{\rm L}}{c_{\rm L}} \bigg) + \Pi_B \bigg( \dfrac{\partial_{x} v_{\rm F}}{c_{\rm F}} \bigg) \bigg) \partial_{x} v_{\rm F} + \dfrac{c_{\rm F}}2 \Pi_B^2 \bigg( \dfrac{\partial_{x} v_{\rm F}}{c_{\rm F}} \bigg) - \dfrac12 \sigma^2 \partial_{xx} v_{\rm F} = 0, \\[0.8em]
		- \partial_t v_{\rm L} - \bigg(\Pi_A \bigg( \dfrac{\partial_{x} v_{\rm L}}{c_{\rm L}} \bigg) + \Pi_B \bigg( \dfrac{\partial_{x} v_{\rm F}}{c_{\rm F}} \bigg) \bigg) \partial_{x} v_{\rm L} + \dfrac{c_{\rm L}}2 \Pi_A^2 \bigg( \dfrac{\partial_{x} v_{\rm L}}{c_{\rm L}} \bigg) - \dfrac12 \sigma^2 \partial_{xx} v_{\rm L} = 0,
	\end{cases}
\end{align*}
on $[0,T) \times \R$, with boundary conditions $v_{\rm F} (T,x) = v_{\rm L} (T,x) = x$, $x \in \R$. One can check using a standard verification theorem that the appropriate solutions to the previous system for $(t,x) \in [0,T] \times \R$ are 
\begin{align*}
	v_{\rm F} (t,x) = x + \bigg( \dfrac{1}{c_{\rm L}} 
	+ \dfrac1{2 c_{\rm F}} \bigg) (T-t), \; \text{and} \;
	v_{\rm L} (t,x) = x + \bigg( \dfrac1{c_{\rm F}} + \dfrac{1}{2 c_{\rm L}} \bigg) (T-t),
\end{align*}
which correspond to the constant strategies $(1/c_{\rm L}, 1/c_{\rm F}) \in A \times B$. In particular, the feedback Stackelberg equilibrium coincides with the open-loop solution computed before, both in terms of strategy and corresponding value.

\subsubsection{Adapted closed-loop memoryless strategies} \label{sss:ACLM}

If the information structure is assumed to be adapted closed-loop memoryless (ACLM), then both players have access to the initial and current value of $X$, as well as the trajectory of the Brownian motion $W$. This means that both players can make the values of their decisions at time $t$ contingent on additionally the current state information $X_t$, when compared to the AOL information structure case. Then, the leader’s strategy space and the follower’s response strategy space are naturally defined by 
\begin{align*}
	\Ac \coloneqq   \big \{  \alpha &: [0,T]\times\Omega\ni(t,\omega)\longmapsto a(t,W_{\cdot \wedge t}(\omega), X_t(\omega),x_0): \\  a &:[0,T]\times \Cc([0,T],\R)\times\R^2 \longrightarrow A,\;\text{Borel}\big\}, \\
	\Bc \coloneqq  \big\{  \beta &: [0,T]\times\Omega\ni(t,\omega)\longmapsto b(t,W_{\cdot\wedge t}(\omega),X_t(\omega),x_0,\alpha_{\cdot\wedge t}(\omega)):\\
	b &:[0,T]\times \Cc([0,T],\R)\times\R^2\times \Lc^0([0,T],A)) \longrightarrow B,\; \text{Borel} \big\}.
\end{align*}
As mentioned above, the main difference between the ACLM and the AOL information structures is that the leader's control at time $t$ can now depend on the value of the state at that time. However, by choosing his strategy $\beta$, the follower will naturally impact the dynamic of the state $X$ and thus its value, which in turn impacts the value of the leader's control $\alpha$. Therefore, in order to compute his optimal response to a strategy $\alpha$ of the leader, the follower needs to take into account the retroaction of his control on the value of the leader's control, which thus leads to a more sophisticated form of equilibrium. In particular, contrary to the AOL case where the leader is relatively myopic, in the sense that she cannot possibly take into account the choice of the follower, she can now design a strategy indexed on the state that will therefore take into account the follower's actions.

\smallskip
In order to characterise the global Stackelberg equilibrium under the ACLM information structure, we can again rely on the maximum principle (see \cite[Section 4]{bensoussan2015maximum}). 
First, to highlight the dependency of the value $\alpha_t$ on the current value of the state $X_t$, we write $\alpha_t \eqqcolon  a_t(X_t)$ for $a:[0,T]\times \Omega\times \R\times \{x_0\}\longrightarrow A$, whose values at a fixed $(t,\omega)\in [0,T]\times \Omega$ induces the family ${\rm A}$ of mappings ${\rm a}: \R\times \{x_0\}\longrightarrow A$. We can then follow the maximum principle approach as before, but taking into account this dependency. More precisely, as before, we fix the leader's strategy $\alpha \in \Ac$ and thus its value $a_t(X_t)$ at time $t$, and consider the follower's problem given by \eqref{eq:pb-follower-example}, but now subject to the following dynamics
\begin{align*}
	\d X_t = ( a_t(X_t) + \beta_t) \d t + \sigma \d W_t, \; t \in [0,T], \; X_0 = x_0,
\end{align*}
where the dependency of the leader's control on the state appears explicitly. This dependency will thus also appear similarly in the follower's Hamiltonian
\begin{align*}
	h^{\rm F} (t,{\rm a},x,y, z, b) 
	\coloneqq  ({\rm a}(x) +b) y + \sigma z - \dfrac{c_{\rm F}}2 b^2, \; (t,{\rm a},x,y, z, b)  \in [0,T] \times {\rm A} \times \R^3\times B.
\end{align*}
Suppose there exists a solution $\beta^\star(\alpha)$ to the follower's problem \eqref{eq:pb-follower-example} for any $\alpha \in \Ac$, then the maximum principle states that there exists a pair of $\F^W$-adapted processes $(Y^{\rm F},Z^{\rm F})$ satisfying the forward--backward SDE (FBSDE for short)
\begin{align*}
	\begin{cases}
		\displaystyle \d X_t = \big( a_t(X_t) + \beta^\star_t(\alpha) \big) \d t + \sigma \d W_t, \; t \in [0,T], \; X_0 = x_0, \\[0.5em]
		\displaystyle \d Y_t^{\rm F} = - \partial_x h^{\rm F} \big( t,\alpha_t,X_t,Y_t^{\rm F}, Z_t^{\rm F}, \beta^\star_t(\alpha) \big) \d t + Z_t^{\rm F} \d W_t, \; t \in [0,T], \; Y^{\rm F}_T = 1, \\[0.5em]
		\displaystyle \beta^\star_t(\alpha) \coloneqq  \argmax_{b \in B} \big\{h^{\rm F} \big( t,\alpha_t,X_t,Y_t^{\rm F}, Z_t^{\rm F}, b \big)\big\}, \; t \in[0,T].
	\end{cases}
\end{align*}
Notice that $h^{\rm F}$ now depends explicitly on the state variable, and thus the associated partial derivative is not equal to zero, contrary to the AOL case. By computing the maximiser of $h^{\rm F}$ over $b \in B$, the previous FBSDE system becomes
\begin{align}\label{eq:MP-ACLM-example}
	\begin{cases}
		\displaystyle	\d X_t = \bigg( a_t(X_t) + \Pi_B \bigg( \dfrac{Y_t^{\rm F}}{c_{\rm F}} \bigg) \bigg) \d t + \sigma \d W_t, \; t \in [0,T], \; X_0 = x_0, \\[0.8em]
		\displaystyle	\d Y_t^{\rm F} = - \partial_x a_t(X_t) Y_t^{\rm F} \d t + Z_t^{\rm F} \d W_t, \; t \in [0,T], \; Y^{\rm F}_T = 1.
	\end{cases}
\end{align}
One can then reformulate the leader's problem defined by \eqref{eq:pb-leader-example} as a stochastic control problem of an FBSDE system
\begin{align}\label{eq:leader-reformulated-ACLM-example}
	V_{\rm L} = \sup_{\alpha \in \Ac} \bigg\{\E^\P \bigg[ X_T - \dfrac{c_{\rm L}}2  \int_0^T \alpha_t^2 \d t \bigg]\bigg\}, \; \text{subject to the dynamics in \eqref{eq:MP-ACLM-example}}.
\end{align}

The presence of the derivative $\partial_x a$ of the leader's strategy in \eqref{eq:MP-ACLM-example} results in a non-standard optimal control problem for the leader, but this problem can nevertheless also be solved via the maximum principle, as described in \cite[Section 4]{bensoussan2015maximum}. More precisely, the idea to solve the leader's problem is to look at efforts of the form $a_t(X_t)=a^{2}_t X_t + a^{1}_t$, where $a^{1}$ and $a^{2}$ are $\F^W$-adapted, $\R$-valued processes such that $a^{2}_t X_t + a^{1}_t\in A$ for every $t\in[0,T]$, $\P$--a.s. We define $\Ac^2$ as the space of processes $(a^1,a^2)$ satisfying these properties. It then follows from \cite[Theorem 4.1]{bensoussan2015maximum} that $V_{\rm L}=\widetilde V_{\rm L}$, where
\begin{align}\label{eq:MP-ACLM-affine}
	\widetilde V_{\rm L} \coloneqq \sup_{( a^{\smallfont1}, a^{\smallfont2})\in\Ac^2} \bigg\{\E^\P \bigg[ X_T - \dfrac{c_{\rm L}}2  \int_0^T \big( a^{2}_t X_t + a^{1}_t \big)^2 \d t \bigg]\bigg\}, 
\end{align}
subject to
\begin{align} \label{eq:ACLM-states-before-linearizing}
	\begin{cases}
		\displaystyle	\d X_t = \bigg( a^{2}_t X_t +a^{1}_t + \Pi_B \bigg( \dfrac{Y_t^{\rm F}}{c_{\rm F}} \bigg) \bigg) \d t + \sigma \d W_t, \; t \in [0,T], \; X_0 = x_0 ,\\[0.8em]
		\displaystyle	\d Y_t^{\rm F} = - a^{2}_t Y_t^{\rm F} \d t + Z_t^{\rm F} \d W_t, \; t \in [0,T], \; Y^{\rm F}_T = 1.
	\end{cases}
\end{align}
To solve $\widetilde V_{\rm L}$, we define, for $(t,x,x^\prime, y,y^\prime, z, z^\prime, {\rm a}^{1}, {\rm a}^{2})  \in [0,T] \times \R^8$, the Hamiltonian
\begin{align*}
	h^{\rm L} (x,x^\prime, y,y^\prime, z, z^\prime, {\rm a}^{1}, {\rm a}^{2}) 
	\coloneqq  \bigg( {\rm a}^{2} x + {\rm a}^{1} + \Pi_B \bigg( \dfrac{y^\prime}{c_{\rm F}} \bigg) \bigg) y 
	+ \sigma z 
	- {\rm a}^{2} y^\prime x^\prime
	- \dfrac{c_{\rm L}}2 ({\rm a}^{2} x + {\rm a}^{1})^2.
\end{align*}
Notice that another issue arises here when using the maximum principle for problem $\widetilde V_{\rm L}$, as the maximiser of $h^{\rm L}$ is not well-defined without further restriction on the strategy $\alpha \in \Ac$. A way to tackle this issue is to impose \emph{a priori} bounds on $\partial_x a$, as done in \cite[Section 5.2]{bensoussan2015maximum}. We thus define the following ACLM-$k$ problem, for some $k>0$, in which we assume that $\|a^{2}\|_{\infty} \leq k$
	\begin{align}\label{eq:MP-ACLM-k-affine}
		\widetilde V_{\rm L}^k \coloneqq \sup_{( a^{\smallfont1}, a^{\smallfont2})\in\Ac^2_k} \bigg\{\E^\P \bigg[ X_T - \dfrac{c_{\rm L}}2  \int_0^T \big( a^{2}_t X_t + a^{1}_t \big)^2 \d t \bigg]\bigg\}, 
	\end{align}
	subject to \eqref{eq:ACLM-states-before-linearizing} and where $\Ac^2_k$ is the restriction of $\Ac^2$ to the pairs $( a^{1}, a^{2})$ such that $\|a^{2}\|_{\infty} \leq k$. 
By \cite[Theorem 4.1]{bensoussan2015maximum}, if $\hat \alpha$ is a solution {to the leader's ACLM-$k$ problem} with the corresponding state trajectory $(\hat X, \hat Y^{\rm F},\hat  Z^{\rm F})$, then there exists a triple of $\F^W$-adapted processes $(X^{\rm L}, Y^{\rm L}, Z^{\rm L})$ such that
\begin{align*}
	\begin{cases}
		\displaystyle	\d X^{\rm L}_t = - \partial_{y^\smallfont{\prime}} h^{\rm L} \d t - \partial_{z^\smallfont{\prime}} h^{\rm L} \d W_t,\;  t \in [0,T], \; X^{\rm L}_0 = 0, \\[0.5em]
		\displaystyle	\d Y_t^{\rm L} = - \partial_{x} h^{\rm L} \d t + Z_t^{\rm L} \d W_t, \; t \in [0,T], \; Y^{\rm L}_T = 1,
	\end{cases}
\end{align*}
where the derivatives of $h^{\rm L}$ are evaluated at
\begin{align*}
	\big(\hat X_t,X^{\rm L}_t, Y_t^{\rm L}, \hat Y_t^{\rm F}, Z_t^{\rm L}, \hat Z_t^{\rm F}, \hat a_t(\hat X_t ) - \partial_x \hat a _t(\hat X_t) \hat X_t, \partial_x \hat a _t(\hat X_t ) \big),
\end{align*}
and
\begin{align*}
	\big( \hat a _t(\hat X_t ) - \partial_x \hat a_t(\hat X_t ) \hat X_t, \partial_x \hat a_t(\hat X_t ) \big)
	\in  \argmax_{({\rm a}^{\smallfont 1}, {\rm a}^{\smallfont 2}) \in A_k^\smallfont{2}(\hat X_\smallfont{t})} 
	\big\{h^{\rm L} \big(\hat X_t,X^{\rm L}_t, Y_t^{\rm L}, \hat Y_t^{\rm F}, Z_t^{\rm L}, \hat Z_t^{\rm F}, {\rm a}^{1}, {\rm a}^{2}\big)\big\},
\end{align*}
for $t \in [0,T]$,
{ where $A^2_k(x)$ is the set of $(a^1,a^2)\in\R^2$ such that $a^1+a^2 x\in A$ and $|a^2| \leq k$.} Optimising $h^{\rm L}$ with respect to $a^{1}$ gives 
\begin{align*}
	\hat a^{1}(y,x) \coloneqq \dfrac{y}{c_{\rm L}} -a^{2} x,\;\text{and}\; h^{\rm L} (x,x^\prime, y,y^\prime, z, z^\prime, \hat a^{1}, a^{2}) 
	=  \dfrac{1}2 \dfrac{y^2}{c_{\rm L}} + \dfrac{y y^\prime}{c_{\rm F}} + \sigma z 
	- a^{2} y^\prime x^\prime.
\end{align*}

Then, the maximisation with respect to $a^{2}$ gives $\hat a^{2}  \coloneqq -k{\rm sign}(y^\prime x^\prime)$. Therefore, by the maximum principle, if $(\hat a_1, \hat a_2)$ is a solution to Problem \eqref{eq:MP-ACLM-affine}, then there exists a tuple of $\F^W$-adapted processes $(\hat X,X^{\rm L}, Y^{\rm F}, Z^{\rm F},Y^{\rm L}, Z^{\rm L})$ such that
\begin{align}\label{eq.smp.clm}
	\begin{cases}
		\displaystyle\d \hat X_t = \bigg( \hat a^{2}_t \hat X_t + \hat a^{1}_t + \Pi_B \bigg( \dfrac{Y_t^{\rm F}}{c_{\rm F}} \bigg) \bigg) \d t + \sigma \d W_t, \;  t \in [0,T], \; X_0 = x_0,\\[0.8em]
		\displaystyle \d X^{\rm L}_t = - \bigg( \dfrac{Y^{\rm L}_t}{c_{\rm F}} -\hat a^{2}_t X^{\rm L}_t \bigg) \d t, \;  t \in [0,T], \; X^{\rm L}_0 = 0, \\[1em]
		\displaystyle	\d Y_t^{\rm F} = - \hat a^{2}_t Y_t^{\rm F} \d t + Z_t^{\rm F} \d W_t, \;  t \in [0,T], \; Y^{\rm F}_T = 1, \\[0.8em]
		\displaystyle\d Y_t^{\rm L} = {\color{black} 0 \times } \d t + Z_t^{\rm L} \d W_t, \;  t \in [0,T], \; Y^{\rm L}_T = 1.
	\end{cases}
\end{align}
We can solve the system explicitly for $(Y^{\rm L}, Z^{\rm L},Y_\cdot^{\rm F},Z^{\rm F}) = (1,0,\mathrm{e}^{k(T-\cdot)},0)$ which implies that $X^{\rm L}$ is a negative process. Therefore, the rest of the solution to the system is given, for $t\in[0,T]$, by $\hat a^1_t = c^{-1}_{{\rm L}} - k \hat X_t$, $\hat a^2 = k$,
\begin{align*} 
	\hat X_t = x_0 + \dfrac{t}{c_{\rm L}} +  \int_0^t   \Pi_{B} \bigg( \frac{\mathrm{e}^{k(T-s)}}{c_{\rm F}} \bigg)  \d s + \sigma W_t, \; \text{and} \;
	X_t^{\rm L} = -\frac{(\mathrm{e}^{kt}-1)}{kc_{\rm F}}.
\end{align*}
We deduce from the solution above the following candidate equilibrium for ACLM-$k$,
\[
\alpha^\star_k(t,X_t) = \frac{1}{c_{\rm L}} + k(X_t - \hat X_t),  \; \beta_k^\star(t) = \Pi_{B} \bigg( \frac{\mathrm{e}^{k(T-t)}}{c_{\rm F}} \bigg), \; t \in [0,T].
\]
It is proved in  \Cref{lemma-ACLM} that this pair of strategies is a solution to the ACLM-$k$ problem for $k\in[0,\bar K]$, with $\bar K\coloneqq\frac{1}{T} \log\big( \frac{1}{b_\circ} \big( a_\circ + \frac{1}{c_{\rm L}} +\frac{1}{2c_{\rm F}} \big) + \frac12 b_\circ c_{\rm F} \big) $. Moreover, the value of the leader and the follower are given by 
\begin{align*}
	\widetilde V_{\rm L}^k &= x_0 + \frac{T}{2 c_{\rm L}} + b_\circ T - \frac{1}{k} \bigg(b_\circ \log(b_\circ c_{\rm F}) + b_\circ - \frac{1}{c_{\rm F}} \bigg) , \\
	V_{\rm F}(\alpha_k^\star) &= x_0 + \frac{T}{c_{\rm L}} + b_\circ t_\circ^k - \frac{1}{2}c_{\rm F} b_\circ^2 t_\circ^k +  \frac{(b_\circ c_{\rm F} - 1)}{k c_{\rm F} } - \frac{(b_\circ^2 c_{\rm F}^2 - 1)}{4 k c_{\rm {F}}}, \; t_\circ^k \coloneqq T - \frac{\log(b_\circ  c_{\smallfont{\rm F}})}{k}.
\end{align*}
Notice that the above values are clearly non-decreasing with $k$ for $b_\circ$ large enough, and that if we could let $k$ go to $\infty$ above, the values would converge to that of the FB and ACL scenarii. This is however not possible since $k$ has to remain lower than $\bar K$, and this is why we cannot here fully characterise the general solution to the ACLM case. Notwithstanding, and though this is a rather informal statement, we expect $\widetilde V_{\rm L}^{\bar K}$ to be a relevant approximation for the value $\widetilde V_{\rm L}=V_{\rm L}$ of the ACLM scenario. It can also be checked that the limit as $k$ goes to $0$ is the value of the follower in the AOL case, thus showing that the latter is dominated by the ACLM case.

\subsubsection{Adapted closed-loop strategies}\label{sss:ACL}

Recall that when the information structure is assumed to be adapted closed-loop (with memory), both the leader and the follower observe the paths of the state $X$ and the underlying Brownian motion, and can use these observations to construct their strategies. Then, the leader’s strategy space and the follower’s response strategy space are
\begin{align*}
	\Ac \coloneqq   \big  \{ \alpha &: [0,T]\times\Omega\ni(t,\omega)\longmapsto a(t,W_{\cdot \wedge t}(\omega), X_{\cdot \wedge t}(\omega)):  \\
	a &:[0,T]\times \Cc([0,T],\R)^2 \longrightarrow A,\; \text{Borel}  \big \}, \\
	\Bc \coloneqq  \big\{ \beta &: [0,T]\times\Omega\ni(t,\omega)\longmapsto b(t,W_{\cdot\wedge t}(\omega),X_{\cdot\wedge t}(\omega),\alpha_{\cdot\wedge t}(\omega)):  \\
	b &:[0,T]\times \Cc([0,T],\R)^2\! \times \Lc^0([0,T],A)) \longrightarrow B,\; \text{Borel} \big\}.
\end{align*}

In our example, and under this particular information structure, the leader has actually enough information to deduce the effort of the follower. {Therefore, if the leader has enough \textit{bargaining power}, she may actually force the follower to undertake a recommended effort. More precisely, for $a_\circ$ sufficiently large, the leader would be able to punish the follower if he deviates from the desired action.} Indeed, suppose the leader wants to force the follower to perform the action $\hat\beta \in\Bc$ while doing herself an action $\hat\alpha \in \Ac$. One possible way to induce these strategies is for the leader to play $\alpha_t \coloneqq  \hat\alpha_t - p \mathbf{1}_{\{\beta_\smallfont{t}^{\smallfont{\circ}} \neq \hat \beta_\smallfont{t} \}}$ for some penalty coefficient $p \geq 0$, and where $\beta^{\circ}$ represents the `reference' effort
\begin{align*}
	\beta_t^{\circ} & \coloneqq \underset{\varepsilon \searrow 0}{\rm{lim sup}}\; \bigg\{ \frac{\upbeta_t^{\circ}-\upbeta_{t-\varepsilon}^{\circ}}{\varepsilon}\bigg\}, \; \text{with} \; \upbeta_t^{\circ} \coloneqq  X_t - \sigma W_t- \int_0^t \hat{\alpha}_s \d s, \; t\in[0,T].
\end{align*}
In words, by implementing the strategy $\alpha$ defined above, the leader threatens to punish the follower whenever the observed effort $\beta^{\circ}$ deviates from the recommended effort $\hat{\beta}$. Note that the definition of $\beta^{\circ}$ makes use of the fact that the leader observes the trajectories of both the state and the Brownian motion. In particular, such strategy $\alpha$ could not be implemented under the previous ACLM information structure. In general, we can say that the leader can `effectively punish' the follower {for not playing $\hat\beta$} if
\begin{align}\label{eq.punishment}
	\exists \alpha\in \Ac, J_{\rm F}(\alpha,\hat \beta)\geq J_{\rm F}(\alpha, \beta), \; \forall \beta\in \Bc, \; \text{and}\; J_{\rm L}(\alpha,\hat\beta)\geq J_{\rm L}( \hat \alpha,\hat \beta).
\end{align}
In words, {there exists an admissible strategy $\alpha\in\Ac$ such that} the optimal response of the follower to $\alpha$ is to play $\hat \beta$, and there is no detriment to the leader's utility when implementing the strategy $\alpha$ instead of $\hat \alpha$. We mention that in this example, we actually have the equality $J_{\rm L}(\alpha,\hat\beta)= J_{\rm L}( \hat \alpha,\hat \beta)$. More precisely, the leader can replicate the first-best solution by choosing $\hat{\alpha} = c_{\rm L}^{-1}$ and forcing the follower's action $\hat{\beta} = b_\circ $. Indeed, given the leader's strategy $\alpha_t \coloneqq  c_{\rm L}^{-1} - p \mathbf{1}_{\{\beta_\smallfont{t}^{\smallfont{\circ}} \neq b_\smallfont{\circ}\}}$, we have for all $\beta \in \Bc$
\begin{align*}
	J_{\rm F} (\alpha, b_\circ) - J_{\rm F} (\alpha, \beta) = \E^\P \bigg[ \int_0^T \bigg( b_\circ - \dfrac{c_{\rm F}}2 b_\circ^2 + p \mathbf{1}_{\{\beta_\smallfont{t}^{\smallfont{\circ}} \neq b_\smallfont{\circ}\}} - \beta_t + \dfrac{c_{\rm F}}2 \beta_t^2  \bigg) \drm t \bigg],
\end{align*}
and therefore the effectiveness of the punishment amounts to \( p \geq  (2c_{\rm F})^{-1}+  c_{\rm F} b_\circ^2/2-b_\circ\). This strategy can be implemented if the process $\alpha$ defined above is admissible, in the sense that it takes values in $A$. {Therefore, if $a_\circ$ is sufficiently large, for instance if Condition \eqref{cdtn:a_0_ACL} holds,
	then the solution to the ACL Stackelberg equilibrium in this example coincides with the first-best problem, whose solution is given in \Cref{lem:first-best}.}

\begin{remark}
	The previous argument shows that for any Stackelberg game under adapted closed-loop {\rm(ACL)} strategies for which \eqref{eq.punishment} holds with $(\hat \alpha,\hat \beta)$ being the solution to the first-best scenario, then the equality $V_{\rm L} = V_{\rm L}^{\rm FB}$ holds.
\end{remark}

\subsection{Closed-loop strategies}\label{ss:approach-example}

The approach we developed in this paper provides a way of studying and characterising a new, albeit natural, type of Stackelberg equilibrium in which the both players only have access to the trajectory of the state variable $X$. Consistent with the literature on stochastic differential games (see, for example, \cite{carmona2016lectures}), we simply refer to this concept of information as \textit{closed-loop} (CL). Under this information structure, both players can take into account \emph{only} the whole past trajectory of the state $X$ when making their decisions. Then, the leader’s strategy space and the follower’s response strategy space are respectively given by 
\begin{align*}
	\Ac \coloneqq  \big \{ \alpha &: [0,T]\times\Omega\ni(t,\omega)\longmapsto a(t, X_{\cdot \wedge t}(\omega)): \,
	a :[0,T]\times \Cc([0,T],\R) \longrightarrow A,\;\text{Borel}\big\}, \\
	\Bc \coloneqq  \big\{ \beta &: [0,T]\times\Omega\ni(t,\omega)\longmapsto  b(t,X_{\cdot\wedge t}(\omega),\alpha_{\cdot\wedge t}(\omega)): \\
	b &:[0,T]\times \Cc([0,T],\R)\times \Lc^0([0,T],A)) \longrightarrow B,\;\text{Borel} \big\}.
\end{align*}

As already mentioned in the introduction, allowing for path-dependency leads to a more realistic and sophisticated form of equilibrium and, consequently, more challenging to solve. In this case, the difficulty arises as the approaches developed above for solving Stackelberg open-loop or closed-loop memoryless equilibrium, mostly relying on the maximum principle, can no longer be used. To the best of our knowledge, there is currently no method developed in the literature for solving Stackelberg games within the framework of this very general, yet quite natural, information structure. The aim of this paper is, therefore, precisely to propose an approach, based on the dynamic programming principle and stochastic target problems, for characterising the solution for this type of equilibrium. 

\smallskip 

Our methodology, which consists of two main steps, is informally illustrated through the example presented at the top of this section. 
The first step is to use the follower's value function as a state variable for the leader's problem. 
More precisely, this value function solves a backward SDE, and by writing it in a forward way, we are able to reformulate the leader's problem as a stochastic control problem of an SDE system with stochastic target constraints. 
The second step consists in applying the methodology developed by \cite{bouchard2010optimal} to characterise such a stochastic control problem with target constraints through a system of Hamilton--Jacobi--Bellman equations.
Note that the reasoning developed in this section is quite informal, the aim being simply to illustrate our method; the reader is referred to \Cref{sec:general_formulation} onwards for the rigorous description of our approach.

\begin{remark}
	The stochastic target approach developed here is specifically designed for settings in which the leader cannot observe the underlying noise and must base her strategy solely on the observed trajectory of the state process. This structure, reminiscent of moral hazard settings in principal--agent problems, makes the approach naturally suited to path-dependent closed-loop strategies. By contrast, for `adapted' equilibrium concepts $(i)$--$(iv)$, where strategies may depend explicitly on the Brownian motion, our approach may no longer be applicable. Nevertheless, in such cases, classical tools such as the stochastic maximum principle or HJB methods can instead be used, and often lead to simpler formulations, as illustrated in this section through the example.
\end{remark}

\subsubsection{Reformulation as a stochastic target problem}\label{ss.leader.reformulation}

Recall that, given the leader's strategy $\alpha \in \Ac$, the follower's problem is given by \eqref{eq:pb-follower-example}. The idea of our approach to compute the Stackelberg equilibrium for closed-loop strategies is to consider the BSDE satisfied by the value function of the follower.\footnote{Actually, one should switch to the weak formulation of the problem in order to consider the BSDE representation of the follower's value. Nevertheless, once again our goal here is simply to illustrate our method, and we refer to \Cref{sec:general_formulation} for the rigorous formulation.} With this in mind, we introduce the dynamic value function of the follower given by
\begin{align*}
	Y_t \coloneqq  \underset{\beta \in \Bc}{{\esssup}}^\P \bigg\{\E^\P \bigg[ X_T - \dfrac{c_{\rm F}}2  \int_t^T \beta_s^2 \d s \bigg| \Fc_t \bigg]\bigg\}, \; t \in [0,T],
\end{align*}
where the state variable $X$ follows the dynamics given by \eqref{eq:dynamic_x}. It is easy to show that, by introducing the appropriate Hamiltonian, \textit{i.e.}
\begin{align*}
	H^{\rm F} (t, z, a) 
	\coloneqq  \sup_{b \in B} \bigg\{ (a+b) z - \dfrac{c_{\rm F}}2 b^2 \bigg\}, \; (t,z,a)  \in [0,T] \times \R \times A,
\end{align*}
the value function of the follower for a given $\alpha \in \Ac$ is a solution to the following BSDE
\begin{align*}
	\d Y_t = - H^{\rm F} (t,Z_t,\alpha_t) \mathrm{d}t + Z_t \mathrm{d}X_t, \; t \in [0,T], \; Y^\alpha_T = X_T,
\end{align*}
for some $Z \in \Zc$, where $\Zc$ is a set of $\F$-adapted processes taking value in $\R$ and satisfying appropriate integrability conditions. The maximiser of the Hamiltonian is naturally given by the functional $b^\star(z) = \Pi_{\tilde B} (z)/c_{\rm F}$, $z \in \R$, where $\tilde B \coloneqq [0, b_\circ c_{\rm F}]$. For a given $\alpha \in \Ac$ chosen by the leader, we are thus led to consider the FBSDE system
\begin{align}\label{eq:ACL-follower-example}
	\begin{cases}
		\displaystyle	\d X_t = \bigg( \alpha_t + \dfrac{1}{c_{\rm F}} \Pi_{\tilde B} \big( Z_t \big) \bigg) \d t + \sigma \d W_t, \; t \in [0,T], \; X_0 = x_0, \\[0.8em]
		\displaystyle	\d Y_t = \dfrac{1}{2 c_{\rm F}} \Pi^2_{\tilde B} \big( Z_t \big) \d t 
		+ \sigma Z_t \d W_t, \; t \in [0,T], \; Y_T = X_T.
	\end{cases}
\end{align}
Consequently, the leader's problem defined by \eqref{eq:pb-leader-example} becomes
\begin{align}\label{eq:leader_FBSDE}
	V_{\rm L} (x_0) = \sup_{\alpha \in \Ac} \bigg\{\E^\P \bigg[ X_T - \dfrac{c_{\rm L}}2  \int_0^T \alpha_t^2 \d t \bigg]\bigg\}, \text{ subject to the FBSDE system \eqref{eq:ACL-follower-example}}.
\end{align}

Unfortunately, the literature on the optimal control problem of FBSDEs is quite scarce and, to the best of our knowledge, is not able to accommodate the closed-loop scenario described above, see for instance \cite{yong2010optimality} or \cite{wu2013general}. 
Nevertheless, to continue the reformulation of the leader's problem, one can write the BSDE in \eqref{eq:ACL-follower-example} as a forward SDE for a given initial condition $y_0 \in \R$, and thus consider the following SDE system
\begin{align}\label{eq:ACL-follower-example-FFSDE}
	\begin{cases}
		\displaystyle	\d X_t = \bigg( \alpha_t + \dfrac{1}{c_{\rm F}} \Pi_{\tilde B} ( Z_t) \bigg) \d t + \sigma \d W_t, \; t \in [0,T], \; X_0 = x_0,\\[0.8em]
		\displaystyle	\d Y_t = \dfrac{1}{2 c_{\rm F}} \Pi^2_{\tilde B} ( Z_t)  \d t 
		+ \sigma Z_t \d W_t, \; t \in [0,T], \; Y_0 = y_0,
	\end{cases}
\end{align}
for some $(\alpha,Z) \in \Ac \times \Zc$.
However, by doing so, one needs to take into account an additional constraint, in order to ensure that the equality $Y_T = X_T$ holds with probability one at the end of the game. 
More precisely, one of the main results of our paper, stated for the general framework in \Cref{prop.equivformulationleader}, is that the leader's problem originally defined here by \eqref{eq:pb-leader-example} is equivalent to the following stochastic \textit{control and target} problem
\begin{align}\label{eq.ex.leader.reform}
	\widehat V_{\rm L} (x_0) &\coloneqq  \sup_{y_\smallfont{0} \in \R}\big\{ \widetilde V_{\rm L} (0,x_0,y_0)\big\}, \\ \text{where} \; 
	\widetilde V_{\rm L} (0,x_0,y_0)
	&\coloneqq \sup_{(Z,\alpha) \in \Cf(x_\smallfont{0},y_\smallfont{0})} \bigg\{\E^{\P}  \bigg[ X_T - \dfrac{c_{\rm L}}2  \int_0^T \alpha_t^2 \d t \bigg]\bigg\}, \; (x_0,y_0) \in \R^2, \nonumber
\end{align}
subject to the system \eqref{eq:ACL-follower-example-FFSDE}, and where $\mathfrak{C}(x_0,y_0)\coloneqq \{ (Z,\alpha)\in\Zc\times\Ac :Y_T = X_T, \; \P\as \}$. 

\begin{remark}\label{rk:markovian}
	In the above reformulation of the leader's problem, $\widetilde V_{\rm L}$ corresponds to the value function of an optimal control problem with stochastic target constraint. More importantly, while the original leader's problem was non-Markovian due to her closed-loop $($path-dependent$)$ strategy $\alpha$, the consideration of $Y$ as an additional state variable now makes her control problem Markovian. In particular, the strategy $\alpha_t$ at time $t \in [0,T]$, originally defined as a measurable function of the path of $X$ up to time $t$, can be transformed into a function of $X_t$ and $Y_t$. This property of the leader's reformulated problem is standard in continuous-time principal-agent problems, and was therefore expected here since we are using a similar approach. This also highlights the fact that in our formulation, the leader will generically use the whole path of $X$ in order to design the equilibrium, since $Y$ is generally \emph{not} a Markovian function of $X$.
\end{remark} 

\subsubsection{Interpretation of the reformulated problem}\label{sec.ex.cl.interp}

For fixed $y_0 \in \R$, the leader's problem is to choose a couple $(Z,\alpha)$ of admissible controls. With this in mind, given the state $X$ observable in continuous time, she can construct an additional process $Y$, starting from $Y_0 = y_0$, with the following dynamics
\begin{align*}
	\d Y_t = - H^{\rm F} (t,Z_t,\alpha_t) \mathrm{d}t + Z_t \mathrm{d}X_t, \; t \in [0,T].
\end{align*}
Note $Y$ can be constructed based solely on the observation through time of the path of $X$, and in particular does not require any knowledge of the follower's control $\beta$ nor of the underlying Brownian motion $W$. Now, the couple $(Z,\alpha)$ of admissible processes chosen by the leader should be such that the terminal condition $Y_T = X_T$ is satisfied $\P$--a.s. Indeed, under this important condition, the follower's problem originally defined by \eqref{eq:pb-follower-example} can be rewritten as
\begin{align*}
	V_{\rm F} (\alpha) \coloneqq  \sup_{\beta \in \Bc} \bigg\{\E^\P \bigg[ X_T - \dfrac{c_{\rm F}}2  \int_0^T \beta_t^2 \d t \bigg] \bigg\}
	= \sup_{\beta \in \Bc} \bigg\{\E^\P \bigg[ Y_T - \dfrac{c_{\rm F}}2  \int_0^T \beta_t^2 \d t \bigg]\bigg\}.
\end{align*}
With the knowledge of the dynamic of $Y$, as well as the leader's controls $(Z,\alpha)$, the follower sees that
\begin{align*}
	V_{\rm F} (\alpha) 
	&= y_0 + \sup_{\beta \in \Bc} \bigg\{\E^\P \bigg[ - \int_0^T H^{\rm F} (t,Z_t,\alpha_t) \mathrm{d}t
	+ \int_0^T Z_t \d X_t - \dfrac{c_{\rm F}}2  \int_0^T \beta_t^2 \d t \bigg] \bigg\}\\
	&= y_0 + \sup_{\beta \in \Bc}\bigg\{ \E^\P \bigg[ \int_0^T \bigg( Z_t \beta_t - \dfrac{c_{\rm F}}2 \beta_t^2 \bigg) \mathrm{d}t \bigg] \bigg\}- \E^\P \bigg[ \int_0^T \sup_{b \in B} \Big\{ b Z_t - \dfrac{c_{\rm F}}2 b^2 \Big\} \mathrm{d}t \bigg],
\end{align*}
making it clear, at least heuristically here, that his best response strategy coincides with the maximiser of the Hamiltonian, namely $\beta_t \coloneqq \Pi_{\tilde B} (Z_t) / c_{\rm F}$, $t \in [0,T]$. This optimal choice provides him with the maximum value, for all $(\alpha, Z) \in \Ac \times \Zc$, \textit{i.e.} $V_{\rm F} (\alpha) = y_0$. Overall, for a given $y_0 \in \R$, which actually coincides with the follower's value, the leader designs her strategy through the couple $(Z,\alpha)$ such that $Y_T = X_T$ is satisfied $\P\as$ for a well-chosen process $Y$, inducing the follower's optimal response $\beta_\cdot \coloneqq  \Pi_{\tilde B} (Z_\cdot) / c_{\rm F}$. Note that the leader should not only communicate to the follower the couple $(Z,\alpha)$ of controls, but she should also indicate how these controls are designed, namely the construction of the underlying process $Y$: all these ingredients are part of the strategy implemented by the leader.

\subsubsection{Characterisation of the equilibrium}\label{sss:CL_sto_target_eq}

Given the reformulation of the leader's problem as a stochastic control problem with stochastic target constraint, the second step consists now in applying the methodology in \cite{bouchard2010optimal} to solve the latter problem and thus obtain a characterisation of the corresponding Stackelberg equilibrium. Recall first that in our illustrative example, the leader's reformulated problem is given by \eqref{eq.ex.leader.reform}, namely
\begin{align*}
	\widehat V_{\rm L} (x_0) &\coloneqq  \sup_{y_\smallfont{0} \in \R}\big\{ \widetilde V_{\rm L} (0,x_0,y_0)\big\}, \;
	\widetilde V_{\rm L} (t,x,y) \coloneqq  \sup_{(Z,\alpha) \in \Cf(t,x,y)} \bigg\{\E^{\P}  \bigg[ X_T^{t,x,Z,\alpha} - \dfrac{c_{\rm L}}2  \int_t^T \alpha_s^2 \d s \bigg]\bigg\}, 
\end{align*}
where for $(t,x,y) \in [0,T] \times \R^2$, the set $\Cf(t,x,y)$ is defined by
\begin{align*}
	\Cf(t,x,y) \coloneqq  \big\{ (Z,\alpha)\in\Zc\times\Ac: Y_T^{t,y,Z,\alpha} = X_T^{t,x,Z,\alpha}, \; \P\as  \big\},
\end{align*}
with the controlled state variables $X$ and $Y$ satisfying the following dynamics
\begin{align}
	\begin{cases}
		\displaystyle	\drm X_s^{t,x,Z,\alpha} = \bigg( \alpha_s + \dfrac{1}{c_{\rm F}} \Pi_{\tilde B} ( Z_s) \bigg) \d s + \sigma \d W_s, \; s \in [t,T], \; X_t^{t,x,Z,\alpha} = x, \\[0.8em]
		\displaystyle	\drm Y_s^{t,y,Z,\alpha} = \dfrac{1}{2 c_{\rm F}} \Pi^2_{\tilde B} ( Z_s) \d s	+ \sigma Z_s \d W_s, \; s \in [t,T], \; Y_t^{t,y,Z,\alpha} = y.
	\end{cases}
\end{align}
In particular, for fixed $(t,x,y) \in [0,T] \times \R^2$, $\widetilde V_{\rm L} (t,x,y)$ corresponds to the dynamic value function of an optimal control problem with stochastic target constraints. 
Thus, we define for any $t \in [0,T]$ the target reachability set
\begin{align*}
	V_G(t)\coloneqq  \big\{ (x,y)\in\R^2:  \exists (Z,\alpha)\in\Zc\times\Ac, \; Y_T^{t,y,Z,\alpha} = X_T^{t,x,Z,\alpha},\;  \P\as \big\}.
\end{align*}
An intermediary but important result for our approach, see \Cref{lemma.boundaries}, is to show that the closure of the reachability set $V_G(t)$ coincides with the following set
\begin{align*}
	\hat{V}_G(t)\coloneqq  \{ (x,y) \in \R^2 : w^-(t,x) \leq y \leq w^+(t,x) \},
\end{align*}
for appropriate auxiliary functions $w^-$ and $w^+$. 
It is then almost straightforward to extend the approach in \cite{bouchard2010optimal} to characterise the leader's value function $\widetilde V_{\rm L}$ as the solution to a specific system of Hamilton--Jacobi--Bellman (HJB) equations and therefore determine the corresponding optimal strategy. 
More precisely, this can be achieved in three main steps. 
First, the auxiliary functions $w^-$ and $w^+$ can be characterised as solutions (in an appropriate sense) to specific HJB equations. 
Then, the leader's value function $\widetilde V_{\rm L}$ satisfies another specific HJB equation on each of these boundaries. 
Finally, in the interior of the domain, $\widetilde V_{\rm L}$ is a solution to the classical HJB equation, but with the non-standard boundary conditions obtained in the previous step, see \Cref{thm.verification}. 
These three steps are described below in the framework of our illustrative example.

\paragraph*{The auxiliary functions} The lower and upper boundaries $w^-$ and $w^+$ can be characterised as the solutions to the following specific HJB equations on $[0,T) \times \R$,
\begin{align*}
	- \partial_t w^-  - \inf_{ (z,a) \in N(p)}   \big\{h^b( \partial_x w^- , \partial_{xx} w^- ,z,a)\big\}  &= 0, \quad w^-(T,x) = x, \\
	- \partial_t w^+  - \sup_{ (z,a) \in N(p)} \big\{h^b( \partial_x w^+ , \partial_{xx} w^+ ,z,a)\big\}  &= 0, \quad w^+(T,x) = x, \\
	\text{ with } h^b( p,q,z,a)  \coloneqq - \dfrac{1}{2 c_{\rm F}} \Pi^2_{\tilde B} (z) +  \bigg( a + \dfrac{1}{c_{\rm F}} \Pi_{\tilde B} (z) \bigg)p &+ \dfrac12 \sigma^2 q, 
\end{align*}
for all $(p,q) \in \R^2$ and $(z,a) \in  N(p) \coloneqq \{(z,a) \in \R \times A : \sigma z = \sigma  p\}$.
Since $\sigma \neq 0$, the constraint set $N$ boils down to $N(p) = \{(p,a):a \in A\}$, for all $p \in \R$. Using in addition the ansatz $\partial_x w^{\pm} \in \tilde B$, one obtains the following HJB equations on $(t,x) \in [0,T) \times \R$
\begin{align*}
	- \partial_t w^-(t,x) - \dfrac12 \sigma^2 \partial_{xx} w^-(t,x) - \dfrac{1}{2 c_{\rm F}} \big(\partial_x w^-(t,x) \big)^2 - \inf_{ a \in A} \big\{ \partial_x w^-(t,x) a \big\} &= 0, \\
	- \partial_t w^+(t,x) - \dfrac12 \sigma^2 \partial_{xx} w^+(t,x) - \dfrac{1}{2 c_{\rm F}} \big(\partial_x w^+(t,x) \big)^2 - \sup_{ a \in A} \big\{ \partial_x w^+(t,x) a \big\} &= 0,
\end{align*}
with terminal condition $w^-(T,x) = w^+(T,x) = x$, $x \in \R$. Recalling that $A=[-a_\circ,a_\circ]$, one can explicitly compute the auxiliary functions, solution to the previous HJB system, as 
$w^{\pm} (t,x) = x + \ell^\pm (t)$ for $(t,x) \in [0,T] \times \R$ where
\begin{align}\label{eq:lpm}
	\ell^-(t)\coloneqq \bigg(\frac{1}{2c_{\rm F}}-a_\circ\bigg)(T-t),\; \ell^+(t)\coloneqq \bigg(\frac{1}{2c_{\rm F}}+a_\circ\bigg)(T-t), \; t \in [0,T].
\end{align}

\begin{remark}
	In the context of this example, to have meaningful, \emph{i.e.} finite, solutions, the boundedness assumption on $A$ is necessary. 
	Though the methodology developed in {\rm \cite{bouchard2010optimal}} can cover the case of unbounded action sets, this will require imposing growth conditions that, in turn, will rule out the framework of the current example. 
	Moreover, the possibility of discontinuous or exploding solutions requires working with viscosity solutions to the above {\rm PDEs}.
\end{remark}

\paragraph*{The value function at the boundaries} The second step is to determine the HJB equations satisfied by the value function $\widetilde V_{\rm L}(t,x,y)$ on the boundaries, \textit{i.e.} on $\{y = w^-(t,x) \}$ and $\{y = w^+(t,x) \}$, for all $(t,x) \in [0,T]\times \R$. With this in mind, we define for all $p \coloneqq (p_1,p_2)^\top \in \R^2$, $q \in \R^{2\times2}$ and $(z,a) \in \R \times A$,
\begin{align*}
	{\rm h} (p,q,z,a)  \coloneqq   - \dfrac{c_{\rm L}}2  a^2 + \bigg( a + \dfrac{\Pi_{\tilde B} (z)}{c_{\rm F}}  \bigg) p_1 
	+ \dfrac{\Pi^2_{\tilde B} (z) }{2 c_{\rm F}} p_2
	+ \dfrac12 \sigma^2 q_{11}
	+ \dfrac12 \sigma^2 z^2 q_{22}
	+ \sigma^2 z q_{12}.
\end{align*}
We then introduce the following Hamiltonians, for all $(t,x,p,q) \in [0,T] \times \R \times \R^2 \times \R^3$,
\begin{align*}
	{\rm H}^-(t,x,p,q)\coloneqq  \sup_{ (z,a) \in \Zc^\smallfont{-}(t,x) }\big\{{\rm h} (p,q,z,a)\big\}, \; \text{and} \;
	{\rm H}^+(t,x,p,q)\coloneqq  \sup_{ (z,a) \in \Zc^\smallfont{+}(t,x) }\big\{{\rm h} (p,q,z,a)\big\},
\end{align*}
in which the sets $\Zc^\pm(t,x)$ are defined by
\begin{align*}
	\Zc^\pm(t,x) &\coloneqq  \big\{(z,a) \in \R \times A  :  \sigma z = \sigma \partial_x w^\pm, \; \text{and} \; \pm \partial_t w^\pm \pm h^b ( \partial_x w^\pm, \partial_{xx} w^\pm,z,a) \geq 0 \big\}.
\end{align*}
The value function $\widetilde V_{\rm L}$ should satisfy on $\{y = w^-(t,x)\}$ the following equation
\begin{align*}
	- \partial_t v(t,x,y) - {\rm H}^-(t,x, \partial_{\rm x} v(t,x,y),\partial^2_{\rm x} v(t,x,y)) = 0, \; (t,x,y) \in [0,T) \times \R^2, 
\end{align*}
with terminal condition $v(T,x,w^-(T,x)) = x$, $x \in \R$.\footnote{Here, $\partial_{\rm x} v(t,x,y)$ and $\partial^2_{\rm x} v(t,x,y)$ denote respectively the gradient and Hessian of the function $v$ in \emph{both} space variables ${\rm x}\coloneqq (x,y)$.} Given the previous HJB equation satisfied by $w^-$, it is clear that $\Zc^-(t,x) = \{(1,-a_\circ)\}$, for all $(t,x) \in [0,T) \times \R$. We thus obtain a standard PDE for $\widetilde V_{\rm L}$ on $\{y = w^-(t,x) \}$, for $(t,x) \in [0,T] \times \R$,
\begin{align*}
	- \partial_t v 
	+ \dfrac{1}2 c_{\rm L} a_\circ^2 
	- \bigg( \dfrac{1}{c_{\rm F}} - a_\circ \bigg) \partial_x v
	- \dfrac{1}{2 c_{\rm F}} \partial_y v 
	- \dfrac12 \sigma^2 \partial_{xx} v
	- \dfrac12 \sigma^2 \partial_{yy} v
	- \sigma^2 \partial_{xy} v  = 0, 
\end{align*}
which leads to the following solution
\begin{align*}
	\widetilde V_{\rm L} (t,x,w^-(t,x)) = x + \bigg( - a_\circ - \dfrac12 c_{\rm L} a_\circ^2 + \dfrac{1}{c_{\rm F}} \bigg) (T-t), \; (t,x) \in [0,T] \times \R.
\end{align*} 

On the other hand, on $\{y = w^+(t,x)\}$, the value function should be solution to
\begin{align*}
	- \partial_t v(t,x,y) - {\rm H}^+(t,x, \partial_{\rm x} v(t,x,y),\partial^2_{\rm x} v(t,x,y)) = 0, \; (t,x,y) \in [0,T) \times \R^2, 
\end{align*}
with $v(T,x,w^+(T,x)) = x$, $x \in \R$. Through similar computations, one obtains
\begin{align*}
	\widetilde V_{\rm L} (t,x,w^+(t,x)) = x + \bigg( a_\circ - \dfrac12 c_{\rm L} a_\circ^2 + \dfrac{1}{c_{\rm F}} \bigg) (T-t), \; (t,x) \in [0,T] \times \R.
\end{align*}

\paragraph*{Value function inside the domain} Finally,  for $(t,x) \in [0,T] \times \R$ but inside the domain, the value function $\widetilde V_{\rm L}$ is solution to the classical HJB equation for stochastic control
\begin{align*}
	&- \partial_t v(t,x,y) - H^{\rm L} (\partial_{\rm x} v(t,x,y),\partial^2_{\rm x} v(t,x,y)) = 0, \; y \in (w^-(t,x), w^+(t,x)) \\
	\text{where}\; &\ H^{\rm L}(p,q) \coloneqq  \sup_{ (z,a) \in  \R \times A } \big\{{\rm h} (p,q,z,a)\big\}, \; (p,q) \in \R^2 \times \R^{2\times2},
\end{align*}
but instead of the usual terminal condition, we need to enforce the specific boundary conditions obtained in the previous step.
The previous HJB equation can be written as
\begin{align}\label{eq:HJB_example}
	-\partial_t v 
	- \sup_{ z \in \R } \bigg\{ \dfrac{1}{c_{\rm F}} \Pi_{\tilde B} (z) \partial_x v 
	+ \dfrac{1}{2 c_{\rm F}} \Pi^2_{\tilde B} (z) \partial_y v
	+ \dfrac12 \sigma^2 z^2 \partial_{yy} v
	+ \sigma^2 z \partial_{xy} v \bigg\} & \\
	- \sup_{ a \in A } \bigg\{ a \partial_x v 
	- \dfrac{c_{\rm L}}2  a^2 \bigg\}
	- \dfrac12 \sigma^2 \partial_{xx} v &= 0. \nonumber
\end{align}
It is relatively straightforward to show that one can look for a solution of the form
\[
v(t,x,y)=x+u(t,y-x),\; (t,x)\in[0,T]\times\R,\; y\in[w^-(t,x),w^+(t,x)],
\]
where the map $u$ now solves the PDE on the domain $t\in[0,T),\; \xi\in(\ell^-(t),\ell^+(t))$
\begin{align*}
	\begin{cases}
		\displaystyle \partial_tu +\sup_{a\in A}\bigg\{a(1-\partial_\xi u)-\frac{c_{\rm L}}2a^2\bigg\} \\
		\displaystyle \quad \; \; \, +\sup_{z\in\R}\bigg\{ \dfrac{\Pi_{\tilde B} (z)}{c_{\rm F}}  (1-\partial_\xi u) 
		+ \dfrac{\Pi^2_{\tilde B} (z)}{2 c_{\rm F}}  \partial_\xi u + \dfrac{\sigma^2}2  (z-1)^2 \partial_{\xi\xi} u \bigg\}=0,\\
		\displaystyle u(t,\ell^-(t))=\bigg(\frac1{c_{\rm F}}-a_\circ-\frac{c_{\rm L}}{2}a_\circ^2\bigg)(T-t),\;
		\displaystyle u(t,\ell^+(t))=\bigg(\frac1{c_{\rm F}}+a_\circ-\frac{c_{\rm L}}{2}a_\circ^2\bigg)(T-t),
	\end{cases}
\end{align*}
where the functions $\ell^-$ and $\ell^+$ were defined in \eqref{eq:lpm}.
As far as we know, the previous PDE does not admit explicitly solutions, but can be solved numerically. Once this is achieved, it remains to maximise $u(0,\xi)$ over $\xi \in (\ell^-(0), \ell^+(0))$. Given such a maximiser $\xi^\star \in (\ell^-(0), \ell^+(0))$, the corresponding $y_0 \coloneqq x + \xi^\star \in [w^-(0,x), w^+(0,x)]$ and the associated value $v(0,x,y_0) = u(0,\ell)$ will respectively give the follower's and leader's value functions for the initial condition $X_0 = x$. The numerical results are presented next.


\begin{remark}
Though not the focus of this work, it will be lackadaisical on our end not to digress on Markovian solutions to the Stackelberg game.\footnote{We are grateful to an anonymous referee for encouraging us to include this discussion.} As highlighted in \Cref{rk:markovian}, a closed-loop equilibrium generally fails to be Markovian. Nonetheless, in specific settings, it may be possible to rewrite the corresponding strategies as Markovian functions of the state, evaluated along its optimal trajectory. We refer to \Cref{rmk.verification} $(iii)$ for a discussion of when such a Markovian representation may hold in the general case. However, even if such a Markovian representation of the strategies is possible, this does not necessarily constitute an equilibrium within the class of Markovian strategies. 

\smallskip

In order to assess whether a closed-loop equilibrium can also qualify as a Markovian equilibrium, it would be necessary to first specify what constitutes a Markovian solution in the Stackelberg framework. A first natural choice for the strategy spaces is
\begin{align*}
\begin{split}
	\Ac &\coloneqq   \big\{  \alpha : [0,T]\times\Omega\ni(t,\omega)\longmapsto  a(t,X_t(\omega)): 
	a :[0,T]\times \R \longrightarrow A,\; \text{Borel}\big\}, \\
	\Bc &\coloneqq  \big\{ \beta : [0,T]\times\Omega\ni(t,\omega)\longmapsto b(t,X_t(\omega),\alpha_t(\omega)):
	b :[0,T]\times \R\times A \longrightarrow B,\; \text{Borel} \big\}.
\end{split}
\end{align*}
However, with this choice, the notion of Markovian equilibrium would align---at least in Markovian settings---with the adapted feedback solution concept. Indeed, as already mentioned in \Cref{sss:AF}, adapted feedback strategies in Markovian frameworks turn out to be independent of the Brownian motion and would therefore coincide with Markovian strategies as defined above. Moreover, the above definition of $\Bc$ prevents the follower's admissible strategies to depend on the whole control process of the leader, which would result in a \textit{local} notion of Stackelberg equilibrium, as also observed in the adapted feedback case. This would hinder a meaningful comparison with the closed-loop equilibrium studied in this section, as consistency in the definition of Stackelberg equilibrium seems crucial, even if the admissible strategy spaces differ.

\smallskip

Alternatively, to maintain a global notion of Stackelberg equilibrium, one may slightly change the above strategy spaces, to allow the follower's response to depend on the entire past trajectory of the leader's strategy, namely $\alpha_{\cdot\wedge t}(\omega)$ at time $t \in [0,T]$.
To characterise such a Markovian equilibrium, one possibility would be to apply the first part of our approach described in \Cref{ss.leader.reformulation}, namely rewriting the leader's problem as a control problem over an FBSDE system, \textit{i.e.} \eqref{eq:leader_FBSDE}. 
The second step of our methodology, relying on the stochastic target approach, inherently leads to path-dependent strategies and is therefore unlikely to be suitable in this Markovian context. One may need to rely on existing techniques developed for the control of FBSDE systems. However, while these are well-understood in Markovian settings, the difficulty here lies in the fact that the follower’s strategy may depend on the entire trajectory of the leader’s control, which complicates the application of such methods and raises non-trivial challenges. This is an interesting direction that we believe would deserve further investigation in future research.
\end{remark}

\subsubsection{Comparison with other solution concepts and numerical results}\label{sss:numerics}

For the numerical results\footnote{In \Cref{fig1:numerics,fig2:numerics,fig3:numerics,fig4:numerics}, the `CL curves' in the right-hand plots labelled (b) appear piecewise constant. This is not a property of the true solution, but rather an artefact of the numerical method. Specifically, it results from the space discretisation: the grid resolution cannot be refined arbitrarily due to convergence and stability constraints, leading to the observed behaviour.}, we first consider a benchmark scenario in \Cref{fig1:numerics}, with parameters $T=1$, $x=1$, $c_{\rm F} = c_{\rm L} = 1$, $\sigma =1$, $a_\circ = 10$, and $b_\circ = 3$.
We then study in \Cref{fig3:numerics} a scenario in which the leader's cost of effort increases to $c_{\rm L} = 1.25$, and conversely in \Cref{fig2:numerics} when now the follower's cost of effort increases to $c_{\rm F} = 1.25$. Finally, we represent in \Cref{fig4:numerics} the impact of an increase of $a_\circ$ from $10$ to $15$. Note that in these four scenarii, $a_\circ$ is chosen sufficiently large so that \Cref{cdtn:a_0_ACLM} and \Cref{cdtn:a_0_ACL} are satisfied. {Moreover, in all the simulations below, the curve `ACLMk' represents the value of the problem ACLM-$\bar{K}$, described in \Cref{sss:ACLM}.}

\begin{figure}[!ht]
	\centering
	\begin{subfigure}{.49\textwidth}
		\centering
		\includegraphics[scale=0.47]{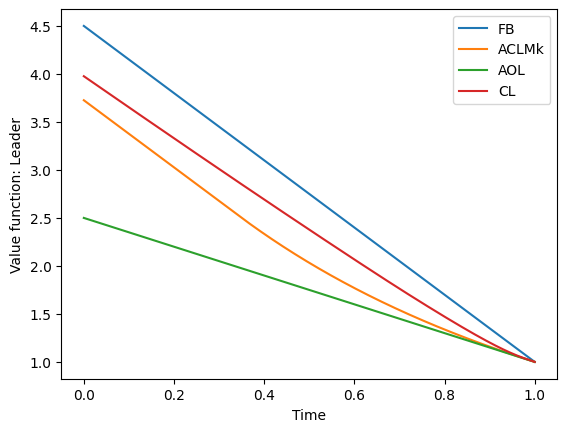}
		\centering
		\caption{\small Leader's value function.}
		\label{fig1:value_leader}
	\end{subfigure}
	\begin{subfigure}{.49\textwidth}
		\includegraphics[scale=0.47]{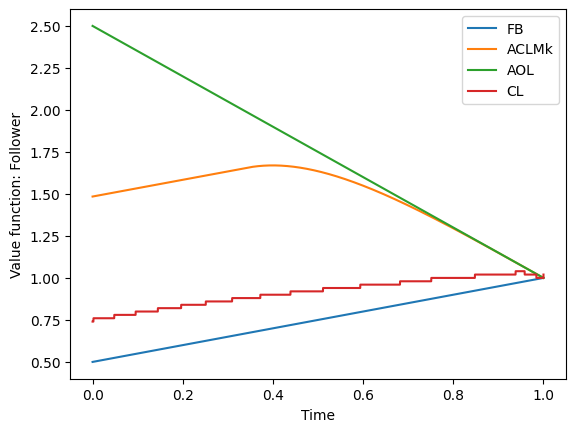}
		\centering
		\caption{\small Follower's value function.}
		\label{fig1:value_follower}
	\end{subfigure}
	\caption{\small Comparison of the value functions, for $c_{\rm F} = c_{\rm L} = 1$, and $a_\circ = 10$.}
	\label{fig1:numerics}
\end{figure}

\smallskip

We first remark that for the four sets of parameters, we have the following inequalities for the leader's value function,
\begin{align*}
	V_{\rm L}^{\rm AOL} = V_{\rm L}^{\rm AF}< V_{\rm L}^{\rm ACLM-\bar{K}} < V_{\rm L}^{\rm CL} < V_{\rm L}^{\rm ACL} = V_{\rm L}^{\rm FB},
\end{align*}
and the converse inequalities for the follower's value. Most of these inequalities were to be expected, as already mentioned in \eqref{eq:value_info}. In addition, the inequality $V_{\rm L}^{\rm AOL} < V_{\rm L}^{\rm CL}$ is straightforward using the explicit solution for the AOL case. Indeed, such solution is a pair of constant effort, which is obviously an admissible strategy for the CL information structure. Moreover, it is easy to show that, if the leader decides to commit to the strategy $1/c_{\rm L}$ in the CL case, the follower's best response will still be $1/c_{\rm F}$, and therefore $V_{\rm L}^{\rm AOL} \leq V_{\rm L}^{\rm CL}$. To obtain the strict inequality, one can notice that such equilibrium would actually correspond to forcing $z=1$ (instead of $z \in \R$) in the HJB equation \eqref{eq:HJB_example}. Using an appropriate comparison principle for PDE, we thus obtain the desired inequality.
Furthermore, for these chosen sets of parameters, the leader's value in the closed-loop equilibrium is higher than her value in the ACLM-$\bar K$ scenario. Recall that this scenario provides the optimal ACLM strategy when the derivative of the leader's effort is assumed to be bounded by $\bar K$, and where $\bar K$ is chosen so that the leader's strategy remains admissible, \textit{i.e.} takes value in $A$. The value obtained in this scenario should therefore be comparable with the value in the ACLM case, and the numerical results therefore suggest that the value obtained by the leader in the CL case is greater than the value she would obtain in an ACLM situation. More importantly for our study, the numerical results highlight that the behaviour of the value functions over time is significantly different, confirming that our proposed closed-loop equilibrium leads to fundamentally different strategies with respect to the ACLM information structure, even in this very simple example.

\smallskip

Comparing \Cref{fig1:numerics} with \Cref{fig3:numerics} in more detail, one can observe that the increase in the leader's cost of effort negatively impacts both her and the follower's value in any equilibrium concepts. This is a logical outcome, since if the leader's effort cost is higher, she will exert less effort, which negatively impacts the terminal value of the output process for both players, in every scenario.

\smallskip

\begin{figure}[!h]
	\centering
	\begin{subfigure}{.49\textwidth}
		\centering
		\includegraphics[scale=0.47]{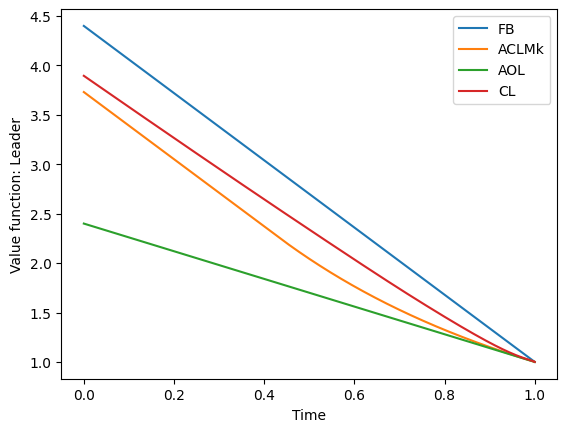}
		\centering
		\caption{\small Leader's value function.}
		\label{fig3:value_leader}
	\end{subfigure}
	\begin{subfigure}{.49\textwidth}
		\includegraphics[scale=0.47]{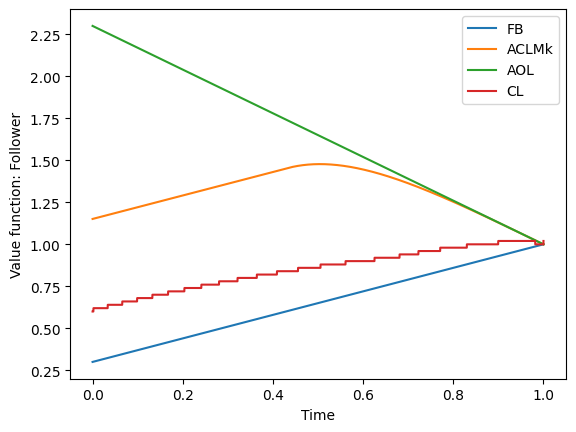}
		\centering
		\caption{\small Follower's value function.}
		\label{fig3:value_follower}
	\end{subfigure}
	\caption{\small Comparison of the value functions, for $c_{\rm F} = 1$, $c_{\rm L} = 1.25$, and $a_\circ = 10$.}
	\label{fig3:numerics}
\end{figure}

Comparing now \Cref{fig1:numerics} with \Cref{fig2:numerics}, we can observe that when the follower's cost of effort slightly increases, it also negatively impacts both his and the leader's value for almost all concepts of equilibrium, for the same reason as above, except in the ACL/first-best case. Indeed, in this scenario, the leader's value function remains unchanged, as the follower will always exert the maximal effort $b_\circ$. Therefore, only the follower's value is impacted by the increase in his cost.

\begin{figure}[!h]
	\centering
	\begin{subfigure}{.49\textwidth}
		\centering
		\includegraphics[scale=0.47]{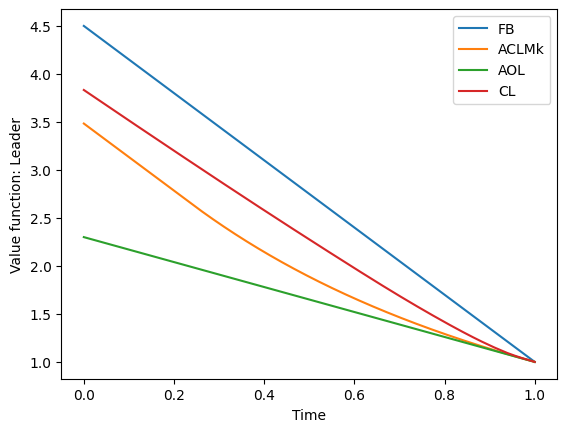}
		\centering
		\caption{\small Leader's value function.}
		\label{fig2:value_leader}
	\end{subfigure}
	\begin{subfigure}{.49\textwidth}
		\includegraphics[scale=0.47]{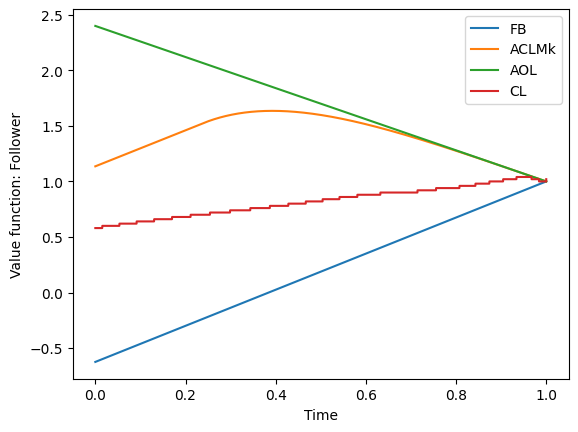}
		\centering
		\caption{\small Follower's value function.}
		\label{fig2:value_follower}
	\end{subfigure}
	\caption{\small Comparison of the value functions, for $c_{\rm F} = 1.25$, $c_{\rm L} = 1$, and $a_\circ = 10$.}
	\label{fig2:numerics}
\end{figure} 

\begin{figure}[!h]
	\centering
	\begin{subfigure}{.49\textwidth}
		\centering
		\includegraphics[scale=0.47]{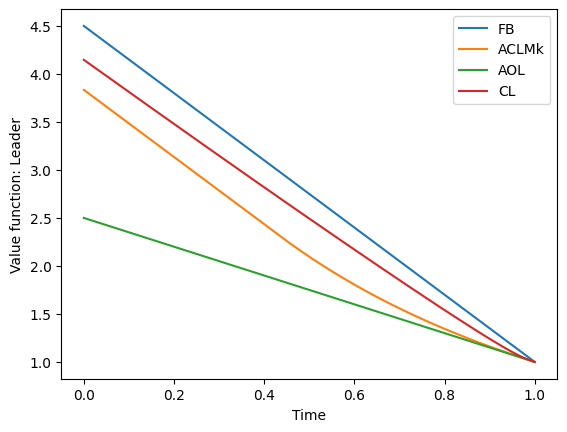}
		\centering
		\caption{\small Leader's value function.}
		\label{fig4:value_leader}
	\end{subfigure}
	\begin{subfigure}{.49\textwidth}
		\includegraphics[scale=0.47]{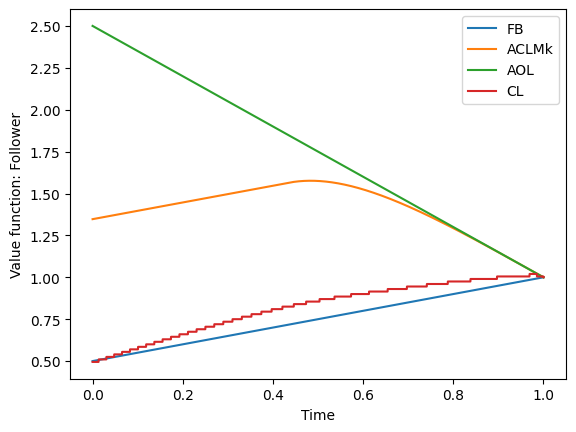}
		\centering
		\caption{\small Follower's value function.}
		\label{fig4:value_follower}
	\end{subfigure}
	\caption{\small Comparison of the value functions, for  $c_{\rm F} = 1$, $c_{\rm L} = 1$, and $a_\circ = 15$.}
	\label{fig4:numerics}
\end{figure}

Finally, comparing \Cref{fig4:numerics} with the benchmark in \Cref{fig1:numerics}, one can notice that increasing the parameter $a_\circ$, representing the maximum absolute value of the leader's effort, will only impact the values in the ACLM-$\bar K$ and CL cases. Indeed, in the AOL and AF cases, the leader will always exert the constant effort $1/c_{\rm L}$, independently of $a_\circ$. Similarly, in the ACL and FB scenarios, the leader will still be able to force the follower to exert the maximal effort $b_\circ$.
However, in the closed-loop equilibrium, when $a_\circ$ increases, the leader has more bargaining power to incentivise the follower to exert a higher effort. More precisely, when studying the partial differential equations satisfied by the boundaries $w^\pm$, one can notice that if $a_\circ$ increases, the cone formed by the boundaries becomes larger. The leader should still ensure that the target constraint is satisfied, and therefore set the control $Z$ to $1$ when one of the barriers is hit, but as the cone is wider this constraint becomes less restrictive. Intuitively, if the set $A$ was not bounded, the boundaries $w^-$ and $w^+$ would be at $-\infty$ and $+\infty$ respectively, leading to an unconstrained problem for the leader. With this in mind, the limit of the leader's value when $a_\circ$ goes to infinity should coincide with her value in the first-best case. In other words, the higher $a_\circ$, the longer the leader can force the follower to exert the maximal effort $b_\circ$ instead of his optimal effort $1/c_{\rm F}$. The same reasoning holds for the ACLM-$\bar K$ scenario: if $a_\circ$ is larger, the parameter $\bar K$ is also larger, and the leader can therefore force the follower to exert the maximal effort $b_\circ$ during a longer period. Similarly, if $a_\circ$ goes to infinity, then $\bar K$ would also go to infinity, meaning that the follower would be forced to apply the maximal effort $b_\circ$ during the entire game.

\section{General problem formulation}\label{sec:general_formulation}

Let $T>0$, $\Omega\coloneqq \Cc([0,T];\R^d)$, topologised by uniform convergence, and $X$ be the canonical process on $\Omega$, that is
\[
X_t(x) \coloneqq x(t), \; x\in\Omega, \; t\in[0,T].
\] 
We denote by $\F=(\Fc_t)_{t\geq 0}$ the filtration generated by $X$, \emph{i.e.} $\Fc_t \coloneqq  \Fc_t^X$, $t\in[0,T]$. The process $X$ represents the output of the game, which will be controlled in weak formulation by both the leader and the follower.

\smallskip

Let $\mathbf{M}(\Omega)$ be the set of all probability measures on $(\Omega,\Fc_T)$. 
$\mathbb P\in \mathbf{M}(\Omega)$ is said to be a semi-martingale measure if $X$ is an $(\F,\P)$--semi-martingale. 
We denote by $\mathcal{P}_S$ the set of all semi-martingale measures. 
By \citeauthor*{karandikar1995pathwise} \cite{karandikar1995pathwise}, there exists an $\mathbb F$--progressively measurable process denoted by $[ X]\coloneqq ([ X]_t)_{t\in [0,T]}$ coinciding with the quadratic variation of $X$, $\mathbb P\as$, for any $\P\in\Pc_S$. 
The density with respect to the Lebesgue measure is denoted by a non-negative symmetric matrix $\widehat{\sigma}^2_t\in\S^{d}$ defined by
\begin{equation}\label{eq:sigmahat}
\widehat{\sigma}^2_t\coloneqq  \underset{\varepsilon \searrow 0}{\rm{lim sup}} \bigg\{ \frac{[ X]_t- [X]_{t-\varepsilon}}{\varepsilon}\bigg\}, \; t\in[0,T].
\end{equation}
The so-called universal filtration $\mathbb{F}^U \coloneqq ( \mathcal{F}^U _t)_{0\leq t \leq T} $ is given by \(\mathcal{F}^U _t \coloneqq  \bigcap_{\mathbb{P}\in\mathbf{M}(\Omega)}\mathcal{F}_t^{\mathbb{P}}\), where $\mathcal{F}_t^{\mathbb{P}}$ is the usual $\P$-completion of $\Fc_t$. For any subset $\mathcal{P}\subseteq\mathbf{M}(\Omega)$, letting $\Nc^{\Pc}$ denote the collection of $\Pc$-polar sets, \emph{i.e.} the sets which are $\mathbb{P}$-negligible for all $\mathbb{P}\in \Pc$, we define the filtration $\mathbb{F}^{\mathcal{P}}\coloneqq ( \mathcal{F}^\mathcal{P}_t )_{t\in [0,T]}$, defined by \(\mathcal{F}^{\mathcal{P}}_t\coloneqq \mathcal{F}_{t}^\Pc \vee \Nc^\mathcal{P},\ t\in[0, T]\).

\subsection{Controlled state dynamics}\label{sec.stateprocessform}

Given finite-dimensional Euclidian spaces $A$ and $B$, we describe the state process by means of the coefficients   
\[
\sigma: [0,T]\times \Omega \times  A \times B \longrightarrow \R^{d\times n},\; \text{and}\; \lambda:[0,T]\times \Omega \times A\times B \longrightarrow\R^n,\]
assumed to be Borel-measurable and non-anticipative, in the sense that, for $\varphi\in\{\sigma,\lambda\}$, $\varphi_t(x,a,b)=\varphi_t(x_{\cdot\wedge t},a,b)$, $(t,x,a,b)\in [0,T]\times\Omega\times A\times B$. Since the product $\sigma\lambda$ will appear often, we abuse notations and write $\sigma\lambda_t(x,a,b)\coloneqq \sigma_t(x,a,b)\lambda_t(x,a,b)$, for all $(t,x,a,b)\in [0,T]\times\Omega\times A\times B$. These functions satisfy the following conditions, which we comment upon in \Cref{rmk.dynamics}.

\begin{assumption}\label{Assumption.data}
	\begin{enumerate}[label=$(\roman*)$,itemsep=5pt]
		\item The map $\Omega \ni x\longmapsto \sigma_t(\cdot,a,b)$ is continuous for every $(t,a,b)\in [0,T]\times A \times B$, and, in addition for every $x \in \Omega$,  $\sigma\sigma^\top_t(x,a,b)\coloneqq\sigma_t(x,a,b)\sigma^\top_t(x,a,b)$ is invertible. Moreover, there is $\ell_\sigma>0$ such that $|\sigma_t(x,a,b)|+|(\sigma\sigma^\top)^{-1}(t,x,a,b)|\leq \ell_\sigma$ for every $(t,x,a,b)\in  [0,T]\times \Omega \times  A \times B$.
		\item There is $\ell_\lambda>0$ such that $|\lambda_t(x,a,b)|\leq \ell_\lambda$, for every $(t,x,a,b)\in  [0,T]\times \Omega \times  A \times B$. 
	\end{enumerate}
\end{assumption}

The actions of the leader are valued in $A$, and the actions of the follower are valued in $B$. 
We define the sets of controls $\Ac$ and $\Bc$ as the ones containing the $\F$-predictable processes with values in $A$ and $B$, respectively. Let $x_0\in\R^d$, then for $(\alpha,\beta)\in\Ac\times\Bc$, the controlled state equation is given by the SDE
\begin{equation}\label{eq:X-dynamics-with-drift}
	X_t = x_0 + \int_0^t \sigma \lambda_s(X_{\cdot\wedge s},\alpha_s,\beta_s) \mathrm{d}s + \int_0^t \sigma_s(X_{\cdot\wedge s},\alpha_s,\beta_s) \mathrm{d}W_s, \; t\in[0,T],
\end{equation}
where $W$ denotes an $n$-dimensional Brownian motion. We characterise \eqref{eq:X-dynamics-with-drift} in terms of weak solutions. These are elegantly represented in terms of so-called martingale problems and Girsanov's theorem, see \citeauthor*{stroock1997multidimensional} \cite{stroock1997multidimensional} for details. Indeed, let us consider the SDE
\begin{align}\label{eq:X-dynamics-without-drift}
	X_t = x_0  + \int_0^t \sigma_s(X_{\cdot\wedge s},\alpha_s,\beta_s) \mathrm{d}W_s, \; t\in[0,T],
\end{align}
and denote by $\Pc$ the set of weak solutions to \eqref{eq:X-dynamics-without-drift}. This is
\begin{align*}
	\Pc\coloneqq \{ \P\in {\bf M}(\Omega): &\ \exists W^\P, \;\bmdim\text{-dimensional } \P\text{--Brownian motion},\\ \text{and}\; &\ (\alpha,\beta)\in \Ac\times\Bc\text{ for which }\eqref{eq:X-dynamics-without-drift}\text{ holds }\P\as\}.
\end{align*}

By Girsanov's theorem, any $\P\in \Pc$ induces $\bar\P \in {\bf M}(\Omega)$ weak solution to \eqref{eq:X-dynamics-with-drift}, where $\bar \P$ is defined by
\begin{align}\label{eq.girsanov}
	\frac{ \mathrm{d}\bar \P}{\mathrm{d}\P}\coloneqq  \exp\bigg( \int_0^T \lambda_s(X_{\cdot\wedge s},\alpha_s,\beta_s)\cdot \mathrm{d}W_s^\P - \frac{1}{2}\int_0^T \|\lambda_s(X_{\cdot\wedge s},\alpha_s,\beta_s)\|^2 \mathrm{d}s  \bigg).
\end{align}
For any action $\alpha\in\Ac$ of the leader, we define the set $\Rc(\alpha)$ of admissible responses of the follower by
\[
\Rc(\alpha)  \coloneqq  \{ (\P,\beta) \in  \Pc \times\Bc:  \P \text{ unique measure in } \Pc,\;  \eqref{eq:X-dynamics-without-drift}\text{ holds }\P\as\text{ with }(\alpha,\beta) \},
\] 
as well as the set of weak solutions 
\begin{align*}
	\Pc^\alpha  \coloneqq  \{ \P\in \Pc:\eqref{eq:X-dynamics-without-drift}\text{ holds }\P\as\text{ with }(\alpha,\beta), \text{ for some }\beta\in \Rc(\alpha)\}.
\end{align*}
\begin{remark}\label{rmk.dynamics}
	\begin{enumerate}[label=$(\roman*)$, ref=.$(\roman*)$,wide,  labelindent=0pt]
		\item We note that $\Pc$ is nonempty due to the continuity assumption on $\sigma$, ensuring that solutions do exist for instance for constant controls $\alpha$ and $\beta$, see {\rm \cite[Theorem 6.1.6]{stroock1997multidimensional}}. 
		{Concerning the uniqueness of weak solutions, we impose it as a condition for the admissible controls of the follower. 
			That is, for a pair $(\alpha,\beta)$ of controls played by the leader and the follower, the law of $X$ is uniquely determined.}
		
		\item {We also stress that in the above formulation, there is no need to enlarge the canonical space.
			This subtlety is significant in the context of Stackelberg games, as doing so would mean changing the information structure of the game.
			Indeed, we note that in the definition of $\Pc$, $W^\P$ is a Brownian motion in the original canonical space $\Omega$. 
			Given our assumptions on the volatility $\sigma\sigma^\top$, namely its invertibility and boundedness, we do not need to enlarge $\Omega$ in this setting. 
			In general, if the volatility is allowed to degenerate, one may need to introduce external sources of randomness and define a Brownian motion on an enlarged probability space. 
			We refer the reader to {\rm \cite[Section 4.5]{stroock1997multidimensional}} and {\rm \cite[Section 2.1.2]{possamai2018stochastic}} for a discussion on these results. 
		} 
	\end{enumerate}
\end{remark}

\subsection{The closed-loop Stackelberg game between the leader and the follower}
\textcolor{black}{The rewards of the players are specified through the mappings
	\[
	c : [0,T]\times \Omega \times  A \times B \longrightarrow \R,\; g:\Omega \longrightarrow\R,\;  C:[0,T]\times \Omega \times A\times B \longrightarrow\R,\;\text{and}\; G:\Omega \longrightarrow\R,
	\]
	assumed to be Borel-measurable and non-anticipative. They satisfy the next assumption which we comment on in \Cref{rem:nonuniq}.}
\begin{assumption}\label{Assumption.data.2}
	{There is $\ell_\textup{r}>0$ such that $|c_t(x,a,b)|+|C_t(x,a,b)|+|g(x)|+|G(x)|\leq \ell_\textup{r}$ for all $(t,x,a,b)\in  [0,T]\times \Omega \times  A \times B$.}
\end{assumption}

The timing of the game is as follows. The leader chooses first a control $\alpha\in\Ac$ to which the follower responds with $\beta\in\Bc$. 
The response is, of course, dependent on the control chosen by the leader. 
Given an action $\alpha\in\Ac$, the problem of the follower is given by 
\begin{equation}\label{eq:problem-follower}
	V_{\rm F}(\alpha) \coloneqq  \sup_{(\P,\beta)\in\Rc(\alpha)} \bigg\{ \E^{\bar\P} \bigg[ \int_0^T c_s(X_{\cdot\wedge s},\alpha_s,\beta_s) \mathrm{d}s + g(X_{\cdot\wedge T}) \bigg]\bigg\}.
\end{equation}

We say that $(\P,\beta)\in\Rc(\alpha)$ is an \emph{optimal response} to $\alpha\in\Ac$, and write $(\P,\beta)\in\Rc^\star(\alpha)$, if $(\P,\beta)$ is a solution to Problem \eqref{eq:problem-follower}. We will assume that there exists $\alpha^o\in\Ac$ such that $\Rc^\star(\alpha^o)\neq \emptyset$. Then, the leader chooses a control from the set $\Ac$ and anticipates the optimal response of the follower. Therefore, the leader faces the following problem
\begin{equation}\label{eq:problem-leader}
	V_{\rm L} \coloneqq  \sup_{\alpha\in\Ac} \sup_{(\P,\beta)\in\Rc^{\smallfont{\star}}(\alpha)}  \bigg\{ \E^{{\bar \P}} \bigg[   \int_0^T C_s(X_{\cdot\wedge s},\alpha_s,\beta_s) \d s + G(X_{\cdot \wedge T}) \bigg]\bigg\}, 
\end{equation}

\begin{remark}\label{rem:nonuniq}
	\begin{enumerate}[label=$(\roman*)$, ref=.$(\roman*)$,wide,  labelindent=0pt]
		\item We assume that the functions in our model are bounded just to simplify the expositions of the results. 
		These assumptions can be weakened by imposing the usual integrability conditions in the set of admissible controls of the players. 
		The results in this section and in {\rm \Cref{sec:stochastic_target}} will still hold. 
		The analysis becomes more delicate when studying the so-called target reachability set, defined in {\rm \Cref{sec:solve_leader}}, through its upper and lower boundaries, and to characterise them by our methods.
		\item Notice that under our convention that the supremum over an empty set is equal to $-\infty$, the leader will never choose $\alpha$ such that $\Rc^\star(\alpha)=\emptyset$. Thus, the assumption on the existence of $\alpha^o\in\Ac$ such that $\Rc^\star(\alpha^o)\neq \emptyset$ guarantees that the problem of the leader is not degenerate. This is a relatively harmless assumption, in the sense that the whole problem is ill-posed otherwise. Nonetheless, it is straightforward to check, for instance when the follower only controls the drift of $X$, that under standard integrability assumptions, any constant choice for $\alpha^o$ satisfies our requirements.
		\item \label{rem:nonuniq.3} Let us mention that the existence of optimal responses is fundamental for Stackelberg games and cannot be dropped. 
		Indeed, the main motivation in this game is that the leader plays first by anticipating the response of the follower.
		On the other hand, {we assume that the leader has enough bargaining power to make the follower choose a maximiser that suits her best, or equivalently, we consider the problem of an \emph{optimistic} leader for whom}, if the follower has multiple optimal responses---and thus he is indifferent among all of them---he will choose one that benefits the leader the most.
		This is consistent with, for instance, {\rm\cite[Section 2.1]{bressan2011noncooperative}}, {\rm\cite{zemkoho2016solving}}, or {\rm \cite{havrylenko2022risk}}. 	Alternatively, one could take an adversarial perspective in which the leader faces the problem
		\begin{equation*}
			V_{\rm L}^{\text{\scalefont{0.8}\textup{Pes}}} \coloneqq  \sup_{\alpha\in\Ac}  \inf_{(\P,\beta)\in\Rc^\smallfont{\star}(\alpha)}  \bigg\{\E^{{\bar \P}} \bigg[   \int_0^T C_s(X_{\cdot\wedge s},\alpha_s,\beta_s) \d s + G(X_{\cdot \wedge T}) \bigg]\bigg\}.
		\end{equation*}	
		
		This is the \emph{pessimistic} point of view, which has also been coined \emph{generalised} or \emph{weak} Stackelberg equilibrium, see {\rm \cite{leitmann1978generalized}}, {\rm \cite{basar1999dynamic}}, {\rm\cite{wiesemann2013pessimistic}}, or {\rm\cite{liu2018pessimistic}}. Notice that in this case, existence of equilibria may become problematic, which led part of the literature to consider so-called \emph{regularised} Stackelberg problems, where, for a fixed $\eps>0$, the infimum would now be taken over the set of actions of the follower which give him a value $\eps$-close to his optimal one, see {\rm\cite[Section 3]{mallozzi1995weak}} and the references therein. {We point out that our approach allows to tackle both the \emph{optimistic} and the \emph{pessimistic} problems in the same way, the difference being in the resulting Hamiltonians of the {\rm HJB} equations associated to each one of the two problems. More details will be given below.} 
	\end{enumerate}
\end{remark}

\section{Reduction to a target control problem}\label{sec:stochastic_target}

In this section, we characterise the solutions $(\P^\star,\beta^\star)\in \Rc^\star(\alpha)$ to the continuous-time stochastic control problem \eqref{eq:problem-follower} for fixed leader's control $\alpha\in\Ac$. 
Our approach is inspired by the dynamic programming approach to principal--agent problems developed in \cite{cvitanic2018dynamic}. 

\smallskip

As standard in the control literature, we introduce the following Hamiltonian functions $H^{\rm F}:[0,T]\times\Omega\times\R^d\times\S^d \times A \longrightarrow\R$ and $h^{\rm F}:[0,T]\times\Omega\times\R^d\times\S^d \times A\times B \longrightarrow\R$
\begin{align}\label{eq.hamiltonian}
	H^{\rm F}_t(x,z,\gamma,a) &\coloneqq  \sup_{b\in B}\big\{ h _t^{\rm F}(x,z,\gamma,a,b)\big\},\\ \text{and} \;
	h^{\rm F}_t(x,z,\gamma,a,b) &\coloneqq  c_t(x,a,b) + \sigma\lambda_t(x,a,b)\cdot z + \frac12 \mathrm{Tr}[\sigma\sigma_t^\t(x,a,b)\gamma]. \nonumber
\end{align}
Define now the set $A_t(x,\Sigma,a)\coloneqq \big\{ b\in B:  \sigma\sigma_t^\t(x,a,b) = \Sigma \big\}$ for $(t,x,\Sigma,a)\in [0,T]\times\Omega\times \S^d_+\times A $. For $(\alpha,\P)\in \Ac\times \Pc^\alpha$, the set of controls for the follower is
\[
\Bc(\alpha,\P)\coloneqq  \{ \beta\in\Bc: \beta_t\in A_t(x,\sigmah_t^2,\alpha_t), \; \mathrm{d}t\otimes\mathbb{P}\ae \},
\]
where $\sigmah_t^2$ has been defined in \Cref{eq:sigmahat}.
With these definitions, we can isolate the partial maximisation with respect to the squared diffusion in $H^{\rm F}$. More precisely, letting $F:[0,T]\times\Omega\times\R^d\times \S^d_+  \times A\longrightarrow\R$, be given by
\[
F_t(x,z,\Sigma,a) \coloneqq  \sup_{b\in A_\smallfont{t}(x,\Sigma,a)} \big\{ c_t(x,a,b)+ \sigma \lambda_t(x,a,b) \cdot z\},
\]
we have that $2H^{\rm F}=(-2F)^\ast$ where the superscript $\ast$ denotes the Legendre transform
\[
H^{\rm F}_t(x,z,\gamma,a) = \sup_{\Sigma\in\S_\smallfont{d}^\smallfont{+}} \bigg\{  F_t(x,z,\Sigma,a) +\frac12 \text{Tr}[\Sigma\gamma]\bigg\}.
\]

Recalling \eqref{eq.girsanov}, we can equivalently write the problem of the follower \eqref{eq:problem-follower} as
\begin{align}\label{eq.prob.follower.separated}
	V_{\rm F}(\alpha) = \sup_{\P\in\Pc^\smallfont{\alpha}} \sup_{\beta \in\Bc(\alpha,\P)}  \bigg\{\E^{\bar\P} \bigg[ \int_0^T c_s(X_{\cdot\wedge s},\alpha_s,\beta_s) \mathrm{d}s + g(X_{\cdot\wedge T}) \bigg]\bigg\},
\end{align}
to which we will associate the second-order BSDE\footnote{We refer the reader to \cite{possamai2018stochastic,soner2012wellposedness} for an introduction and extension of the theory of such equations.}
\begin{equation}\label{eq:2bsde-follower}
	Y_t = g(X_{\cdot\wedge T}) + \int_t^T F_s(X_{\cdot\wedge s},Z_s, \sigmah_s^2,\alpha_s) \mathrm{d}s - \int_t^T Z_s \cdot \mathrm{d}X_s +\int_t^T\mathrm{d}K_s,\;  \Pc^\alpha\qs,
\end{equation}
for $ t\in[0,T]$. {Notice that, similarly to \cite{cvitanic2018dynamic}, we consider an aggregated version of the non-decreasing process $K$}.\footnote{We require the aggregation of the component $K$, as well as the one of the stochastic integral, to define later the forward process $Y^{y,Z,\Gamma,\alpha}$ independently of any probability. There are aggregation results for the stochastic integral in \cite{nutz2012pathwise}, which suit our setting and use the notion of medial limits. By following this route, one would need to assume ZFC plus some other axioms. We refer the reader to \cite[Footnote 7]{possamai2018stochastic} for a further discussion on the weakest set of axioms known to be sufficient for the existence of medial limits.}  We have then the following notion of solution to the 2BSDE, the functional spaces mentioned in the following definition can be found in \Cref{sec:funcspace}. We also use the notation
\[
\Pc^\alpha[\mathbb P,\mathbb F^+,t] \coloneqq  \{  \P^\prime \in \Pc^\alpha: \P[E] = \P^\prime[E],\; \forall E \in\Fc_t^+  \}.
\]

\begin{definition}\label{def:sol2BSDE} We say that the triple {$(Y,Z,K)$} is a solution to the {\rm 2BSDE} \eqref{eq:2bsde-follower} if there exists $p>1$ such that {$(Y,Z,K)\in\mathbb S^p(\mathbb F^{\Pc^\smallfont{\alpha}},\Pc^\alpha) \times  \mathbb H^p(\mathbb F^{\Pc^\smallfont{\alpha}},\Pc^\alpha) \times  \mathbb I^p(\mathbb F^{\Pc^\smallfont{\alpha}},\Pc^\alpha) $}  satisfies \eqref{eq:2bsde-follower} and $K$ satisfies the minimality condition
	{\begin{equation}\label{eq:minimality condition}
			K_t = \essinf_{\mathbb P'\in \Pc^\smallfont{\alpha}[\mathbb P,\mathbb F^\smallfont{+},t]}\big\{\E^{\mathbb P^\smallfont{\prime}} \big[K_T \big| \mathcal F_t^{\mathbb P,+} \big]\big\},\; t\in [0,T],\; \Pc^\alpha\qs
	\end{equation} }
\end{definition}

The next result connects the problem of the follower with the 2BSDE \eqref{eq:2bsde-follower}. 

\begin{proposition}\label{prop:main-follower}
	There exists a unique solution {$(Y,Z,K)$} to the {\rm 2BSDE} \eqref{eq:2bsde-follower}, for which the value of the follower satisfies $V_{\rm F}(\alpha)=\sup_{\P\in\Pc^\smallfont{\alpha}} \{\E^{\bar \P}[Y_0]\}$. Moreover, $(\P^\star,\beta^\star)\in \Rc^\star(\alpha)$ if and only if {$K_T=0$}, $\P^\star\as$ and
	\begin{equation}\label{eq:optimal-effort-follower}
		\beta^\star \;\text{\rm is a maximiser in the definition of } F_\cdot(X_\cdot,Z_\cdot,\sigmah^2_\cdot,\alpha_\cdot), \; \mathrm{d}t \otimes \mathrm{d}\P^\star\ae
	\end{equation}
\end{proposition}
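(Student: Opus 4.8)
The plan is to produce the triple $(Y,Z,K)$ from the general well-posedness theory for second-order BSDEs, then to identify $Y_0$ with $V_{\rm F}(\alpha)$ through a drift-control representation under each fixed $\P\in\Pc^\alpha$, and finally to read off the characterisation of $\Rc^\star(\alpha)$ from the 2BSDE equation by a martingale-optimality argument. For \textbf{existence and uniqueness}, I would check that the data of \eqref{eq:2bsde-follower} fit the framework of \citean{possamai2018stochastic}, equivalently the principal--agent formulation of \citean{cvitanic2018dynamic} on which this approach is modelled: the terminal datum $g(X_{\cdot\wedge T})$ is bounded by \Cref{Assumption.data.2}; the generator $(t,x,z,\Sigma)\longmapsto F_t(x,z,\Sigma,\alpha_t)$ is non-anticipative and Borel --- the inner supremum over $b\in A_t(x,\Sigma,a)$ being handled by a measurable selection argument --- affine, hence globally Lipschitz, in $z$ with constant controlled by $\ell_\sigma\ell_\lambda$, and uniformly bounded on the effective range of $\Sigma$ by \Cref{Assumption.data}; and the family $\Pc^\alpha$ is stable under conditioning and concatenation, which is precisely where the \emph{uniqueness} clause in the definition of $\Rc(\alpha)$ enters. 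Under these hypotheses \cite{possamai2018stochastic} yields a unique $(Y,Z,K)\in\mathbb{S}^p\times\mathbb{H}^p\times\mathbb{I}^p$, for some $p>1$, solving \eqref{eq:2bsde-follower} together with the minimality condition \eqref{eq:minimality condition}. I expect this verification --- in particular the measurable selection for $F$ and the stability of $\Pc^\alpha$ --- to be the main obstacle; what follows is comparatively routine.

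For the \textbf{value of the follower}, I start from the reformulation \eqref{eq.prob.follower.separated}. For a \emph{fixed} $\P\in\Pc^\alpha$, the inner problem $\sup_{\beta\in\Bc(\alpha,\P)}\E^{\bar\P}\big[\int_0^Tc_s(X_{\cdot\wedge s},\alpha_s,\beta_s)\,\mathrm{d}s+g(X_{\cdot\wedge T})\big]$ is a pure drift-control problem: under $\bar\P$ the quadratic variation of $X$ is still $\int_0^\cdot\sigmah_s^2\,\mathrm{d}s$, only the drift $\sigma\lambda_s(X_{\cdot\wedge s},\alpha_s,\beta_s)$ being affected through Girsanov, and the Hamiltonian over the $\P$-compatible responses $\beta\in\Bc(\alpha,\P)$ is exactly $z\longmapsto F_s(X_{\cdot\wedge s},z,\sigmah_s^2,\alpha_s)$. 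Hence, by the classical BSDE characterisation of weak-formulation drift control, this inner value equals $\E^{\bar\P}[\mathcal{Y}_0^\P]$, where $\mathcal{Y}^\P$ solves the fixed-$\P$ BSDE with generator $F_s(X_{\cdot\wedge s},\cdot,\sigmah_s^2,\alpha_s)$ and terminal condition $g(X_{\cdot\wedge T})$. Taking the supremum over $\P\in\Pc^\alpha$ and invoking the representation part of the 2BSDE theory --- where \eqref{eq:minimality condition} is what upgrades an inequality into an identity --- gives $V_{\rm F}(\alpha)=\sup_{\P\in\Pc^\alpha}\{\E^{\bar\P}[Y_0]\}$. Moreover, since every $\P\in\Pc$ makes $X_0=x_0$ almost surely, $\Fc_0$ is trivial under each such measure; hence $Y_0$ is almost surely constant under every $\P\in\Pc^\alpha$, with common value $V_{\rm F}(\alpha)$, a fact used below.

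For the \textbf{characterisation of $\Rc^\star(\alpha)$}, fix $(\P^\star,\beta^\star)\in\Rc(\alpha)$, so in particular $\P^\star\in\Pc^\alpha$. As $\P^\star$ is \emph{the} weak solution associated with $(\alpha,\beta^\star)$, the quadratic-variation density satisfies $\sigmah_s^2=\sigma\sigma_s^\t(X_{\cdot\wedge s},\alpha_s,\beta^\star_s)$, $\mathrm{d}s\otimes\P^\star$\ae; so $\beta^\star\in\Bc(\alpha,\P^\star)$, and therefore $F_s(X_{\cdot\wedge s},Z_s,\sigmah_s^2,\alpha_s)\geq c_s(X_{\cdot\wedge s},\alpha_s,\beta^\star_s)+\sigma\lambda_s(X_{\cdot\wedge s},\alpha_s,\beta^\star_s)\cdot Z_s$, $\mathrm{d}s\otimes\P^\star$\ae\ Evaluating \eqref{eq:2bsde-follower} at $t=0$ (with $K_0=0$), rewriting $\mathrm{d}X_s$ under $\bar\P^\star$ through Girsanov as $\sigma\lambda_s(X_{\cdot\wedge s},\alpha_s,\beta^\star_s)\,\mathrm{d}s$ plus a $\bar\P^\star$-martingale increment --- a true martingale by the integrability of $Z$ and the boundedness of $\lambda$ --- and taking $\E^{\bar\P^\star}$, one gets (using the previous step to write $\E^{\bar\P^\star}[Y_0]=V_{\rm F}(\alpha)$)
\begin{align*}
V_{\rm F}(\alpha)=\E^{\bar\P^\star}[Y_0]&=\E^{\bar\P^\star}\bigg[g(X_{\cdot\wedge T})+\int_0^T\big(F_s(X_{\cdot\wedge s},Z_s,\sigmah_s^2,\alpha_s)-\sigma\lambda_s(X_{\cdot\wedge s},\alpha_s,\beta^\star_s)\cdot Z_s\big)\,\mathrm{d}s+K_T\bigg]\\
&\geq\E^{\bar\P^\star}\bigg[g(X_{\cdot\wedge T})+\int_0^Tc_s(X_{\cdot\wedge s},\alpha_s,\beta^\star_s)\,\mathrm{d}s\bigg],
\end{align*}
using $K_T\geq K_0=0$ and the pointwise inequality above, the right-hand side being the follower's payoff under $(\P^\star,\beta^\star)$, always $\leq V_{\rm F}(\alpha)$. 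If $(\P^\star,\beta^\star)\in\Rc^\star(\alpha)$, this payoff equals $V_{\rm F}(\alpha)$, so every inequality is an equality: $\E^{\bar\P^\star}[K_T]=0$, whence $K_T=0$, $\P^\star$\as, since $K$ is non-decreasing from $0$; and the inequality between $F_s(X_{\cdot\wedge s},Z_s,\sigmah_s^2,\alpha_s)$ and $c_s(X_{\cdot\wedge s},\alpha_s,\beta^\star_s)+\sigma\lambda_s(X_{\cdot\wedge s},\alpha_s,\beta^\star_s)\cdot Z_s$ holds with equality $\mathrm{d}s\otimes\P^\star$\ae, which is exactly \eqref{eq:optimal-effort-follower}. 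Conversely, if $K_T=0$, $\P^\star$\as\ (hence $K\equiv0$, $\P^\star$\as), and \eqref{eq:optimal-effort-follower} holds, then both displayed inequalities are saturated, so $\E^{\bar\P^\star}[Y_0]$ equals the follower's payoff under $(\P^\star,\beta^\star)$; combined with $\E^{\bar\P^\star}[Y_0]=V_{\rm F}(\alpha)$ from the previous step, this shows $(\P^\star,\beta^\star)$ is an optimal response, i.e. $(\P^\star,\beta^\star)\in\Rc^\star(\alpha)$.
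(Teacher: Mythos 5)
Your argument is correct and follows essentially the same route as the paper, which simply observes that the follower's problem is the agent's problem with bounded terminal payment $\xi=g(X_{\cdot\wedge T})$ and cites \cite[Propositions 4.5 and 4.6]{cvitanic2018dynamic} directly; what you have written is a faithful unpacking of the proof of those two propositions (2BSDE well-posedness for a Lipschitz-in-$z$ generator with bounded data, the fixed-$\P$ drift-control BSDE representation upgraded via the minimality condition, and the martingale-optimality verification for the equivalence). The only cosmetic slip is calling $F$ ``affine'' in $z$ --- it is a supremum of affine maps, hence convex and globally Lipschitz, which is all that is needed.
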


\begin{proof}
	Notice that the follower's problem can be seen as the particular problem of an agent who is offered by the principal a terminal remuneration of the form $\xi=g( X_{\cdot \wedge T})$. Since the function $g$ is assumed to be bounded, the result is a direct application of \cite[Propositions 4.5 and 4.6]{cvitanic2018dynamic}. 
\end{proof}

For $p>1$, $(y,\alpha,Z,K)\in \Ac\times\R \times  \mathbb H^p(\mathbb F^{\Pc^\smallfont{\alpha}}, \Pc^\alpha) \times  \mathbb I^p (\mathbb F^{\Pc^\smallfont{\alpha}}, \Pc^\alpha)$, $K$ satisfying \eqref{eq:minimality condition}, the process $Y^{y,\alpha,Z,K}$, given by
\begin{equation*}\label{eq:Y-ZK}
	Y^{y,\alpha,Z,K}_t\coloneqq  y-\int_0^t F_s(X_{\cdot\wedge s},Z_s,\sigmah_s^2,\alpha_s) \mathrm{d}s+\int_0^tZ_s\cdot \mathrm{d}X_s-\int_0^t \mathrm{d}K_s, \; t\in [0,T],
\end{equation*}
is well-defined, independent of the probability $\mathbb{P}$, because the stochastic integrals can be defined pathwise (see \cite[Definition 3.2]{cvitanic2018dynamic} and the paragraph thereafter). 
The idea is to look at the tuples $(y,\alpha,Z,K)$ for which it holds that $Y^{y,\alpha,Z,K}_T= g(X_{\cdot\wedge T})$. 
Furthermore, as argued in \cite[Theorem 3.6]{cvitanic2018dynamic}, the processes $K$ can be approximated by those of the form 
\begin{equation*}
	\int_0^t \bigg(    H^{\rm F}_s(X_{\cdot\wedge s},Z_s,\Gamma_s,\alpha_s) -F_s(X_{\cdot\wedge s},Z_s,\sigmah^2_s,\alpha_s) -\frac12\text{Tr} [ \sigmah^2_s \Gamma_s\big] \bigg) \mathrm{d}s,
\end{equation*}
for some appropriate control $\Gamma$. However, as recently highlighted by \cite{chiusolo2024new}, the above representation may not always be possible if the following assumption is not satisfied.
\begin{assumption}\label{ass:duality}
	For all $(t,x,z,\Sigma,a) \in [0,T] \times \Omega \times \R^d \times \S^d_+ \times A$, we have
	\begin{align*}
		F_t(x,z,\Sigma,a) = \min_{\gamma \in \S^d} \bigg\{H^{\rm F}_t(x,z,\gamma,a) - \dfrac12 {\rm Tr}  [\Sigma \gamma] \bigg\}.
	\end{align*}
\end{assumption}
Under the previous assumption, \cite[Theorem 3.6]{cvitanic2018dynamic} is verified and it is then appropriate to define the following class of processes that would be seen as controls from the point of view of the leader.
\begin{definition}\label{def.parametrization}
	For any $\alpha\in\Ac$, let ${\Cc^\alpha}$ be the class of $\F^{\Pc^\smallfont{\alpha}}$-predictable processes $(Z,\Gamma):[0,T]\times\Omega\longrightarrow \R^d\times\S^\xdim$ such that 
	\[ 
	\|Y^{y,\alpha,Z,\Gamma}\|_{\S^\smallfont{p}(\mathbb F^{\smallfont{\Pc}^\tinyfont{\alpha}}, \Pc^\smallfont{\alpha})}^p+\|Z\|_{\H^\smallfont{p}(\mathbb F^{\smallfont{\Pc}^\tinyfont{\alpha}}, \Pc^\smallfont{\alpha})}^p <+\infty ,\]
	for some $p>1$, where for $y\in \R$ we define, $\P\as$ for all $\P\in\Pc^\alpha$, the process
	\begin{equation}\label{eq:Y-ZGamma}
		Y^{y,\alpha,Z,\Gamma}_t\coloneqq  y-\int_0^t H^{\rm F}_s(X_{\cdot\wedge s},Z_s,\Gamma_s,\alpha_s) \mathrm{d}s+\int_0^tZ_s\cdot \mathrm{d}X_s+ \frac12\int_0^t \mathrm{Tr} [ \sigmah^2_s \Gamma_s]  \mathrm{d}s,\; t\in [0,T].
	\end{equation}
\end{definition}

The next proposition provides an optimality condition for a pair $(\P,\beta)$, when the process $Y^{y,\alpha,Z,\Gamma}$ \emph{hits} the correct terminal condition, \emph{i.e.} $Y_T^{y,\alpha,Z,\Gamma}=g(X_{\cdot\wedge T})$, $\P\as$ In such a case, the follower's value coincides with $y$, and his optimal actions correspond to maximisers of the Hamiltonian $H^{\rm F}$. We will use this characterisation in the next section to obtain a reformulation of the problem of the leader.

\begin{proposition}\label{prop-response-follower-niceform}
	Let $\alpha\in\Ac$ and {\color{black}$(y,Z,\Gamma)\in\R\times{\Cc^\alpha}$} be such that $Y_T^{y,\alpha,Z,\Gamma}=g(X_{\cdot\wedge T})$, $\P\as$, for some $(\P,\beta)\in\Rc(\alpha)$. Then, the following are equivalent
	\begin{enumerate}[label=$(\roman*)$, ref=.$(\roman*)$,wide,  labelindent=0pt]
		\item $(\P ,\beta )\in\Rc^\star(\alpha)$ and $V_{\rm F}(\alpha)=y;$
		
		\item $\beta$ maximises $h^{\rm F}$ on the support of $\P $
		\begin{equation}\label{eq:optimal-effort-reformulated}
			H^{\rm F}_t(X_{\cdot\wedge t},Z_t,\Gamma_t,\alpha_t)= h^{\rm F}_t(X_{\cdot\wedge t},Z_t,\Gamma_t,\alpha_t,\beta_t), \; \mathrm{d}t \otimes \mathrm{d}\P\ae
		\end{equation}
	\end{enumerate}
\end{proposition}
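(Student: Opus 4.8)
The plan is to connect the parametrisation $Y^{y,\alpha,Z,\Gamma}$ to the 2BSDE \eqref{eq:2bsde-follower} via an associated non-decreasing process, and then invoke \Cref{prop:main-follower}. First I would observe that, since $(y,Z,\Gamma)\in\R\times\Cc^\alpha$ and $Y_T^{y,\alpha,Z,\Gamma}=g(X_{\cdot\wedge T})$ $\P\as$ for the given $(\P,\beta)\in\Rc(\alpha)$, one can define, $\Pc^\alpha\qs$, the process
\[
K_t\coloneqq \int_0^t\Big(H^{\rm F}_s(X_{\cdot\wedge s},Z_s,\Gamma_s,\alpha_s)-F_s(X_{\cdot\wedge s},Z_s,\sigmah^2_s,\alpha_s)-\tfrac12\mathrm{Tr}[\sigmah^2_s\Gamma_s]\Big)\d s,\; t\in[0,T].
\]
By the Legendre-transform identity $H^{\rm F}_t(x,z,\gamma,a)=\sup_{\Sigma\in\S^d_+}\{F_t(x,z,\Sigma,a)+\tfrac12\mathrm{Tr}[\Sigma\gamma]\}$ established just before \eqref{eq:2bsde-follower}, the integrand is non-negative (take $\Sigma=\sigmah^2_s$ in the supremum), so $K$ is non-decreasing. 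Rewriting \eqref{eq:Y-ZGamma} with this $K$ shows that $Y^{y,\alpha,Z,\Gamma}$ solves, $\Pc^\alpha\qs$, the forward equation $\d Y_t=-F_t(X_{\cdot\wedge t},Z_t,\sigmah^2_t,\alpha_t)\d t+Z_t\cdot\d X_t-\d K_t$ with terminal value $g(X_{\cdot\wedge T})$; that is, $(Y^{y,\alpha,Z,\Gamma},Z,K)$ satisfies the dynamics in \eqref{eq:2bsde-follower}. The integrability of $(Y^{y,\alpha,Z,\Gamma},Z)$ is guaranteed by membership in $\Cc^\alpha$, and that of $K$ follows from the equation itself and the boundedness assumptions on the data.

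Next I would address the minimality condition \eqref{eq:minimality condition}. The subtle point is that the $K$ built above need not \emph{a priori} be the minimal one; however, the key observation (which is exactly the content of \cite[Theorem 3.6]{cvitanic2018dynamic}, already invoked in the paragraph preceding \Cref{def.parametrization}) is that processes of this form are precisely those used to approximate/represent the minimal non-decreasing process of the 2BSDE, and one checks the minimality by a standard argument: under $\P$, the fact that $Y_T^{y,\alpha,Z,\Gamma}=g(X_{\cdot\wedge T})$ together with the uniqueness in \Cref{prop:main-follower} forces $(Y^{y,\alpha,Z,\Gamma},Z,K)$ to coincide, on the support of $\P$, with the unique solution $(\widehat Y,\widehat Z,\widehat K)$ of \eqref{eq:2bsde-follower} — here one uses that both solve the same linear-in-$Z$ forward SDE with the same terminal condition under $\P$, hence $Y^{y,\alpha,Z,\Gamma}=\widehat Y$, $Z=\widehat Z$, $K=\widehat K$ $\P\as$. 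In particular $y=Y_0^{y,\alpha,Z,\Gamma}=\widehat Y_0$.

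With this identification in hand, the equivalence is immediate from \Cref{prop:main-follower}. For $(i)\Rightarrow(ii)$: if $(\P,\beta)\in\Rc^\star(\alpha)$ and $V_{\rm F}(\alpha)=y$, then by \Cref{prop:main-follower} we have $\widehat K_T=0$ $\P\as$, hence $K_T=0$ $\P\as$, and since $K$ is non-decreasing and absolutely continuous with non-negative density, its density vanishes $\d t\otimes\d\P\ae$; that is, $H^{\rm F}_t(X_{\cdot\wedge t},Z_t,\Gamma_t,\alpha_t)=F_t(X_{\cdot\wedge t},Z_t,\sigmah^2_t,\alpha_t)+\tfrac12\mathrm{Tr}[\sigmah^2_t\Gamma_t]$, $\d t\otimes\d\P\ae$ Combining this with \eqref{eq:optimal-effort-follower} from \Cref{prop:main-follower} (which says $\beta$ maximises in the definition of $F$, and $\beta_t\in A_t(X_{\cdot\wedge t},\sigmah^2_t,\alpha_t)$ so that $\sigma\sigma^\top_t(X_{\cdot\wedge t},\alpha_t,\beta_t)=\sigmah^2_t$) yields exactly \eqref{eq:optimal-effort-reformulated}, by writing $h^{\rm F}=c+\sigma\lambda\cdot z+\tfrac12\mathrm{Tr}[\sigma\sigma^\top\gamma]$ and matching terms. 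For $(ii)\Rightarrow(i)$: \eqref{eq:optimal-effort-reformulated} together with the constraint structure forces the density of $K$ to vanish $\d t\otimes\d\P\ae$, hence $K_T=0$ $\P\as$; then \Cref{prop:main-follower} gives $(\P,\beta)\in\Rc^\star(\alpha)$, and $V_{\rm F}(\alpha)=\sup_{\P'\in\Pc^\alpha}\{\E^{\bar\P'}[\widehat Y_0]\}=\widehat Y_0=y$ (the supremum being attained and constant equal to $\widehat Y_0$ on the support of any optimal $\P$). The main obstacle I anticipate is the careful bookkeeping in the minimality/uniqueness identification — ensuring that equality of the two triples holds $\P\as$ on the right filtration and that \eqref{eq:minimality condition} is genuinely inherited — but this is essentially a transcription of the arguments in \cite{cvitanic2018dynamic} to the present notation, so no new difficulty should arise.
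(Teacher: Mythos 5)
Your reduction of condition $(ii)$ to the vanishing of the density of $K_t\coloneqq\int_0^t\big(H^{\rm F}_s-F_s-\frac12\mathrm{Tr}[\sigmah^2_s\Gamma_s]\big)\,\d s$ is sound: the Legendre-transform inequality does give a non-negative density, and under $\P$ one has $\sigmah^2_\cdot=\sigma\sigma^\t_\cdot(X,\alpha_\cdot,\beta_\cdot)$, so \eqref{eq:optimal-effort-reformulated} is equivalent to ``$\dot K=0$ and $\beta$ attains the supremum defining $F$'', $\d t\otimes\d\P\ae$ The gap is the identification step. You claim that, because $(Y^{y,\alpha,Z,\Gamma},Z,K)$ satisfies the dynamics of \eqref{eq:2bsde-follower} with terminal value $g(X_{\cdot\wedge T})$ under $\P$, uniqueness forces it to coincide $\P\as$ with the 2BSDE solution $(\widehat Y,\widehat Z,\widehat K)$, whence $y=\widehat Y_0$. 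This does not follow: uniqueness for \eqref{eq:2bsde-follower} holds only among triples satisfying the minimality condition \eqref{eq:minimality condition}, which your $K$ is precisely \emph{not} known to satisfy, and a BSDE with an unconstrained non-decreasing part admits infinitely many solutions with the same terminal datum (since $F$ does not depend on $Y$, the triple $(Y_t+c(T-t),Z_t,K_t+ct)_{t\in[0,T]}$ is another solution for any $c>0$). Worse, your identification uses neither $(i)$ nor $(ii)$, so if it were valid it would yield $y=\widehat Y_0$, hence $V_{\rm F}(\alpha)=y$, for \emph{every} $(y,Z,\Gamma)$ merely reaching the target under some $\P\in\Pc^\alpha$. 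That is false: for a fixed $\alpha$, choosing a wasteful $\Gamma$ (one for which the density of $K$ is strictly positive) lets one reach the same target from a strictly larger initial value $y$, while $V_{\rm F}(\alpha)$ is a single number; this multiplicity of reachable $y$'s is exactly what makes the target reachability set of \Cref{lemma.boundaries} a nondegenerate interval.

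The equivalence has to be argued by making the ``waste'' quantitative rather than by appealing to uniqueness. For $(i)\Rightarrow(ii)$, substitute the forward dynamics \eqref{eq:Y-ZGamma} into $U_{\rm F}(\P,\beta)=\E^{\bar\P}\big[\int_0^Tc_s\,\d s+Y^{y,\alpha,Z,\Gamma}_T\big]$ and use the martingale property of the stochastic integral (guaranteed by the integrability in $\Cc^\alpha$) to obtain $U_{\rm F}(\P,\beta)=y+\E^{\bar\P}\big[\int_0^T\big(h^{\rm F}_s(\cdot,\beta_s)-H^{\rm F}_s\big)\d s\big]\leq y$; optimality of $(\P,\beta)$ together with $V_{\rm F}(\alpha)=y$ then forces the non-positive integrand to vanish. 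For $(ii)\Rightarrow(i)$, observe that under \eqref{eq:optimal-effort-reformulated} the process $Y^{y,\alpha,Z,\Gamma}$ solves the $\P$-BSDE with generator $F$ and identically vanishing non-decreasing part, and invoke the characterisation of optimal responses in \Cref{prop:main-follower} (vanishing of $K$ under $\P^\star$ together with $\beta^\star$ maximising $F$) to conclude $(\P,\beta)\in\Rc^\star(\alpha)$ and $y=V_{\rm F}(\alpha)$. You should replace the uniqueness argument by this direct computation.
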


\begin{proof}[Proof of {\Cref{prop-response-follower-niceform}}]
	Let $(\P,\beta)\in\Rc(\alpha)$ such that $Y_T^{y,\alpha,Z,\Gamma}=g(X_{\cdot\wedge T})$, $\P\as$ Assume $(i)$ holds. Then, the value and utility of the follower satisfy
	\begin{align*}
		V_{\rm F}(\alpha) &=U_{\rm F}(\P ,\beta )
		= \E^{\bar \P} \bigg[ \int_0^T c_s(X_{\cdot\wedge s},\alpha_s,\beta_s) \mathrm{d}s + Y_T^{y,\alpha,Z,\Gamma}  \bigg] .
	\end{align*}
	By writing the dynamics of $Y^{y,\alpha,Z,\Gamma} $ and the fact that $\P$ is a weak solution to \eqref{eq:X-dynamics-with-drift} with $(\alpha,\beta)$, we obtain
	\begin{align*}
		U_{\rm F}(\P,\beta) & = \E^{\bar \P} \bigg[ \int_0^T c_s(X_{\cdot\wedge s},\alpha_s,\beta_s) \mathrm{d}s + y-\int_0^T H^{\rm F}_s(X_{\cdot\wedge s},Z_s,\Gamma_s,\alpha_s) \mathrm{d}s +\int_0^T Z_s\cdot \mathrm{d}X_s\\
		&\qquad \quad+ \frac12\int_0^T \mathrm{Tr} [ \widehat\sigma^2_s \Gamma_s]  \mathrm{d}s \bigg] \\
		& = y + \E^{\bar \P} \bigg[  \int_0^T \big( h^{\rm F}_s(X_{\cdot\wedge s},Z_s,\Gamma_s,\alpha_s,\beta_s) - H^{\rm F}_s(X_{\cdot\wedge s},Z_s,\Gamma_s,\alpha_s) \big) \mathrm{d}s \\
		&\qquad \quad +\int_0^T Z_s\cdot \sigma_s(X,\alpha_s,\beta_s^\star) \mathrm{d}W^{\bar \P^\star}_s \bigg]  \\
		& = y + \E^{\bar \P} \bigg[  \int_0^T \big( h^{\rm F}_s(X_{\cdot\wedge s},Z_s,\Gamma_s,\alpha_s,\beta_s) - H^{\rm F}_s(X_{\cdot\wedge s},Z_s,\Gamma_s,\alpha_s) \big) \mathrm{d}s \bigg],
	\end{align*}
	since the stochastic integral is a martingale due to the integrability conditions specified in the definition of ${\Cc^\alpha}$. Now, by definition of $H^{\rm F}$, see \eqref{eq.hamiltonian}, we see that $V_{\rm F}(\alpha)\leq y$. 
	Since $V_{\rm F}(\alpha)= y$, we deduce $(ii)$ holds. Let us now assume $(ii)$. 
	Since $(\P ,\beta)\in \Rc(\alpha)$, it follows from \eqref{eq.prob.follower.separated} that
	\begin{align}\label{eq.supfixedp}
		V_{\rm F}(\alpha) \geq \sup_{\beta \in\Bc(\alpha,\P )} \bigg\{\E^{\bar\P} \bigg[ \int_0^T c_s(X_{\cdot\wedge s},\alpha_s,\beta_s) \mathrm{d}s + g(X_{\cdot\wedge T}) \bigg]\bigg\}.
	\end{align}
	The value on the right \textcolor{black}{corresponds to $\E^{\bar \P}[Y_0]$, where $(Y,Z,K)$ is the unique solution to the BSDE} 
	\[
	Y_t = g(X_{\cdot\wedge T}) + \int_t^T F_s(X_{\cdot\wedge s},Z_s, \sigmah_s^2,\alpha_s) \mathrm{d}s - \int_t^T Z_s \cdot \mathrm{d}X_s+\int_t^T \d K_s , \; t\in[0,T],\; \P  \as,
	\]
	and equality in \eqref{eq.supfixedp} holds if $K_T=0$, $\P\as$ Since $2H^{\rm F}=(-2F)^\ast$ and \eqref{eq:optimal-effort-reformulated} hold, together with the condition $Y_T^{y,\alpha,Z,\Gamma}=g(X_{\cdot\wedge T})$, $\P\as$, we see that $Y^{y,\alpha,Z,\Gamma}$ satisfies
	\[
	Y_t^{y,\alpha,Z,\Gamma} = g(X_{\cdot\wedge T}) + \int_t^T F_s(X_{\cdot\wedge s},Z_s, \sigmah_s^2,\alpha_s) \mathrm{d}s - \int_t^T Z_s \cdot \mathrm{d}X_s +\int_t^T \d K_s^{Z,\Gamma,\alpha}, \; \P \as, 
	\]
	for $t\in[0,T]$, where
	\[
	K_t^{Z,\Gamma,\alpha} \coloneqq \int_0^t \bigg(    H^{\rm F}_s(X_{\cdot\wedge s},Z_s,\Gamma_s,\alpha_s) -h^{\rm F}_s(X_{\cdot\wedge s},Z_s,\Gamma_s,\alpha_s,\beta_s)  \bigg) \mathrm{d}s,
	\]
	which by assumption satisfies $K_T^{Z,\Gamma,\alpha}=0,$ $\P\as$
	Hence $(\P,\beta)\in \Rc^\star(\alpha)$ from the previous discussion. 
	Finally, since by uniqueness of the solution we have that $y=\E^{\bar \P}[Y_0]$, the fact that $V_{\rm F}(\alpha)= y$ is argued as in $(i)$.
\end{proof}

\subsection{A stochastic target reformulation of the problem of the leader}

In light of the results from the previous section, we are drawn to reformulate the problem faced by the leader as a stochastic control problem with stochastic target constraints. 
Indeed, \Cref{prop-response-follower-niceform} tells us that the value of the follower (given the control $\alpha$ by the leader) is equal to $V_{\rm F}(\alpha)=y$, and any pair $(\P^\star,\beta^\star)$ that satisfies \eqref{eq:optimal-effort-reformulated} is a solution to the problem of the follower, as long as $Y^{y,\alpha,Z,\Gamma}$ hits the correct terminal value. 

\smallskip
For $(Z,\Gamma,\alpha)\in {\Cc^\alpha}\times\Ac $ and \textcolor{black}{deterministic $y\in\R$}, which represents the value of the follower, 
let us define the set 
\begin{align*}
	\Rc^\star(y,\alpha,Z,\Gamma)&\coloneqq \{ (\P,\beta) \in  \Rc(\alpha) : Y_T^{y,\alpha,Z,\Gamma} = g(X_{\cdot \wedge T}),\; \text{and }\eqref{eq:optimal-effort-reformulated} \text{ hold, } \P\as\}.
\end{align*}

We propose then the following reformulation of the problem of the leader 
\begin{equation}\label{eq:leader-problem-reformulated}
	\hat V_{\rm L} \coloneqq  \sup_{y \in\R} \sup_{(\alpha,Z,\Gamma)\in {\Ac\times \Cc^\smallfont{\alpha}} } \sup_{(\P,\beta)\in\Rc^\smallfont{\star}(y,\alpha,Z,\Gamma)}
	\bigg\{\E^{\bar\P} \bigg[   \int_0^T C_s(X_{\cdot\wedge s},\alpha_s,\beta_s) \d s + G(X_{\cdot \wedge T}) \bigg]\bigg\}.
\end{equation}
\begin{remark}
	Let us briefly digress on the nature of \eqref{eq:leader-problem-reformulated}.
	\begin{enumerate}[label=$(\roman*)$, ref=.$(\roman*)$,wide,  labelindent=0pt]
		\item	A distinctive feature of \eqref{eq:leader-problem-reformulated} is that, as described in {\rm \Cref{sec.stateprocessform}}, the dynamics of the controlled process $X$ are given in weak formulation whereas those of $Y$ are given in strong formulation as in {\rm \eqref{eq:Y-ZGamma}}.
		Though the reader might find this atypical, we recall that this feature is common in the dynamic programming approach in contract theory.
		Since up until this point, our approach has borrowed ideas from this literature, it is not surprising to find this feature in \eqref{eq:leader-problem-reformulated}.
		\item {\color{black} Let us also digress on our choice to reformulate \eqref{eq:problem-leader} as an optimal control problem with target constraints.
			This is certainly not the only possible reformulation available.
			Alternatively, thanks to {\rm \Cref{prop:main-follower}}, \eqref{eq:problem-leader} also admits a reformulation as an optimal control problem of {\rm FBSDEs}.
			Yet we think that there are some shortcomings in following this route.
			Though there exists some literature on this class of control problems, because there is no general comparison principle for {\rm FBSDEs}, results tend to leverage the stochastic maximum principle to derive both necessary and sufficient conditions for optimality.
			Consequently, most of these works consider continuously differentiable state-dependent data in order to derive necessary conditions.
			Additional concavity/convexity assumptions are needed to derive sufficient conditions in terms of a system of {\rm FBSDEs} with twice as many variables as in the initial system, see for instance {\rm \cite[Chapter 10]{cvitanic2012contract}}.
			Be it as it may, we believe that the sufficient condition obtained through our approach (see {\rm \Cref{thm.verification}} below),  which leads naturally to a PDE-based numerical scheme for solving \eqref{eq.fullHJB},}  is more amenable to the analysis and numerical implementations than those in the literature on the control of {\rm FBSDEs}.
	\end{enumerate}
\end{remark}

Recall that $\Rc^\star(y,\alpha,Z,\Gamma) $ is non-empty thanks to \Cref{prop:main-follower} and the discussion thereafter. {Since we agreed that the supremum over an empty set is $-\infty$}, the supremum in the $y$-variable could be taken instead over the set 
\[
\mathfrak{T}\coloneqq \{y\in\R: \Rc^\star(y,\alpha,Z,\Gamma)   \neq\emptyset \text{ for some }(Z,\Gamma,\alpha)\in \Cc^\alpha \times \Ac  \},
\] 
which corresponds to the so-called target reachability set in the language of stochastic target problems as studied for instance in \cite{soner2003stochastic}.
By \Cref{eq:problem-leader}, the reward of the leader is only computed under optimal responses $(\P ,\beta )\in \Rc^\star(\alpha)$, and $\Rc^\star(y,\alpha,Z,\Gamma)$ provides the optimal responses of the follower. \smallskip

The interpretation of $\hat V_{\rm L}$ is as follows. The leader decides $y\in\R$ and optimal controls $(Z^\star,\Gamma^\star,\alpha^\star)\in{\Cc^{\alpha^\smallfont{\star}}}\times \Ac$. She then announces her control $\alpha^\star\in\Ac$ for which she knows that the value of the follower is $y$, \emph{i.e.} $V_{\rm F}(\alpha^\star)=y$, and that his optimal controls belong to $\Rc^\star(y, Z^\star,\Gamma^\star,\alpha^\star)$. The leader can make a recommendation to the follower for his optimal response and the corresponding value, which the latter will follow since he has no better alternative. 
This holds true for every {$y\in \mathfrak{T}$} and the optimal choice of this value is the one that maximises the objective function of the leader. This new problem is a reformulation of the problem of the leader as the following result shows. 

\begin{theorem}\label{prop.equivformulationleader}
	The reformulated and the original problem of the leader have the same value, that is, $\hat V_{\rm L}= V_{\rm L}$.
\end{theorem}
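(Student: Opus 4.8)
The plan is to establish the two inequalities $\hat V_{\rm L}\leq V_{\rm L}$ and $V_{\rm L}\leq \hat V_{\rm L}$ separately, both relying on \Cref{prop-response-follower-niceform}, which for a fixed leader's control $\alpha$ matches the optimal responses of the follower with the tuples $(y,Z,\Gamma)$ whose forward process $Y^{y,\alpha,Z,\Gamma}$ hits the terminal value $g(X_{\cdot\wedge T})$. First, for $\hat V_{\rm L}\leq V_{\rm L}$, take any admissible $y\in\R$, $(\alpha,Z,\Gamma)\in\Ac\times\Cc^\alpha$ and $(\P,\beta)\in\Rc^\star(y,\alpha,Z,\Gamma)$. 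By the very definition of $\Rc^\star(y,\alpha,Z,\Gamma)$ one has $(\P,\beta)\in\Rc(\alpha)$, $Y_T^{y,\alpha,Z,\Gamma}=g(X_{\cdot\wedge T})$, $\P\as$, and \eqref{eq:optimal-effort-reformulated} holds; the implication $(ii)\Rightarrow(i)$ in \Cref{prop-response-follower-niceform} then gives $(\P,\beta)\in\Rc^\star(\alpha)$. Hence $\E^{\bar\P}[\int_0^T C_s\mathrm{d}s+G(X_{\cdot\wedge T})]$ is one of the terms in the supremum \eqref{eq:problem-leader} defining $V_{\rm L}$, so it is bounded by $V_{\rm L}$; taking the supremum over all such tuples yields $\hat V_{\rm L}\leq V_{\rm L}$.

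For the reverse inequality, I would show that any $\alpha\in\Ac$ with $\Rc^\star(\alpha)\neq\emptyset$ and any $(\P,\beta)\in\Rc^\star(\alpha)$ can be realised as an element of some $\Rc^\star(y,\alpha,Z,\Gamma)$. Fix such $\alpha$ and $(\P,\beta)$, and let $(Y,Z,K)$ be the unique solution of the 2BSDE \eqref{eq:2bsde-follower} from \Cref{prop:main-follower}. By the same proposition $K_T=0$, $\P\as$; since $K$ is non-decreasing with $K_0=0$ this forces $K\equiv 0$, $\P\as$, so that under $\P$ the pair $(Y,Z)$ solves the first-order BSDE with generator $F_\cdot(X_{\cdot\wedge\cdot},Z_\cdot,\sigmah^2_\cdot,\alpha_\cdot)$ and no non-decreasing part. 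Set $y\coloneqq V_{\rm F}(\alpha)\in\R$ (finite by \Cref{Assumption.data.2}). Since $X_0=x_0$ is deterministic, $Y_0$ is $\P\as$ equal to a constant; taking $\bar\P$-expectation in the BSDE (the stochastic integral is a $\bar\P$-martingale by the integrability in \Cref{prop:main-follower}) and using that $\beta$ is a maximiser in the definition of $F$, so that $F_s(X_{\cdot\wedge s},Z_s,\sigmah^2_s,\alpha_s)-Z_s\cdot\sigma\lambda_s(X_{\cdot\wedge s},\alpha_s,\beta_s)=c_s(X_{\cdot\wedge s},\alpha_s,\beta_s)$, shows this constant equals $\E^{\bar\P}[g(X_{\cdot\wedge T})+\int_0^Tc_s\mathrm{d}s]=V_{\rm F}(\alpha)=y$.

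The heart of the argument is then the construction of the second-order control $\Gamma$. Since $\beta$ is an optimal response, the realised squared diffusion $\sigmah^2$ is optimal under $\P$, and the minimality condition \eqref{eq:minimality condition} of the 2BSDE translates—exactly as in \cite[Theorem 3.6]{cvitanic2018dynamic}—into the fact that the supremum in the Legendre identity $H^{\rm F}_t=\sup_{\Sigma\in\S^d_+}\{F_t(\cdot,\Sigma,\cdot)+\tfrac12\mathrm{Tr}[\Sigma\,\cdot\,]\}$ is attained at $\Sigma=\sigmah^2_t$ along the optimal response. A measurable selection argument then furnishes an $\F^{\Pc^\alpha}$-predictable process $\Gamma$, with linear growth in $Z$ (hence $(Z,\Gamma)\in\Cc^\alpha$, using \Cref{Assumption.data}, \Cref{Assumption.data.2} and the uniform ellipticity of $\sigmah^2$ to control $Y^{y,\alpha,Z,\Gamma}$ in $\mathbb{S}^p(\F^{\Pc^\alpha},\Pc^\alpha)$), such that $\mathrm{d}t\otimes\mathrm{d}\P\ae$
\[
H^{\rm F}_t(X_{\cdot\wedge t},Z_t,\Gamma_t,\alpha_t)-\tfrac12\mathrm{Tr}[\sigmah^2_t\Gamma_t]=F_t(X_{\cdot\wedge t},Z_t,\sigmah^2_t,\alpha_t)=h^{\rm F}_t(X_{\cdot\wedge t},Z_t,\Gamma_t,\alpha_t,\beta_t).
\]
With this $\Gamma$, the processes $Y$ and $Y^{y,\alpha,Z,\Gamma}$ solve the same SDE under $\P$ with the same initial value $y=Y_0$, so $Y^{y,\alpha,Z,\Gamma}\equiv Y$, $\P\as$, whence $Y_T^{y,\alpha,Z,\Gamma}=g(X_{\cdot\wedge T})$, $\P\as$; moreover the displayed equality is precisely \eqref{eq:optimal-effort-reformulated}. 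Therefore $(\P,\beta)\in\Rc^\star(y,\alpha,Z,\Gamma)$, its leader's reward does not exceed $\hat V_{\rm L}$, and taking the supremum over $\alpha$ and over $(\P,\beta)\in\Rc^\star(\alpha)$ gives $V_{\rm L}\leq \hat V_{\rm L}$.

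\textbf{Main obstacle.} The delicate point is exactly this construction of $\Gamma$: one must (a) use the minimality condition of the 2BSDE—not merely its solvability—to guarantee that the concave conjugacy between $F$ and $H^{\rm F}$ is tight at the realised volatility $\sigmah^2$ along an optimal response, and (b) select the corresponding multiplier $\Gamma$ in a predictable way and with enough integrability that $Y^{y,\alpha,Z,\Gamma}$ stays in $\mathbb{S}^p$ uniformly over all of $\Pc^\alpha$, and not only under the optimal $\P$. Everything else in the two inequalities is a bookkeeping exercise built on \Cref{prop:main-follower} and \Cref{prop-response-follower-niceform}.
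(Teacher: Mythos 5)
Your proposal is correct and follows essentially the same route as the paper's proof: the easy inequality via \Cref{prop-response-follower-niceform}, and the reverse one by taking the 2BSDE solution attached to an optimal response, setting $y$ equal to the follower's value, and manufacturing $\Gamma$ through the convex conjugacy between $F$ and $H^{\rm F}$. Two small remarks: the tightness of that conjugacy at $\sigmah^2$ comes from coercivity and convex duality (Rockafellar), not from the minimality condition \eqref{eq:minimality condition} (whose role is only to give $K_T=0$ under the optimal measure via \Cref{prop:main-follower}); and the quasi-sure $\mathbb S^p$ bound you flag in (b) follows at once from the identity $Y^{y,\alpha,Z,\Gamma}=Y+K$ together with $(Y,K)\in\mathbb S^p\times\mathbb I^p$, rather than from ellipticity---this is in effect what the paper's $\eps$-mollification of $K$ is engineered to deliver.
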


\begin{proof} 
	$(i)$ Let $y\in\R$ and assume that $y\in \mathfrak{T}$ since the supremum in the $y$-variable in \eqref{eq:leader-problem-reformulated} can be reduced to this set. 
	Take next $( \alpha,Z,\Gamma)\in\Ac\times{\Cc^\alpha}$, $(\P,\beta)\in\Rc^\star( y,\alpha,Z,\Gamma)$, and let $Y^{y,\alpha,Z,\Gamma}$ be the process given by \eqref{eq:Y-ZGamma}. By \Cref{prop-response-follower-niceform}, $y=Y_0^{y,\alpha,Z,\Gamma}=V_{\rm F}(\alpha)$ and $(\P,\beta)\in\Rc^\star(\alpha)$. This means that the optimal response of the follower to the action $\alpha$ is given by $(\P,\beta)$. Therefore, the objective function in problem $\hat{V}_{\rm L}$ at $(y,\alpha,Z,\Gamma,\P,\beta)$ is matched by the objective function in $V_{\rm L}$ at $(\alpha,\P,\beta)$. This implies $\hat V_{\rm L} \leq  V_{\rm L}$.\smallskip
	
	$(ii)$ We show that the leader's objective function in $V_{\rm L}$ can be approximated by elements in ${\Cc^\alpha}$. Let $\alpha\in\Ac$ and $(\P^\star,\beta^\star)\in \Rc^\star(\alpha)$. By \Cref{prop:main-follower}, there is $(Y,Z,K)$ solution to the 2BSDE \eqref{eq:2bsde-follower}. We argue in 2 steps.\smallskip
	
	\emph{Step $1$.} We construct an approximate solution to \eqref{eq:2bsde-follower}. Let $\eps>0$, $y\coloneqq \E^{\P^\smallfont{\star}}[Y_0]$ and define
	\[
	K_t^\eps\coloneqq \frac{1}\eps \int_{(t-\eps)^\smallfont{+}}^tK_s \d s,\; Y^{\eps}_t\coloneqq  y-\int_0^t F_s(X_{\cdot\wedge s},Z_s,\sigmah_s^2,\alpha_s) \mathrm{d}s+\int_0^tZ_s\cdot \mathrm{d}X_s+\int_0^t \mathrm{d}K_s^\eps.
	\]
	Note that $K^\eps$ is absolutely continuous, $\F^{\Pc^\smallfont{\alpha}}$-predictable, non-decreasing $\Pc^\alpha\qs$, and $K_T^\eps=0$, $\P^\star\as$ Since $K_T^\eps \leq K_T$, $K^\eps\in \I^p(\F^{\Pc^\smallfont{\alpha}},\Pc^\alpha)$ satisfies \eqref{eq:minimality condition} and $Y_T^\eps$ satisfies the required integrability. 
	{\color{black} That is, $(Y^\eps,Z,K^\eps)$ satisfies \eqref{eq:2bsde-follower} with terminal condition $Y_T^\eps$.
		By standard \emph{a priori} estimates, see \cite[Theorem 4.4]{possamai2018stochastic}, we have that $\|Y^{\eps}\|_{\S^\smallfont{p}(\mathbb F^{\smallfont{\Pc}^\tinyfont{\alpha}}, \Pc^\smallfont{\alpha})}<\infty$.} 
	All in all, we deduce that $(Y^\eps,Z,K^\eps)$ is a solution to 2BSDE \eqref{eq:2bsde-follower} with terminal condition $Y_T^\eps$.\smallskip
	
	\emph{Step $2$.} We show the approximation can be given in terms of elements in ${\Cc^\alpha}$. Let $\dot K^\eps$ be the density, with respect to Lebesgue measure, of $K^\eps$. We claim that there is an $\F$-predictable process $\Gamma^\eps$ such that 
	\begin{align}\label{eq:Keps}
		\dot K_t^\eps = H^{\rm F}_t(X_{\cdot\wedge t},Z_t,\Gamma_t^\eps,\alpha_t) -F_t(X_{\cdot\wedge t},Z_t,\sigmah^2_t,\alpha_t) -\frac12\text{Tr} [\sigmah^2_t \Gamma^\eps_t].
	\end{align}
	Indeed, by definition of $H^{\rm F}$, the map $\gamma \in \S^{d} \longmapsto  H^{\rm F}_t(x,z,\gamma,a) -F_t(x,z,\Sigma,a) - \frac12\text{Tr} [\Sigma \gamma]$ is positive and continuous, for all $(t,x,z,\Sigma,a) \in [0,T] \times \Omega \times \R^d \times \S^d_+ \times A$. Then, by the boundedness of $c$, $\lambda$, and $\sigma$, the supremum over $\gamma$ of the previous function is $+\infty$. Finally, by \Cref{ass:duality}, its infimum over $\gamma$ is $0$ and is attained. Therefore, the previous map is surjective onto $[0, \infty)$, and since the density $\dot K^\eps$ is a non-negative process, we thus deduce the existence of $\Gamma^\varepsilon$ such that \eqref{eq:Keps} holds. Since 
	\[
	Y^{\eps}_T=y-\int_0^T H^{\rm F}_s(X_{\cdot\wedge s},Z_s,\Gamma_s^\eps,\alpha_s) \mathrm{d}s+\int_0^T Z_s\cdot \mathrm{d}X_s+\frac12 \int_0^T \text{Tr} [\sigmah^2_s \Gamma^\eps_s]\d s=Y_T^{y,Z,\Gamma^\eps,\alpha},
	\]
	we then find that $(Z,\Gamma^\eps)\in {\Cc^\alpha}$ and, recalling that $K=K^\eps=0$, $\P^\star\as$, we see that $\Gamma^\eps=\Gamma$, $\d t \otimes\d \P^\star\ae$, and deduce that $Y=Y^\eps,$ $\P^\star\as$ In particular, $Y^\eps_T=g(X_{\cdot\wedge T}),\;\P^\star\as$ Thus, from \Cref{prop-response-follower-niceform}, we deduce that $(\P^\star,\beta^\star)$ satisfies \eqref{eq:optimal-effort-reformulated} and thus $(\P^\star,\beta^\star)\in \Rc^\star(y,Z,\Gamma^\eps,\alpha)$. Similarly to the conclusion in part $(i)$, this implies that $ \hat V_{\rm L}\geq V_{\rm L} $.
\end{proof}

\begin{remark}\label{rmk.reformulation.pes}
	{Following on {\rm \Cref{rem:nonuniq}\ref{rem:nonuniq.3}} and inspecting the proof of {\rm\Cref{prop.equivformulationleader}}, we have that $V_{\rm L}^{\text{\scalefont{0.8}\textup{Pes}}}=\hat V_{\rm L}^{\text{\scalefont{0.8}\textup{Pes}}}$ where
		\[
		\hat V_{\rm L}^{\text{\scalefont{0.8}\textup{Pes}}}\coloneqq  \sup_{y \in\R} \sup_{(\alpha,Z,\Gamma)\in \Ac\times \Cc^{\smallfont{\alpha}} } \newinf_{(\P,\beta)\in\Rc^\smallfont{\star}(y,\alpha,Z,\Gamma)}\bigg\{
		\E^{\bar\P} \bigg[   \int_0^T C_s(X_{\cdot\wedge s},\alpha_s,\beta_s) \d s + G(X_{\cdot \wedge T}) \bigg]\bigg\}.
		\]}
\end{remark}

\section{Solving the problem of the leader: strong formulation}\label{sec:solve_leader}

In this section, we use the techniques developed in \cite{bouchard2010optimal,bouchard2009stochastic} based on the geometric dynamic programming principle \cite{soner2002dynamic,soner2002stochastic}, to study \emph{Markovian} stochastic target control problems. To take full advantage of the standard tools from stochastic target problems, we bring ourselves to a Markovian setting and study the strong formulation of \eqref{eq:leader-problem-reformulated}. More precisely, we assume here that the coefficients of the Stackelberg game, namely $C$, $c$, $G$, $g$ as well as $\lambda$ and $\sigma$, only depend on $X$ at time $t$ through its value $X_t$. However, as noticed when solving the illustrative example, considering a Markovian framework does not prevent the leader's strategy $\alpha$ to be a function of the path of $X$. 

\smallskip
Indeed, in such a Markovian setting, and as already mentioned in \Cref{rk:markovian}, the leader's reformulated problem naturally becomes a Markovian control problem, thanks to the consideration of the additional state variable $Y$. In particular, although ultimately the optimal strategy at time $t$ for the leader can be viewed as a `feedback' function, depending on time $t$, $X_t$ and $Y_t$, one should keep in mind that, in general, $Y_t$ depends on the trajectory of $X$ up to time $t$, ensuring that we are still considering general closed-loop strategies for the leader, and not a `Markovian' or `feedback' restriction. Regarding the strong formulation, we expect it to be equivalent to $\hat V_{\rm L}$, and refer to \Cref{rmk.strongf}.

\subsection{Characterising the reachability set}
In this setting, $(\Omega,\Fc_T,\F,\P)$ denotes an abstract complete probability space supporting a $\P$--Brownian motion, which we still denote $W$, and $\F$ denotes the filtration generated by $W$, augmented under $\P$ so that it satisfies the usual conditions. In addition, the dependence of the data of the problem on $(t,x)\in[0,T]\times \Cc([0,T]; \R^d)$ is only through $(t,x(t))\in [0,T]\times \R^\xdim$. With a slight abuse of notation, we now write $c(t,x(t),a,b)$ instead of $c_t(x,a,b)$---and similarly for all the other mappings introduced in the previous sections---and thus without any risk of misunderstanding, consider now the maps as defined on $\R^d$ instead of $\Cc([0,T]; \R^d)$.

\smallskip
In light of \Cref{prop-response-follower-niceform}, by a classical measurable selection argument, we introduce $\Bc^\star_o$ as the set of Borel-measurable maps $b^\star: [0,T]\times\R^\xdim\times \R^\xdim\times\S^\xdim\times A \longrightarrow B$ such that 
\begin{align*}
	H^{\rm F}(t,x,z,\gamma,a)  =  h^{\rm F}(t,x,z,\gamma,a,b^\star(t,x,z,\gamma,a)),
\end{align*}
for all $(t,x,z,\gamma,a)\in[0,T]\times\R^\xdim\times \R^\xdim\times\S^\xdim\times A$.
We now topologise $\Bc^\star_o$.
	Consider the measurable space $(O,\textup{O}, \lambda)$, where $O\coloneqq [0,T]\times\R^\xdim\times \R^\xdim\times\S^\xdim\times A$, and $\textup{O}$ and $\lambda$ denote the Borel $\sigma$-algebra and Lebesgue measure on $O$, respectively. 
	We see $\Bc^\star_o$ as a subspace of $\L^1(O , \nu)$, the space of $\textup{O}$-measurable mappings on $O$ integrable with respect to $\d \nu \coloneqq  C \e ^{-\|\cdot \|}\d \lambda $,  where $C>0$ is a normalising constant. 
	In this way, as a subspace of a separable metric space, $\Bc^\star_o$ is separable. 
Lastly, for any $b^\star\in \Bc^\star_o$ and $\varphi\in\{C,c,\lambda,\sigma, \lambda\sigma, \sigma\sigma^\t\}$ we define 
\begin{align}\label{eq.convention.b} 
	\varphi^{b^\smallfont{\star}\!}(t,x,a,z,\gamma)\coloneqq  \varphi(t,x,a,b^\star(t,x,z,\gamma,a)).
\end{align}
We introduce the following set of assumptions, which we comment on in \Cref{rmk.strongf}.

\begin{assumption}\label{assumption.strongtarget} In addition to {\rm \Cref{Assumption.data}}, we assume that
	\begin{enumerate}[label=$(\roman*)$, ref=.$(\roman*)$,wide,  labelindent=0pt]
		\item \label{assumption.strongtarget.2}  
		$c$, $\sigma$ and $\sigma \lambda$ are Lipschitz-continuous in $(x,b)$, uniformly in $(t,a)$;
		\item \label{assumption.strongtarget.3} 
		$\Bc^\star\neq \emptyset$, where
	\end{enumerate}
			\[ \Bc^\star \coloneqq \big\{ b^\star \in \Bc^\star_o: b^\star(t,x,z,\gamma,a) \text{ is Lipschitz-continuous in }(x,z),\text{uniformly in }(t,\gamma,a)\big\}.\]
\end{assumption} 

We let $\mathfrak{C}$ be the family of tuples $(\alpha, Z,\Gamma,b^\star)$ consisting of $\F$-predictable processes $(\alpha, Z,\Gamma):[0,T]\times\Omega\longrightarrow A\times \R^\xdim\times\S^\xdim$ and $b^\star\in \Bc^\star$ such that, for some $p>1$,
\[ 
\|Z\|_{\H^{\raisebox{1pt}{\smallfont{p}}}}^p  +\| \Gamma \|_{\G^{\raisebox{1pt}{\smallfont{p}}}}^p+\|b^\star\|_{\L^{\raisebox{1pt}{\smallfont{p}}}}^p <+\infty.\footnotemark
\] \footnotetext{Here and henceforth, see \Cref{sec:funcspace}, $\|\cdot\|_{\H^{\raisebox{1pt}{\smallfont{p}}}}$ and $\|\cdot\|_{\G^{\raisebox{1pt}{\smallfont{p}}}}$ denote $\|\cdot\|_{\H^{\raisebox{1pt}{\smallfont{p}}}(\F,\P)}$ and $\|\cdot\|_{\G^{\raisebox{1pt}{\smallfont{p}}}(\F,\P)}$ with $\hat \sigma=I_{d}$, respectively. See additional comments in \Cref{rmk.strongf}.}To alleviate the notation, we use $\upsilon$ to denote a generic element of $\mathfrak{C}$ and $\hat \upsilon=(\alpha, Z,\Gamma)$ its first three components. With this, given $t\in[0,T]$, $(x,y)\in \R^{d+1}$ and $\upsilon \in \mathfrak{C}$, the controlled state processes are given by
\begin{align}\label{eq.state.strongformulation}
	\begin{split}
		X_u^{t,x,\upsilon} & = x +  \int_t^u  (\sigma \lambda)^{b^\smallfont{\star}\!} (s,X_s^{t,x,\upsilon},\hat\upsilon_s  ) \d s + \int_t^u \sigma^{b^\smallfont{\star}\!} (s,X_s^{t,x,\upsilon},\hat\upsilon_s ) \mathrm{d}W_s,\; u\in[t,T],\\
		Y_u^{t,x,y,\upsilon} & = y  - \int_t^u c^{b^\smallfont{\star}\!} (s,X_s^{t,x,\upsilon},\hat\upsilon_s ) \mathrm{d}s + \int_t^u Z_s \cdot\sigma ^{b^\smallfont{\star}\!}(s,X_s^{t,x,\upsilon},\hat\upsilon_s  ) \mathrm{d}W_s,\; u\in[t,T].
	\end{split}
\end{align}

With this, we define the problem
\begin{align}\label{eq:leader-strong-problem-reformulated-2}
	\begin{split}
	\tilde V_{\rm L} &\coloneqq  \sup_{y\in\R} V(0,x_0,y), \\
	\text{with} \; V(t,x,y) &\coloneqq  \sup_{ \upsilon \in\Cf(t,x,y)} \bigg\{\E^{\P}  \bigg[   \int_t^T C^{b^\smallfont{\star}\!} (s,X_s^{t,x,\upsilon},\hat\upsilon _s ) \d s + G(X_T^{t,x,\upsilon})  \bigg]\bigg\},
	\end{split}
\end{align}
and where, for $(t,x,y)\in[0,T]\times\R^{d+1}$,
\[
\mathfrak{C}(t,x,y)\coloneqq  \big\{ \upsilon\in \mathfrak{C}: \hat\upsilon \text{ is independent of } \Fc_t \text{, and } Y_T^{t,y,x,\upsilon} = g(X_T^{t,x,\upsilon}), \ \P\as \big\}.
\]
\begin{remark}\label{rmk.strongf}
	Let us comment on the previous formulation. 
	\begin{enumerate}[label=$(\roman*)$, ref=.$(\roman*)$,wide,  labelindent=0pt]
		\item {\color{black} We remind the reader that in the strong formulation, the background probability measure $\P$ is fixed. \textcolor{black}{Consequently, the norms in the definition of $\Cf$ not only coincide with those in the standard literature but also, under the assumptions on $\sigma$ and $\sigma\sigma^\t$, are equivalent to those used in the previous section.} 
			In particular, contrary to the weak formulation, the family $\mathfrak{C}$ does not depend on the choice of $\alpha\in \Ac$.
			We also remark that $\mathfrak{C}$ is a separable topological space. 
			This guarantees the geometric dynamic programming principle of {\rm \cite{bouchard2010optimal}}, based on {\rm \cite{soner2002dynamic}}, holds.}
			
		\item Let us now comment on {\rm \Cref{assumption.strongtarget}}. 
			The Lipschitz-continuity of $\sigma$ and $\lambda\sigma$ in $(x,b)$ and of $b^\star\in \Bc^\star$ in $x$ ensures that the process $X^{t,x,\upsilon}$ is well-defined.
			Notice that $Y^{t,x,y,\upsilon}$ is a direct definition.
			The Lipschitz-continuity in $(x,b)$ of $c$ in {\rm \Cref{assumption.strongtarget}} together with the Lipschitz-continuity of $b^\star\in \Bc^\star$ in $z$ will be used to establish a comparison principle for the target boundaries in {\rm \Cref{sec.verification.leader}}.
Regarding \Cref{assumption.strongtarget}\ref{assumption.strongtarget.3}, since we do not assume uniqueness of the maximisers of $h^{\rm F}$ in $b$, we focus on the class $\Bc^\star$ to guarantee uniqueness of strong solutions in the formulation of the problem though this can be certainly verified without it on a case-by-case basis.
			We stress this assumption is not needed for the main result of this section, namely \Cref{thm.verification}, so long as one can verify existence of a strong solution.
			Needless to say, from the point of view of the Leader, optimizing over $\Bc^\star$ is, in general, suboptimal.\footnote{In the case in which $\Bc^\star_o$ is single-valued, our assumption reads as the unique maximizing function $b^\star$ being Lipscthiz continuous. Then $\Bc^\star=\Bc^\star_o$ and the Leader is not solving a suboptimal problem.}
			We highlight that the assumption on the elements of $\Bc^\star$ is ultimately one on the primitives of the model.
			In general, membership in $\Bc^\star$ can be verified via convex or first-order analysis in cases where $h^{\rm F}$ exhibits a separable structure in $(x,b)$ or admits regular optimizers, respectively. This applies, for example, to the setting discussed in Section 2 in which $\Bc^\star$ is equal to $\Bc^\star_o$ and consists of a single element the moment $h^{\rm F}$ is strongly concave. 
		\item Let us also digress on the equivalence of the strong and weak formulations. 
		A potential roadmap to obtain this result uses {\rm \cite{karoui2013capacities2}}. 
		Indeed, to handle the constraint in both formulations, it is natural to embed it in the reward by means of a Lagrange multiplier $k \geq 0$ and the continuous penalty function $\Phi(y,x)\coloneqq |g(x)-y|^2$. 
		In this way, after establishing that strong duality holds, the results in {\rm \cite{karoui2013capacities2}} will allow us to obtain the equivalence of the strong and weak formulations for each element of a family of penalised problems, obtained by fixing $k$ and optimising over the corresponding controls. The only work needed to complete this argument is the strong duality results for the Lagrangian versions of both $\hat V_{\rm L}$ and $\tilde V_{\rm L}$. 
		We have refrained from writing such arguments as this will require, for instance, introducing the so-called relaxed formulation of $\hat V_{\rm L}$, which will unnecessarily encumber our analysis.
	\end{enumerate}
\end{remark}

As usual in stochastic target problems, we define the target reachability set as the set of triplets $(t,x,y)$ such that the set $\mathfrak{C}(t,x,y)$ is non-empty. That is
\[
V_g(t)\coloneqq  \big\{ (x,y)\in\R^{d+1}: \exists \upsilon \in\Cf(t,x,y), \; Y_T^{t,x,y,\upsilon} = g(X_T^{t,x,\upsilon}),\;  \P\as \big\}.
\]
We are interested in characterising $V_g(t)$, through the auxiliary sets
\begin{align*}
	V_g^-(t)\coloneqq  \big\{ (x,y)\in\R^{d+1}: \exists \upsilon \in\Cf(t,x,y) , \;  Y_T^{t,x,y,\upsilon} \geq g(X_T^{t,x,\upsilon}),\; \P\as \big\}, \\
	V_g^+(t)\coloneqq  \big\{ (x,y)\in\R^{d+1}: \exists \upsilon \in\Cf(t,x,y) , \;  Y_T^{t,x,y,\upsilon } \leq g(X_T^{t,x,\upsilon}), \; \P\as  \big\}.
\end{align*}
Notice that the inclusion $V_g(t) \subseteq V_g^-(t)\cap V_g^+(t)$ is immediate. The set $V_g^-(t)$ has been studied by \cite{soner2002stochastic,bouchard2009stochastic} and its boundary can be characterised through the auxiliary value function defined below
\begin{equation} \label{eq:auxiliary-w1}
	w^-(t,x)\coloneqq  \inf \{ y\in\R: (x,y)\in V_g^-(t)\}.
\end{equation}
It is known, see for instance \cite[Corollary 2.1]{bouchard2010optimal}, that the closure of $V_g^-(t)$ is given by
\[
\overline{V_g^-(t)} = \big\{ (x,y) : y\geq w^-(t,x) \big\}.
\]
Moreover, $w^-$ is a discontinuous viscosity solution of the following PDE,
\begin{equation}\label{eq:pde-inf}
	-\partial_t w(t,x) - H^-\big(t,x,\partial_x w(t,x),\partial_{xx}^2 w(t,x)\big) = 0,\; (t,x)\in[0,T)\times\R^d,
\end{equation}
with terminal condition $w(T^-,x)=g(x)$, and
where $H^-:[0,T]\times\R^\xdim\times\R^\xdim\times\S^\xdim  \longrightarrow\R$, $h^{\rm b}:[0,T]\times\R^\xdim\times\R^\xdim\times\S^\xdim \times A \times\R^d \times \S^\xdim\times \Bc^\star \longrightarrow\R$ are given by 
\begin{align}\label{eq:h_b}
	\begin{split}
		H^-(t,x,p,Q) & \coloneqq  \inf_{(\hat u,b^\smallfont{\star})\in N(t,x,p)} \big\{h^{\rm b}(t,x,p,Q,\hat u,b^{\star})\big\},  \\
		h^{\rm b}(t,x,p,Q,\hat u,b^{\star}) & \coloneqq  c^{b^\smallfont{\star}\!}(t,x,\hat u) + (\sigma \lambda)^{b^\smallfont{\star}\!}(t,x,\hat u)\cdot p + \frac12 \text{Tr}[ (\sigma{\sigma}^\t)^{b^\smallfont{\star}\!}(t,x,\hat u)Q],
	\end{split}
\end{align}
where we denoted $\hat u=(a,z,\gamma) \in A\times \R^\xdim\times\S^\xdim$. Since $\sigma\sigma^\t$ is invertible by assumption
\begin{align}\label{eq.optmatchvol}
	N(t,x,p) &\coloneqq \{ (\hat u,b^{\star})\in A\times \R^d\times\S^\xdim  \times \Bc^\star :  (\sigma^\t)^{b^\smallfont{\star}\!}(t,x,\hat u) (z-p)=0\} \nonumber \\ &= A\times \{p\} \times\S^\xdim  \times \Bc^\star. 
\end{align}
Similarly, by doing a change of variables and following the same ideas, the closure of $V_g^+(t)$ can be characterised as follows
\[
\overline{V_g^+(t)} = \big\{ (x,y) : y\leq w^+(t,x) \big\},
\]
where the auxiliary value function $w^+$ is defined by
\begin{equation} \label{eq:auxiliary-w2}
	w^+(t,x)\coloneqq  \sup \{ y\in\R: (x,y)\in V_g^+(t)\},
\end{equation}
and it is a discontinuous viscosity solution of the PDE
\begin{equation}\label{eq:pde-sup}
	-\partial_t w(t,x)-H^+\big(t,x,\partial_xw(t,x),\partial_{xx}^2 w(t,x)\big) = 0, \; (t,x)\in[0,T)\times\R^d,
\end{equation}
with terminal condition $w(T^-,x)=g(x), \; x\in\R^d$, and where $H^+:[0,T]\times\R^\xdim\times\R^\xdim\times\S^\xdim  \longrightarrow\R$ is given by
\[
H^+(t,x,p,Q)\coloneqq  \sup_{(\hat u,b^\smallfont{\star})\in N(t,x,p)} \big\{ h^{\rm b}(t,x,p,Q, \hat u,b^{\star})\big\} .
\]

We propose the two auxiliary value functions as the upper and lower boundaries of $V_g(t)$, and thus define the set 
\[
\hat{V}_g(t)\coloneqq  \{ (x,y): w^-(t,x) \leq y \leq w^+(t,x) \},
\]
which, provided the upper and lower boundaries are sufficiently separated before $T$, corresponds to the closure of the reachability set $V_G(t)$, as we prove next. For this, we introduce
\[
\delta_\eps\coloneqq \inf_{(t,x)\in [0,T-\eps]\times \R^\smallfont{\xdim}} \big\{| w^-(t,x)-w^+(t,x)|\big\}, \;  \eps>0.
\]

\begin{lemma}\label{lemma.boundaries}
	Let $t\in[0,T]$. The following holds
	\begin{enumerate}[label=$(\roman*)$, ref=.$(\roman*)$,wide,  labelindent=0pt]
		\item $V_g(t)\subseteq \hat{V}_g(t).$
		\item If in addition $\delta_\eps>0$ for any $\eps>0$, and $w^-$ and $w^+$ are continuous, then, ${\rm int}\big(\hat{V}_g(t)\big) \subseteq V_g(t)$ and $\overline{V_g(t)} = \hat{V}_g(t)$.
	\end{enumerate}
\end{lemma}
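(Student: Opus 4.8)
The plan is to obtain $(i)$ directly from the already-recorded characterisations of $\overline{V_g^\pm(t)}$, and $(ii)$ from a closure argument whose only nontrivial ingredient is the inclusion ${\rm int}(\hat V_g(t))\subseteq V_g(t)$, itself proved by a viability (``stay-in-the-tube'') construction. For $(i)$: any $\upsilon\in\mathfrak{C}(t,x,y)$ with $Y_T^{t,x,y,\upsilon}=g(X_T^{t,x,\upsilon})$, $\P$--a.s., a fortiori satisfies $Y_T^{t,x,y,\upsilon}\geq g(X_T^{t,x,\upsilon})$ and $Y_T^{t,x,y,\upsilon}\leq g(X_T^{t,x,\upsilon})$, $\P$--a.s., so $V_g(t)\subseteq V_g^-(t)\cap V_g^+(t)\subseteq\overline{V_g^-(t)}\cap\overline{V_g^+(t)}=\{(x,y):w^-(t,x)\leq y\leq w^+(t,x)\}=\hat V_g(t)$; no regularity of $w^\pm$ is needed here.

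For $(ii)$, assume $\delta_\eps>0$ for all $\eps>0$ and $w^\pm$ continuous. Since $w^\pm$ are continuous, $\hat V_g(t)$ is closed, so $(i)$ gives $\overline{V_g(t)}\subseteq\hat V_g(t)$. For $t<T$, applying the hypothesis with any $\eps\in(0,T-t)$ yields $w^+(t,\cdot)-w^-(t,\cdot)\geq\delta_\eps>0$, so $\hat V_g(t)$ is the closed region between two continuous graphs and hence equals $\overline{{\rm int}(\hat V_g(t))}$ (for $t=T$ one has $\hat V_g(T)=\{(x,y):y=g(x)\}=V_g(T)$ directly, using $w^\pm(T^-,\cdot)=g$). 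It therefore suffices to prove ${\rm int}(\hat V_g(t))\subseteq V_g(t)$: granting this, $\hat V_g(t)=\overline{{\rm int}(\hat V_g(t))}\subseteq\overline{V_g(t)}\subseteq\hat V_g(t)$, which simultaneously gives $\overline{V_g(t)}=\hat V_g(t)$ and the interior inclusion.

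To prove ${\rm int}(\hat V_g(t))\subseteq V_g(t)$, fix $(x_0,y_0)$ with $w^-(t,x_0)<y_0<w^+(t,x_0)$. The idea is to build $\upsilon$ keeping the controlled state in the closed tube, \textit{i.e.} $w^-(s,X_s)\leq Y_s\leq w^+(s,X_s)$ for all $s\in[t,T)$; since $w^\pm(T^-,\cdot)=g$ and $s\mapsto Y_s-w^\pm(s,X_s)$ is continuous, this forces $Y_T=g(X_T^{t,x_0,\upsilon})$, \textit{i.e.} $(x_0,y_0)\in V_g(t)$. Note first that, as the drift and diffusion coefficients of $Y^{t,x,y,\upsilon}$ in \eqref{eq.state.strongformulation} do not depend on $y$, one has $Y^{t,x,y,\upsilon}=Y^{t,x,y',\upsilon}+(y-y')$; consequently a control that keeps $Y$ above $w^-(\cdot,X)$ when started \emph{on} the lower boundary keeps it above from any higher start, and symmetrically for the upper boundary. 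By the geometric dynamic programming principle of \cite{soner2002dynamic} extended to the closure of the one-sided reachability sets (as exploited in \cite{bouchard2009stochastic,bouchard2010optimal}), combined with a measurable selection, one obtains jointly measurable families $\upsilon^{\rm lo}(\theta,x)$ and $\upsilon^{\rm up}(\theta,x)$ whose controlled states satisfy, respectively, $Y_s\geq w^-(s,X_s)$ and $Y_s\leq w^+(s,X_s)$ on their horizon. I would then define $\upsilon$ by concatenation: start from $(x_0,y_0)$ with $\upsilon^{\rm lo}(t,x_0)$; set $\tau_1\coloneqq\inf\{s\geq t:Y_s\geq w^+(s,X_s)\}\wedge T$, at which $Y_{\tau_1}=w^+(\tau_1,X_{\tau_1})$ by continuity, and switch to $\upsilon^{\rm up}(\tau_1,X_{\tau_1})$; set $\tau_2\coloneqq\inf\{s\geq\tau_1:Y_s\leq w^-(s,X_s)\}\wedge T$ and switch back to $\upsilon^{\rm lo}(\tau_2,X_{\tau_2})$; iterate. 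By construction $Y_s\in[w^-(s,X_s),w^+(s,X_s)]$ for every $s\in[t,T)$, and the terminal equality follows by letting $s\uparrow T$.

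The main obstacle is making this construction rigorous. Three points require care: $(a)$ the existence of the viability controls $\upsilon^{\rm lo},\upsilon^{\rm up}$ up to the \emph{closure} of the one-sided reachability sets is exactly the content of the stochastic target theory behind \eqref{eq:pde-inf}--\eqref{eq:pde-sup}, and their selection must be made jointly measurable in $(\theta,x)$; $(b)$ the concatenated control must be shown to belong to $\mathfrak{C}$, \textit{i.e.} to be $\F$-predictable, independent of $\Fc_t$ (which holds since $(X,Y)$ and all the $\tau_k$ depend only on the Brownian increments posterior to $t$), and to retain the $\H^p$, $\G^p$, $\L^p$ integrability after the gluing; and $(c)$ one must rule out accumulation of the switching times before $T$ --- and this is precisely where $\delta_\eps>0$ enters: if $\tau_k\uparrow\tau_\infty<T$, continuity would force $w^-(\tau_\infty,X_{\tau_\infty})=Y_{\tau_\infty}=w^+(\tau_\infty,X_{\tau_\infty})$, contradicting the strict separation $w^-(\tau_\infty,\cdot)<w^+(\tau_\infty,\cdot)$ guaranteed by $\delta_\eps>0$; accumulation \emph{at} $T$ is harmless since the squeeze still gives $Y_T=g(X_T)$. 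The same separation underlies the identity $\hat V_g(t)=\overline{{\rm int}(\hat V_g(t))}$ used in the reduction.
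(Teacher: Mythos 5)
Your part $(i)$ and your reduction of $(ii)$ to the inclusion ${\rm int}(\hat V_g(t))\subseteq V_g(t)$ match the paper. The gap is in the core switching construction. You define the switching times as the \emph{exact} hitting times of the graphs $\{y=w^\pm(s,x)\}$ and then invoke a control $\upsilon^{\rm up}(\tau_1,X_{\tau_1})$ (resp.\ $\upsilon^{\rm lo}$) started \emph{on} the boundary. But the stochastic target theory you cite only characterises the \emph{closures} $\overline{V_g^\pm(t)}=\{\pm(y-w^\pm(t,x))\leq 0\}$; a point with $Y_{\tau_1}=w^+(\tau_1,X_{\tau_1})$ is guaranteed to lie in $\overline{V_g^+(\tau_1)}$ but not in $V_g^+(\tau_1)$ itself (the infimum/supremum defining $w^\pm$ need not be attained), so the admissible one-sided viability control you need at the switching time may simply not exist. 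Your remark $(a)$ asserts that existence ``up to the closure'' is the content of the theory, but it is not --- this is precisely the point where the argument breaks.

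The paper's proof avoids this by never letting the state reach the opposite boundary before switching: the stopping times are defined as the first time the distance to that boundary drops to a strictly positive buffer $\delta_{n^{-1}}/N_k$, so that at each switch the state is still in the open tube, where membership in \emph{both} $V_g^-(\tau)$ and $V_g^+(\tau)$ (not just their closures) is guaranteed and \cite[Corollary 2.1]{bouchard2010optimal} supplies the next control consistent with the past. This is also why the hypothesis $\delta_\eps>0$ enters the paper's argument in a stronger way than in your proposal (you use it only to preclude accumulation of switching times): since the tube pinches at $T$ (as $w^\pm(T^-,\cdot)=g$), the buffer must be recalibrated on each interval $[T_n,T_{n+1}]$ with $T_n=T-n^{-1}$, which is the purpose of the paper's two-layer (in $k$ and then in $n$) construction. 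Your non-accumulation argument $(c)$ and your admissibility discussion $(b)$ are fine as far as they go, but the construction they are meant to support is not well founded without replacing the exact hitting times by the buffered ones.
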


\begin{remark}
	Let us provide a sufficient structural condition for the assumption $\delta_\eps>0$ for any $\eps>0$, before presenting the proof of {\rm \Cref{lemma.boundaries}}. We claim that it holds if {\rm PDE} \eqref{eq:pde-inf} satisfies a comparison principle, as we will establish in {\rm \Cref{sec.appen.comp.boundaries}}, and there is $\eta>0$ such that
	\begin{equation}\label{eq:sufficient-delta-eps}
		H^+(t,x,p,Q) \geq H^-(t,x,p,Q) + \eta, \; \forall (t,x,p,Q)\in [0,T]\times\R^\xdim\times\R^\xdim\times\S^\xdim.
	\end{equation}
	Indeed, under this condition, it is easy to see that the function $\hat{w}^-(t,x)\coloneqq  w^-(t,x) + \eta (T-t)$ is a discontinuous viscosity sub-solution to {\rm PDE} \eqref{eq:pde-inf}. Therefore, from the comparison principle we have $\hat{w}^- \leq w^+$, which implies $\delta_\eps >0$ for any $\eps>0$. A similar argument works if {\rm PDE} \eqref{eq:pde-sup} satisfies a comparison principle instead.
\end{remark}

\begin{proof}[Proof of {\rm \Cref{lemma.boundaries}}]
	Let us argue $(i)$. 
	Let $(x,y)\in V_g(t)$, then there exists $\upsilon \in \mathfrak{C}(t,x,y)$ such that $Y_T^{t,x,y,\upsilon } = g(X_T^{t,x,y,\upsilon }), \; \P\as$ 
	Then it is clear that $(x,y)$ belongs to both auxiliary sets $ V_g^-(t)$ and $V_g^+(t)$, that is, $(x,y)\in V_g^-(t) \cap V_g^+(t)$. 
	Since $\hat{V}_g(t)= \overline{V_g^-(t)} \cap  \overline{V_g^+(t)}$, it follows that $V_g(t)\subseteq \hat{V}_g(t)$.
	\smallskip
	
	As for $(ii)$, we first note that the second part of the statement, \emph{i.e.} $\overline{V_g(t)} = \hat{V}_g(t)$, follows from the inclusions $\text{int}(\hat{V}_g(t)) \subseteq V_g(t) \subseteq \hat{V}_g(t)$ by taking closure. 
	Let us now argue $\text{int}(\hat{V}_g(t)) \subseteq V_g(t)$.
	To increase the readability of the proof, given $(t,x,y)\in [0,T]\times \R^{\xdim+1}$ and $\upsilon \in \Cf(t,x,y)$, we will say that $\upsilon$ satisfies $(U)$ or $(L)$ whenever $Y_T^{t,x,y,\upsilon} \geq  g(X_T^{t,x,\upsilon}),\; \P\as$, or, $Y_T^{t,x,y,\upsilon} \leq  g(X_T^{t,x,\upsilon}),\; \P\as$, respectively.
	Let $t\in [0,T]$ and $(x,y)\in {\rm int}(\hat{V}_G(t))$.
	We argue in 2 steps.\smallskip
	
	{\bf Step 1.} We fix $n\in\N^\star$ and construct an admissible control up to $T_n\coloneqq T- n^{-1}$.
	Since $(x,y)\in {\rm int}(\hat{V}_G(t))$, by continuity, we have that $w^-(t,x) < y < w^+(t,x)$.
	Thus, in particular, there is $\upsilon^{0,n}  \in \Cf(t,x,y)$ satisfying $(U)$.
	Let $X^{0,n}\coloneqq  X^{t,x,\upsilon^{\smallfont{0}\smallfont{,}\smallfont{n}}}, Y^{0,n} \coloneqq Y^{t,x,y,\upsilon^{\smallfont{0}\smallfont{,}\smallfont{n}}}$.
	By \cite[Corollary 2.1]{bouchard2010optimal}, $Y^{{0},n}_s \geq w^-(s,X^{ 0,n}_s)$, $s\in [t,T]$.
	\textcolor{black}{We have two cases.
		If $Y^{0,n}_{s} = w^-(s,X^{0,n}_{s})$ for some $s\in[t,T]$, by definition of $w^-$, we find that $(x,y)\in V_g(t)$ as desired and conclude the proof.
		Otherwise, we have that $Y^{0,n}_s > w^-(s,X^{0,n}_s)$, $s\in [t,T]$.
		Let $\gamma_0\coloneqq \inf_{s\in[t,T]} \{Y_s^{0,n}-w^-(s,X_s^{0,n})\}$, and note that $\gamma_0>0$, $\P\as$, thanks to the $\omega$-by-$\omega$ continuity of $[t,T] \ni s\longmapsto Y_s^{0,n}-w^-(s,X_s^{0,n})$. 
		Thus, there is random variable $N_0$ with values in $\N$ such that $\delta_{n^{-1}}/N_0<\gamma_0,\; \P\as$}	 
	We then define the sequence of $\F$--stopping times $(\tau_k^n)_{ k\in\{0,\dots, k(n)\}}$, with $k(n)\in\N$ to be defined, recursively as follows
	\begin{align*}
		\tau_0^n& \coloneqq \inf \big \{ s\geq t: w^+\big (s,X^{0,n}_s\big)-Y^{0,n}_s \leq  \delta_{n^{\smallfont{-}\smallfont{1}}}/{N_0}\big \}\wedge T_n.
	\end{align*}
	If $\tau_0^n=T_n$, we set $k(n)=0$ and conclude the construction. 
	Otherwise, by continuity, we have that $w^+\big (\tau_0^n, X_{\tau_\smallfont{0}^\smallfont{n}}^{0,n} \big)-Y_{\tau_\smallfont{0}^\smallfont{n}}^{0,n}= \delta_{n^{\smallfont{-}\smallfont{1}}}/N_0 $.
	By definition of $\delta_\eps$, we have that
	\begin{align}\label{eq.lemmasetV4}
		\big(X_{\tau_\smallfont{0}^\smallfont{n}}^{0,n},Y_{\tau_\smallfont{0}^\smallfont{n}}^{0,n}\big)\in {\rm int}\big(\hat{V}_g(\tau_0^n)\big),\; \P\as,\; \text{\emph{i.e.}}\; w^-\big (\tau_0^n, X_{\tau_\smallfont{0}^\smallfont{n}}^{0,n} \big)< Y_{\tau_\smallfont{0}^\smallfont{n}}  < w^+\big (\tau_0^n, X_{\tau_\smallfont{0}^\smallfont{n}}^{0,n} \big),\; \P\as
	\end{align}
	Thus, by \cite[Corollary 2.1]{bouchard2010optimal}, there is $\upsilon^{1,n} \in\Cf(t,x,y)$, satisfying $(L)$ and $\upsilon^{1,n}=\upsilon^{0,n}$ on $[t,\tau_0^n)$. Let now
	\begin{align*}
		\tau_{1}^n& \coloneqq \inf \big \{ s\geq \tau_0^n: Y_s^{1,n} - w^-(s,X_s^{1,n})\leq  \delta_{n^{\smallfont{-}\smallfont{1}}}/N_1 \big \}\wedge T_n,
	\end{align*}
	for $X^{1,n}\coloneqq  X^{t , x , \upsilon^{\smallfont{1}\smallfont{,}\smallfont{n}}}, \; Y^{1,n}\coloneqq Y^{t,x,y, \upsilon^{\smallfont{1}\smallfont{,}\smallfont{n}}}$
	and {$N_1$ defined so that $\delta_{n^{-1}}/N_1<\gamma_1$, with $\gamma_1\coloneqq\inf_{s\in [\tau_0^n,T]}\{ w^+(s,X_s^{1,n})-Y_s^{1,n}\}$}. 
	Arguing as above, by definition of $\tau_1^n$, we find that $(X_{\tau_\smallfont{1}^\smallfont{n}}^{1,n},Y_{\tau_\smallfont{1}^\smallfont{n}}^{1,n})\in {\rm int}\big(\hat{V}_g(\tau_1^n)\big),\; \P\as$
	Thus, again by \cite[Corollary 2.1]{bouchard2010optimal}, there is $\upsilon^{2,n} \in\Cf(t,x,y)$ such that $(U)$ holds and $\upsilon^{2,n}=\upsilon^{1,n}$ on $[\tau_0^n,\tau_1^n)$.
	Recursively, for $k\in\N^\star$ we let $X^{k,n}\coloneqq  X^{t,x , \upsilon^{\smallfont{k}\smallfont{,}\smallfont{n}}},\; Y^{k,n}\coloneqq Y^{t,x,y , \upsilon^{\smallfont{k}\smallfont{,}\smallfont{n}}}$
	\begin{align*}
		\tau_{2k}^n& \coloneqq \inf \big \{ s\geq \tau_{2k-1}^n: w^+ (\tau_{k-1}^n,X_s^{k,n})-Y_s^{k,n}\leq   \delta_{n^{\smallfont{-}\smallfont{1}}}/N_{2k} \big \}\wedge T_n, \\
		\tau_{2k+1}^n& \coloneqq \inf \big \{ s\geq \tau_{2k}^n: Y_s^{k,n }- w^- (\tau_{k-1}^n,X_s^{k,n})\leq   \delta_{n^{\smallfont{-}\smallfont{1}}}/N_{2k+1} \big \}\wedge T_n, 
	\end{align*}
	and find $\upsilon^{k+1,n}\in \mathfrak{C}(t,x,y)$ for which $(X_{\tau_\smallfont{k}^\smallfont{n}}^{k,n},Y_{\tau_\smallfont{k}^\smallfont{n}}^{k,n})\in {\rm int}\big(\hat{V}_g(\tau_k^n)\big),\; \P\as$ We now claim that there is a process $k(n)$ with values in $\N$ such that $\tau^n_{k(n)}=T_n$, $\P\as$ 
	Indeed, by continuity of $w^-$ and $w^+$, the mappings 
	\[
	[t,T_n]\ni s\longmapsto w^+\big (s,X_s^{t,x, \upsilon }\big)-Y_s^{t,x,y,  \upsilon}, \;\text{and}\; [t,T_n]\ni s\longmapsto  Y_s^{t,x ,y ,\upsilon} - w^-\big(s,X_s^{t,x,\upsilon} \big),
	\]
	are, $\omega$-by-$\omega$, uniformly continuous for any $\upsilon\in \Cf(t,x,y)$. Hence, there exists a constant $\bar \gamma_n>0$ and a $[\bar \gamma_n, T_n]$-valued random variable $\gamma_n$ such that, $\|\tau_k^n-\tau_{k-1}^n\|_\infty>\gamma_n,$ $\P\text{\rm --a.s.},\; k\in \N$. This proves the claim. At the end of this construction, we set $\upsilon^n\coloneqq \upsilon^{k(n),n}$, 
	and notice that $\upsilon^{n} \in  \Cf(t,x,y)$ and
	\begin{align}\label{eq.lemmasetV1}
		w^-\big (T_n, X_{T_\smallfont{n}}^n  \big)< Y_{T_\smallfont{n}}^n < w^+\big (T_n, X_{T_\smallfont{n}}^n \big),\; \P\as, \text{ for } X^n\coloneqq  X^{t,x,\upsilon^\smallfont{n}}, Y^n\coloneqq  Y^{t,x,y,\upsilon^\smallfont{n}}.
	\end{align}

	{\bf Step 2.} We iterate the previous construction. 
	From here on, we can repeat {\bf Step 1}, with $(T_n, X_{T_\smallfont{n}}^n )$, control $\upsilon^n$, and $n+1$ playing the role of $(t,x)$, $\upsilon^{0,n}$ and $n$, respectively. 
	With this, we obtain the existence of $\upsilon^{n+1} \in \Cf(t,x,y)$, such that, by uniform continuity, \eqref{eq.lemmasetV1} holds at $(T_{n+1},X^{n+1}_{T_{\smallfont{n}\smallfont{+}\smallfont{1}}})$ and $Y^{n+1}_{T_{\smallfont{n}\smallfont{+}\smallfont{1}}}$.
	Iterating this construction, we find $\upsilon$ which is well-defined $\d t\otimes \d \P\ae$ on $[0,T]\times\Omega$.\footnote{Indeed, the construction allows us to define said process $\d t\otimes\d \P\ae$ on $[t,T)\times \Omega$, and consequently, $\d t\otimes \d \P\ae$ on $[0,T]\times\Omega$.}\smallskip
	
	To conclude $(x,y)\in V_g(t)$, let $n\longrightarrow \infty$ in \eqref{eq.lemmasetV1}, and notice that by continuity of $w^-$ and $w^+$ we have that $Y_{T}^{t,x,y,\upsilon}=g(X_T^{t,x,\upsilon})$ as desired.\qedhere
\end{proof}

\subsection{PDE characterisation for the problem of the leader}\label{sec.verification.leader}

We begin our analysis by establishing a verification theorem for the solutions to PDEs \eqref{eq:pde-inf} and \eqref{eq:pde-sup}.
That is, for the boundaries of the domain of Problem \ref{eq:leader-strong-problem-reformulated-2}.
The proof is deferred to \Cref{sec.appen.comp.boundaries} and relies on establishing a comparison theorem for the solutions to PDEs \eqref{eq:pde-inf} and \eqref{eq:pde-sup} by classical arguments.

\begin{theorem}\label{thm.aux.func}
	Let $u$ and  $v$ be continuous viscosity solutions to \eqref{eq:pde-inf} and \eqref{eq:pde-sup}, respectively, with linear growth. Then $u=w^-$ and $v=w^+$. 
\end{theorem}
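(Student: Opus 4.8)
The plan is to derive \Cref{thm.aux.func} from a comparison principle for the two (degenerate) parabolic equations \eqref{eq:pde-inf} and \eqref{eq:pde-sup}, combined with the facts recalled above from \cite{soner2002stochastic,bouchard2009stochastic,bouchard2010optimal}: namely that $w^-$ (resp. $w^+$) is already a \emph{discontinuous} viscosity solution of \eqref{eq:pde-inf} (resp. \eqref{eq:pde-sup}) with the relaxed terminal condition $w(T^-,\cdot)=g$. First I would record the a priori bounds on $w^\pm$. Under \Cref{assumption.strongtarget} the coefficients $\sigma^{b^\smallfont{\star}}$ and $(\sigma\lambda)^{b^\smallfont{\star}}$ are Lipschitz in $x$ uniformly in the remaining arguments and in $b^\star\in\Bc^\star$, so the state processes $X^{t,x,\upsilon}$ obey the usual moment estimates with linear growth in $x$; since the data $c,g,C,G$ are of at most linear growth, the very definitions \eqref{eq:auxiliary-w1} and \eqref{eq:auxiliary-w2} of $w^\pm$ yield that both functions have linear growth, hence they belong to the same class as the candidate solutions $u,v$.

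The core of the argument is the comparison theorem, which I would establish by the classical Crandall--Ishii--Lions doubling-of-variables technique. Fix an upper-semicontinuous subsolution $U$ and a lower-semicontinuous supersolution $V$ of \eqref{eq:pde-inf}, both with linear growth and with $U(T^-,\cdot)\le g\le V(T^-,\cdot)$. After the usual perturbation of $U$ by a small negative term that turns it into a strict subsolution and controls the behaviour at $t=T$, and a localisation by the penalty $\pm\eps(1+|x|^2)$ which — thanks to the linear growth — confines the maximum of $U-V$ to a compact set, one doubles the space variable with $|x-x'|^2/(2\varepsilon)$, invokes Ishii's lemma, and subtracts the two viscosity inequalities. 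Two structural properties of $H^-$ make the terms close. First, $H^-$ is nondecreasing in the Hessian slot $Q$: indeed $(\sigma\sigma^\t)^{b^\smallfont{\star}}\succeq 0$, and the infimum over $(a,z,\gamma,b^\star)\in N(t,x,p)=A\times\{p\}\times\S^\xdim\times\Bc^\star$ (see \eqref{eq.optmatchvol}) preserves monotonicity. Second, writing $z=p$ on $N$, one has a modulus-of-continuity estimate of the form
\[
H^-(t,x,p,Q)-H^-(t,x',p',Q')\le \omega\Big(|x-x'|\big(1+|p|\big)+|p-p'|+\tfrac1\varepsilon|x-x'|^2\Big)
\]
whenever $(Q,Q')$ are produced by Ishii's lemma. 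This last estimate is exactly where \Cref{assumption.strongtarget} enters: the Lipschitz-continuity of $c$, $\sigma$, $\sigma\lambda$ in $(x,b)$ together with the \emph{uniform} Lipschitz-continuity of every $b^\star\in\Bc^\star$ in $(x,z)$ lets one bound the variation of each $h^{\rm b}$-term — including the quadratic term via the matrix inequality of Ishii's lemma applied to the Lipschitz map $x\mapsto\sigma^{b^\smallfont{\star}}$ — uniformly over the index set of the infimum. The identical argument, with the infimum replaced by the supremum, handles \eqref{eq:pde-sup}.

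Once comparison is available, the conclusion is immediate. The upper- and lower-semicontinuous envelopes $(w^-)^*$ and $(w^-)_*$ are respectively a subsolution and a supersolution of \eqref{eq:pde-inf}, so comparison forces $(w^-)^*\le(w^-)_*$, i.e. $w^-$ is continuous and is \emph{the} continuous viscosity solution; applying comparison twice more, with $(u,w^-)$ and then $(w^-,u)$ in the roles of subsolution and supersolution, gives $u=w^-$. The same reasoning applied to \eqref{eq:pde-sup} yields $v=w^+$.

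The step I expect to be the genuine obstacle is twofold and concerns precisely the nonstandard features of the Hamiltonians. On the one hand, one must obtain the modulus estimate above with $\omega$ \emph{uniform over} $\Bc^\star$, which is why the hypothesis on $\Bc^\star$ in \Cref{assumption.strongtarget} is phrased as a uniform-in-$(t,\gamma,a)$ Lipschitz bound rather than a mere continuity statement. On the other hand, since the constraint set $N(t,x,p)$ only pins down $z=p$ while leaving $\gamma\in\S^\xdim$ (and hence the selected $b^\star(t,x,p,\gamma,a)\in B$) free, one must rule out any face-lifting or boundary-layer phenomenon that would prevent $g$ from being attained at $t=T$ in the usual parabolic sense; here the invertibility and boundedness of $\sigma\sigma^\t$ from \Cref{Assumption.data} are what allow one to reduce the analysis near $T$ to a standard terminal-value comparison on $[0,T]$. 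These points are carried out in detail in \Cref{sec.appen.comp.boundaries}.
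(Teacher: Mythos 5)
Your proposal is correct and follows essentially the same route as the paper: a Crandall--Ishii doubling-of-variables comparison principle for \eqref{eq:pde-inf} and \eqref{eq:pde-sup} (the paper's \Cref{lemma-comparison}, where \Cref{assumption.strongtarget} enters exactly as you describe, to control the Hamiltonian increments uniformly over the index set of the infimum/supremum), combined with the fact from \cite{bouchard2009stochastic} that the semicontinuous envelopes of $w^\pm$ are sub/super-solutions attaining $g$ at $T$ without face-lifting (the paper's \Cref{prop:bounded-aux-value}), whence continuity of $w^\pm$ and then $u=w^-$, $v=w^+$ by two further applications of comparison. The only cosmetic difference is that the paper derives \emph{boundedness} of $w^\pm$ directly from the bounded data of \Cref{Assumption.data.2} via an explicit super-martingale argument, rather than the linear-growth moment estimates you invoke; both suffice to place $w^\pm$ in the comparison class.
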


Having conducted the analysis of the auxiliary boundary functions $w^-$ and $w^+$, we are in a position to provide a verification theorem for Problem \ref{eq:leader-strong-problem-reformulated-2}.
\Cref{thm.verification} below provides a PDE characterisation for the intermediate problem of the leader under the CL information structure.
Let us remark that once $V(t,x,y)$ is found it only remains to optimise over $y\in \R$.
\smallskip

To ease the notation, we will use ${\rm x}\in \R^{\xdim+1}$ and $u\in A\times \R^\xdim\times\S^\xdim \times \Bc^\star\eqqcolon U$, to denote the values of the state and control processes associated with Problem \ref{eq:leader-strong-problem-reformulated-2}, that is, we make the convention ${\rm x}=(x,y)$ and $u=(a,z,\gamma,b^\star)$. In this way, recall \eqref{eq.convention.b}, we let $C(t,{\rm x},u)\coloneqq C^{b^\smallfont{\star}\!}(t,x,a,z,\gamma)$ and similarly for the other functions in the analysis. Moreover, we denote the drift and volatility coefficients, $(\mu,\vartheta):[0,T]\times  \R^{\xdim+1}\times  U \longrightarrow \R^{\xdim+1} \times \R^{(\xdim+1)\times \bmdim} $ associated with the state process ${\rm X}\coloneqq (X,Y)$ by
\begin{align*}
	\mu(t,{\rm x}, u)\coloneqq \begin{pmatrix}
		\sigma \lambda  (t,x,u)\\
		-c  (t,x,u)
	\end{pmatrix},\;
	\vartheta(t,{\rm x},u)\coloneqq \begin{pmatrix}
		\sigma  (t,x,u) \\
		z\cdot \sigma (t,x,u)
	\end{pmatrix}.
\end{align*}
Given $\textup{w}\in C^{1,2}([0,T)\times\R^\xdim)$, we define the sets
\begin{align*}
	U^{-}(t, x,\textup{w})&\coloneqq \big\{ u\in U:  \sigma^\t  (t,x,u) ( z- \partial_x \textup{w}(t,x))=0,\; -\partial_t \textup{w} - h^{\rm b}(t,x,\partial_x \textup{w}, \partial_{xx}^2 \textup{w}, u)  \geq 0\big\},\\
	U^+(t,x,\textup{w})&\coloneqq \big\{ u\in U:  \sigma^\t  (t,x,u) (z-  \partial_x \textup{w}(t,x))=0, \; -\partial_t \textup{w} - h^{\rm b}(t,x,\partial_x\textup{w}, \partial_{xx}^2 \textup{w}, u) \leq 0\big\},
\end{align*}
and, for $i\in\{-,+\}$, introduce the Hamiltonians $(H^{\rm L},H^{i,w}):[0,T]\times  \R^{\xdim+1}\times \R^{\xdim+1} \times \S^{\xdim+1} \longrightarrow \R$, given by
\begin{align}\label{eq.HamiltonianLeader}
	\begin{split}
		{H}^{\rm L}(t,{\rm x} ,{\rm p},{\rm Q}) &\coloneqq \sup_{u\in U} \big\{  {h}^{\rm L} (t,{\rm x},{\rm p}, {\rm Q}, u)\big\},\\ {\rm H}^{i,w}(t,{\rm x} ,{\rm p},{\rm Q}) &\coloneqq \sup_{u\in U^\smallfont{i}(t, x,w)} \big\{ {h}^{\rm L}(t,{\rm x},{\rm p}, {\rm Q}, u)\big\},
	\end{split}
\end{align}
where
\[ {h}^{\rm L}(t,{\rm x},{\rm p}, {\rm Q},u)\coloneqq C(t,x,u)+ \mu(t,{\rm x}, u)\cdot {\rm p}+\frac{1}2 {\rm Tr}[\vartheta\vartheta^\t(t, {\rm x}, u) {\rm Q}].\]

Below, $\Tc_T$ denotes the family of $\F$--stopping times with values on $[0,T]$.
With this, we have all the elements necessary to state our main result, which is the following verification theorem. 
\begin{theorem}\label{thm.verification}
	\begin{enumerate}[label=$(\roman*)$, ref=.$(\roman*)$,wide,  labelindent=0pt]
		\item 
		Let $\textup{w}^i \in C^{1,2}([0,T)\times\R^\xdim)\cap C^{0}([0,T]\times\R^\xdim)$, $i\in\{-,+\}$, be solutions to \eqref{eq:pde-inf} and \eqref{eq:pde-sup}, respectively with linear growth. Then, $\textup{w}^i(t,x) =w^i(t,x)$, for $w^i(t,x)$ given by \eqref{eq:auxiliary-w1} or \eqref{eq:auxiliary-w2}, respectively.
		\item Moreover, if $v\in C^{1,2}([0,T)\times\R^{\xdim}\times  [\textup{w}^-,\textup{w}^+])\cap C^{0}([0,T]\times\R^{\xdim}\times  [\textup{w}^-,\textup{w}^+])$\footnote{Here $[0,T]\times \R^\xdim\times [\textup{w}^-,\textup{w}^+]\coloneqq \{ (t,x,y) \in [0,T]\times\R^\xdim\times \R:  \textup{w}^-(t,x)\leq y\leq \textup{w}^+(t,x)  \}$.} satisfies
		\begin{align}\label{eq.fullHJB}
			\begin{cases}
				- \partial_t v-{\rm H}^{\rm L}(t,{\rm x},\partial_{\rm x} v, \partial_{\rm xx}^2 v)=0, \;(t,x,y)\in  [0,T)\times \R^\xdim\times (\textup{w}^-(t,x),\textup{w}^+(t,x)),\\
				-\partial_t v-{\rm H}^{i,w^i}(t,{\rm x},\partial_{\rm x} v, \partial_{\rm xx}^2 v)=0, \;(t,x,y)\in  [0,T)\times \R^\xdim\times \{ \textup{w}^i(t,x)\},\;  i\in\{-,+\},\\
				v(T^-,{\rm x})=G(x),\; (x,y)\in \R^{\xdim}\times\{g(x)\}.
			\end{cases}
		\end{align}
		Moreover, suppose that
		\begin{itemize}
			\item the family $\{v(\tau,{X}_\tau^\upsilon,Y_\tau^\upsilon)\}_{\tau\in \Tc_T}$ is uniformly integrable for all controls $\upsilon \in \Cf;$
			\item there exists $\upsilon^\star:[0,T]\times \R^d\times  [\textup{w}^-,\textup{w}^+] \longrightarrow A\times \R^d\times \S^\xdim\times \Bc^\star$ attaining the maximisers in $H^{\rm L}$, $H^{i,\textup{w}^i}$, $i\in \{+,-\};$
			\item there is a unique strong solution to the system \eqref{eq.state.strongformulation} with control $(\alpha^\star_\cdot, Z_\cdot^\star,\Gamma_\cdot^\star,b^\star) \coloneqq \upsilon^\star(\cdot,X_\cdot,Y_\cdot);$
			\item $(\alpha^\star, Z^\star,\Gamma^\star,b^\star) \in \Cf$.
		\end{itemize}
		Then, $V(t,x,y)=v(t,x,y)$, and $(\alpha^\star, Z^\star,\Gamma^\star,b^\star)$ is an optimal control for the problem $V(t,x,y)$.
	\end{enumerate}
\end{theorem}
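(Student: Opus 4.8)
The plan is to establish $(i)$ directly from the comparison principle already obtained for the boundary PDEs, and then to prove $(ii)$ by the standard two–sided verification argument for stochastic target problems, where the only genuinely new bookkeeping is the behaviour of the drift of $v$ along the two boundaries $\{y=\textup{w}^i\}$. For $(i)$, since $\textup{w}^-$ and $\textup{w}^+$ are continuous viscosity solutions of linear growth to \eqref{eq:pde-inf} and \eqref{eq:pde-sup}, \Cref{thm.aux.func} yields at once $\textup{w}^-=w^-$ and $\textup{w}^+=w^+$; in particular $\overline{V_g^-(t)}=\{y\geq w^-(t,x)\}$, $\overline{V_g^+(t)}=\{y\leq w^+(t,x)\}$, and the terminal traces satisfy $w^-(T^-,\cdot)=w^+(T^-,\cdot)=g$.

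For the inequality $V\leq v$, fix $(t,x,y)$ with $w^-(t,x)\leq y\leq w^+(t,x)$ and $\upsilon\in\Cf(t,x,y)$, and write $({\rm X},Y)=(X^{t,x,\upsilon},Y^{t,x,y,\upsilon})$, $u_s$ for the value of the control. Because the target constraint $Y_T=g(X_T)$ holds, the geometric dynamic programming principle \cite[Corollary 2.1]{bouchard2010optimal} gives $w^-(s,X_s)\leq Y_s\leq w^+(s,X_s)$ for all $s\in[t,T]$, $\P$–a.s. Apply Itô's formula to $s\longmapsto v(s,X_s,Y_s)+\int_t^s C(r,X_r,u_r)\,\mathrm{d}r$ along a localising sequence $(\tau_k)_k\subset\Tc_T$; its drift equals $\partial_t v+{h}^{\rm L}(s,{\rm X}_s,\partial_{\rm x}v,\partial_{\rm xx}^2 v,u_s)$. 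On the set of $(s,\omega)$ where $w^-(s,X_s)<Y_s<w^+(s,X_s)$, the interior equation in \eqref{eq.fullHJB} and ${h}^{\rm L}\leq H^{\rm L}$ make this drift non-positive. On $\{(s,\omega):Y_s=w^i(s,X_s)\}$, the semimartingale $s\longmapsto Y_s-w^i(s,X_s)$ (Itô applies since $w^i\in C^{1,2}$) keeps a constant sign, so by the classical fact that a non-negative continuous semimartingale has vanishing diffusion and drift of the correct sign $\mathrm{d}s\otimes\mathrm{d}\P$–a.e. on its zero set, one reads off exactly that $\sigma^\t(s,X_s,u_s)(Z_s-\partial_x w^i(s,X_s))=0$ and $\mp(\partial_t w^i+h^{\rm b})(s,X_s,\partial_x w^i,\partial_{xx}^2 w^i,u_s)\geq 0$, i.e. $u_s\in U^i(s,X_s,w^i)$; then the boundary equation in \eqref{eq.fullHJB} together with ${h}^{\rm L}\leq {\rm H}^{i,\textup{w}^\smallfont{i}}$ again gives a non-positive drift. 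Hence the stopped process is a supermartingale; the uniform-integrability hypothesis lets one send $k\to\infty$ and take $s=T$, and using the terminal condition $v(T^-,X_T,Y_T)=G(X_T)$ (licit since $Y_T=g(X_T)$) one obtains $v(t,x,y)\geq \E^{\P}[\int_t^T C(r,X_r,u_r)\,\mathrm{d}r+G(X_T)]$. Taking the supremum over $\upsilon\in\Cf(t,x,y)$ yields $V\leq v$.

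For $V\geq v$ and optimality, take the feedback control $\upsilon^\star=(\alpha^\star,Z^\star,\Gamma^\star,b^\star)=\upsilon^\star(\cdot,X_\cdot,Y_\cdot)$ built from the maximisers in $H^{\rm L}$, $H^{i,\textup{w}^\smallfont{i}}$; by assumption the system \eqref{eq.state.strongformulation} has a unique strong solution $({\rm X}^\star,Y^\star)$ with $(\alpha^\star,Z^\star,\Gamma^\star,b^\star)\in\Cf$, so it remains to check $\upsilon^\star\in\Cf(t,x,y)$, i.e. $Y^\star_T=g(X^\star_T)$. The processes $s\longmapsto Y^\star_s-w^-(s,X^\star_s)$ and $s\longmapsto w^+(s,X^\star_s)-Y^\star_s$ start non-negative, and whenever one of them vanishes the definition of $U^i$ (which the maximiser satisfies on the corresponding boundary) forces, exactly as above, a non-negative drift and a vanishing diffusion, so both remain non-negative; thus $(X^\star_s,Y^\star_s)\in\hat V_g(s)$ for every $s\in[t,T]$, and letting $s\uparrow T$ and using $w^-(T^-,\cdot)=w^+(T^-,\cdot)=g$ with continuity squeezes $Y^\star_T=g(X^\star_T)$. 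Repeating the Itô computation with $\upsilon^\star$, every inequality above is an equality because $\upsilon^\star$ attains the suprema, so $s\longmapsto v(s\wedge\tau_k,{\rm X}^\star_{s\wedge\tau_\smallfont{k}},Y^\star_{s\wedge\tau_\smallfont{k}})+\int_t^{s\wedge\tau_\smallfont{k}}C\,\mathrm{d}r$ is a martingale; uniform integrability then gives $v(t,x,y)=\E^{\P}[\int_t^T C(r,X^\star_r,u^\star_r)\,\mathrm{d}r+G(X^\star_T)]\leq V(t,x,y)$. Combining with the previous step, $V=v$ and $\upsilon^\star$ is optimal, and $\tilde V_{\rm L}$ is obtained by a final optimisation over $y\in\R$.

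The main obstacle is the boundary analysis, namely showing that on the random time set where the state sits on $\{y=\textup{w}^i\}$ the admissible control necessarily lies in $U^i$ — this rests on the $C^{1,2}$ regularity of $\textup{w}^i$ and on the characterisation of the zero set of a one-signed continuous semimartingale — and the symmetric statement that the feedback maximisers keep $({\rm X}^\star,Y^\star)$ inside the closed cone $\hat V_g$ up to time $T$, so that the target constraint is met automatically. Everything else — the localisation, the passage to the limit in $k$, and the identification of terminal values — is routine once the stated uniform-integrability assumption is in force, and the reachability input is already provided by \Cref{lemma.boundaries} and \cite[Corollary 2.1]{bouchard2010optimal}.
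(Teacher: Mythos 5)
Your proposal is correct and follows essentially the same route as the paper's proof: part $(i)$ is read off from \Cref{thm.aux.func}, and part $(ii)$ is the Itô/supermartingale argument that treats the interior and the two boundaries separately, extracts from the one-signed semimartingale $Y-\textup{w}^i(\cdot,X)$ that the control must lie in $U^i(\cdot,X,\textup{w}^i)$ on the boundary, and concludes by tightness of all inequalities for the feedback maximiser. The only differences are minor: where you argue $\mathrm{d}t\otimes\mathrm{d}\P$--a.e.\ on the zero set via local-time facts for non-negative continuous semimartingales, the paper decomposes the trajectory into boundary excursions via stopping times and applies Itô's formula to $\bar v(\cdot,\cdot)=v(\cdot,\cdot,w^-(\cdot,\cdot))$; and your explicit check that the candidate optimal feedback control keeps the state in $\hat V_g$ and hence satisfies $Y^\star_T=g(X^\star_T)$ addresses a feasibility point the paper leaves implicit, so it is an addition rather than a deviation.
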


\begin{remark}\label{rmk.verification}
	\begin{enumerate}[label=$(\roman*)$, ref=.$(\roman*)$,wide,  labelindent=0pt]
		\item 
		We remark that we could build upon one of the main results of {\rm \cite{bouchard2010optimal}} to characterise the functions $V$, $w^+$ and $w^-$ given by \eqref{eq:leader-strong-problem-reformulated-2}, \eqref{eq:auxiliary-w1}, and \eqref{eq:auxiliary-w2}, respectively, as viscosity solutions to---a relaxed version of---\eqref{eq:pde-inf}, \eqref{eq:pde-sup} and \eqref{eq.fullHJB}, respectively.
		In particular, if one can show that $V$, $w^+$, and $w^-$ are smooth and the associated Hamiltonians are continuous, the relaxation reduces to the above system.
		We refer to {\rm \cite{bouchard2010optimal}} for details.
		We have refrained from doing so as the above verification theorem gives the result most useful in solving any example in practice.
		In {\rm \Cref{ss:approach-example}}, we use the above result and search for solution to the above system directly.
		\item We also note that in the pessimistic case, see {\rm Remarks \ref{rem:nonuniq}} and {\rm \ref{rmk.reformulation.pes}}, we expect to arrive at
			\begin{align*}
			V^{\text{\scalefont{0.8}\textup{Pes}}}(t,x,y)
			\coloneqq  \sup_{ (\alpha,Z,\Gamma)\in\H^{\smallfont{p}}\times\H^\smallfont{p}\times\G^\smallfont{p}}\newinf_{b^\smallfont{\star}\in \Bc^\smallfont{\star}}  \bigg\{ & \E^{\P}    \bigg[   \int_t^T C^{b^\smallfont{\star}\!} (s,X_s^{t,x,\upsilon},\hat\upsilon _s ) \d s + G(X_T^{t,x,\upsilon})  \bigg]: \\ &\ (\alpha,Z,\Gamma,b^\star)\in \mathfrak{C}(t,x,y)\bigg\}.
			\end{align*}
			$V^{\text{\scalefont{0.8}\textup{Pes}}}(t,x,y)$ is the lower value of a zero-sum game with stochastic target constraints. 
			In this game, the leader seeks to maximise the criterion over the controls $(\alpha,Z,\Gamma)$ and the adversarial player seeks to minimise it over the follower's best responses $b^\star$.
			Crucially, the controls must lead to a state process $(X,Y)$ satisfying a stochastic target constraint i.e. $(\alpha,Z,\Gamma,b^\star)\in \mathfrak{C}(t,x,y)$.
			Then, if a geometric {\rm DPP} for zero-sum games is available, informally one would introduce the lower version of the Hamiltonians in \eqref{eq.HamiltonianLeader} leading to a lower version of the system in \eqref{eq.fullHJB}, or {\rm HJBI} system.
		\item It may happen that our Stackelberg equilibria end up being written as Markovian feedback functions of $t$ and $X_t$. More precisely, imagine that one manages to obtain, say, a smooth solution to the HJB equation for the leader's reformulated problem. This can then, thanks to the verification theorem above, lead to finding appropriate optimal controls 
\[
\alpha^\star_t\coloneqq a(t,X_t,Y_t),\; Z^\star_t\coloneqq z(t,X_t,Y_t),\; \Gamma^\star_t\coloneqq \gamma(t,X_t,Y_t).
\]
The question then boils down to whether $Y$ itself can be obtained as a Markovian functional of $X$. Though this does not have to be true in general, one way forward would be to consider that the triplet $(X^\star,Y,Z^\star)$ now solves the Markovian fully coupled FBSDE
\begin{align*}
	\begin{split}
		X_t^{\star} & = x +  \int_0^t  (\sigma \lambda)^{b^{\star}\!} \big(s,X_s^{\star}, a(s,X_s^\star,Y_s),Z^\star_s,\gamma(s,X_s^\star,Y_s) \big) \d s \\
		&\quad+ \int_0^t \sigma^{b^{\star}\!} (s,X_s^{\star},a(s,X_s^\star,Y_s),Z^\star_s,\gamma(s,X_s^\star,Y_s) ) \mathrm{d}W_s,\; t\in[0,T],\\
		Y_t & = g(X^\star_T)  +\int_t^T c^{b^{\star}\!} \big(s,X_s^{\star},a(s,X_s^\star,Y_s),Z^\star_s,\gamma(s,X_s^\star,Y_s) \big) \mathrm{d}s\\
		&\quad - \int_t^T Z^\star_s \cdot\sigma ^{b^{\star}\!}\big(s,X_s^{\star},a(s,X_s^\star,Y_s),Z^\star_s,\gamma(s,X_s^\star,Y_s) \big) \mathrm{d}W_s,\; t\in[0,T].
	\end{split}
\end{align*}
It is allegedly an extremely complex object in general, but if it were to happen that it is well-posed, then we could identify $Y$ as its unique solution, and if a decoupling field were to also exist (see \cite{ma2015well}), then we would automatically have that $Y_t=v(t,X^\star_t)$ for some map $v$. In this case, our Stackelberg equilibrium would actually be feedback. It is however unclear to us whether the above strategy can always be implemented.
	\end{enumerate}
\end{remark}

\begin{proof}
	We begin noticing that $(i)$ follows from \Cref{thm.aux.func}. From now on, we use $w$ and $\textup{w}$ interchangeably.
	We now argue $(ii)$. Let $t\in [0,T]$, $(x,y)\in V_g(t)$, $\upsilon  \in \mathfrak{C}(t,x,y)$, and $(X,Y)\coloneqq (X^{t,x,\upsilon},Y^{t,x,y,\upsilon})$ be given by \eqref{eq.state.strongformulation}. 
	We set ${\rm X} \coloneqq (X,Y)$.
	Thanks to \Cref{lemma.boundaries}, we have that $w^-(t,x)\leq y\leq w^+(t,x)$. 
	Moreover, the feasibility of $\upsilon$ gives that $w^-(s,X_s)\leq Y_s\leq w^+(s,X_s)$, $s\in [t,T]$, $\P\as$
	Otherwise, $w^-(s,X_s)> Y_s$ or $w^+(s,X_s)< Y_s$ for some $s\in [t,T]$ contradicts the feasibility of $\upsilon$ by definition of $w^-$ and $w^+$, see \eqref{eq:auxiliary-w1} and \eqref{eq:auxiliary-w2}. 
	Let 
	\begin{align*}
		\theta_1 &\coloneqq\inf \{ s> t :  Y_s=w^-(s,X_s), \text{ or, }Y_s= w^+(s,X_s)\},\\
		\theta_2 &\coloneqq\inf \{ s> \theta_1  :  w^-(s,X_s)<Y_s< w^+(s,X_s)\}\wedge T.
	\end{align*}
	and notice that $t\leq \theta_1 \leq T$ since by the feasibility of $\upsilon$, $w^-(T,X_T)=w^+(T,X_T)=g(X_T)=Y_T$.
	We now consider the process $v(t,{\rm X}_t)\coloneqq v(t,X_t,Y_t)$ and compute $v(t,{\rm X}_t)-v(\theta_2,{\rm X}_{\theta_\smallfont{2}})=v(t,{\rm X}_t)-v(\theta_1 ,{\rm X}_{\theta_\smallfont{1}})+v(\theta_1 ,{\rm X}_{\theta_\smallfont{1}})-v(\theta_2 ,{\rm X}_{\theta_\smallfont{2}})\eqqcolon I_1+I_2$.
	It follows from Itô's formula that
	\begin{align}\label{eq.verificationi1}
		I_1&= -\int_t^{\theta_\smallfont{1}} \Big( \partial_t  v (s,{\rm X}_s)\d s+\frac12{\rm Tr}[\partial_{\rm xx}^2v(s, {\rm X} _s)\d \langle {\rm X}  \rangle_s]\Big) -\int_t^{\theta_\smallfont{1}} \partial_{\rm x}   v(s,{\rm X}_s )\cdot  \d {\rm X}_s\nonumber\\
		&=  \int_t^{\theta_\smallfont{1}}   \Big(  {\rm H}^{\rm L} \big(s, {\rm X}_s    , \partial_{\rm x}  v(s, {\rm X}_s  ), \partial_{\rm xx}^2   v(s, {\rm X} _s ) \big) - {\rm h}^{\rm L} \big(s, {\rm X}_s  , \partial_{\rm x}  v(s, {\rm X}_s ), \partial_{\rm xx}^2   v(s, {\rm X}_s ) , \upsilon_s\big) \Big) \d s  \nonumber \\
		&\quad - \int_t^{\theta_\smallfont{1} } \big( \partial_{x}   v(s,{\rm X}_s ) ,  \partial_{y}   v(s,{\rm X}_s ) \big)^\top \cdot \big( \sigma(s,X_s,\upsilon_s) \d W_s,  Z_s\cdot \sigma(s,X_s,\upsilon_s) \d W_s\big)^\top\nonumber \\
		&\quad+ \int_t^{\theta_\smallfont{1}}    C(s,X_s,\upsilon_s)\d s\nonumber\\
		&\geq  \int_t^{\theta_\smallfont{1}} C(s,X_s,\upsilon_s) \d s-\int_t^{\theta_\smallfont{1} } \Big( \partial_x  v(s,{\rm X}_s) + \partial_y v(s,{\rm X}_s)Z_s  \Big)\cdot    \sigma(s,X_s,\upsilon_s) \d W_s,
	\end{align}
	where we used the fact, on $[t,{\theta_1 })$, $v$ satisfies the first equation in \eqref{eq.fullHJB}, computed the dynamics of ${\rm X}$ and added and subtracted $C$ to complete the term $h^{\rm L}$. The inequality follows from the definition of $H^{\rm L}$.
	\smallskip
	
	We now consider $I_2$. 
	Without loss of generality, we assume that $Y_{\theta_\smallfont{1}}=w^-(\theta_1,X_{\theta_\smallfont{1}})$, and note that $Y_s=w^-(s,X_s)$ for $s\in[\theta_1,\theta_2]$, $\P\as$
	By the uniqueness of their It\^o decomposition, we deduce that $Z_t=\partial_x w^-(t, X_t)$, and $\upsilon_t \in N(t,X_t, \partial_x w^-(t,X_t)), \d t\otimes \d\P\ae$ on $[\theta_1,\theta_2]\times \Omega$.
	With this, applying Itô's formula to $w^-(t,X_t)-Y_t$, $t\in [\theta_1,\theta_2]$, we find that
	\begin{align*}
		\int_{\theta_\smallfont{1}}^t \big( h^b(s,X_s,\partial_x  w^-(s, X_s),\partial^2_{xx}  w^-(s, X_s),\upsilon_s)-H^-(s,X_s,\partial_x  w^-(s, X_s),\partial^2_{xx}  w^-(s, X_s))\big)  \d s
	\end{align*}
	vanishes, which by the previous discussion implies that $\upsilon$ attains the infimum in \eqref{eq:h_b}; in particular, $\upsilon_t \in U^-(t,X_t),\d t\otimes \d \P\ae$
	Let $\bar v(t,x)\coloneqq v(t,x, w^-(t,x))$, so that
	\begin{align*}
		I_2 &=  -\int_{\theta_\smallfont{1}}^{\theta_\smallfont{2}} \bigg( \partial_t \bar v (s,X_s )\d s+\frac12{\rm Tr}[\partial_{xx}^2\bar v(s, X_s )\d \langle X\rangle_s]\bigg) -\int_{\theta_\smallfont{1}}^{\theta_\smallfont{2}} \partial_x \bar v(s,X_s )\cdot  \d X_s \\
		& =   -\int_{\theta_\smallfont{1}}^{\theta_\smallfont{2}}\Big( \partial_t v(s,X_s,w^-(s,X_s)) +  \sigma\lambda(s,X_s,\upsilon_s)\cdot \partial_x v(s,X_s,w^-(s,X_s))\Big)\d s\\
		&\quad -  \frac12 \int_{\theta_\smallfont{1}}^{\theta_\smallfont{2}} {\rm Tr}[\partial_{xx}^2\bar v(s, X_s)\d \langle X\rangle_s]   \\
		&\quad-\int_{\theta_\smallfont{1}}^{\theta_\smallfont{2}} \partial_y v(s,X_s,w^-(s,X_s))\big(  \partial_t w^-(s,X_s)   +\sigma\lambda(s,X_s,\upsilon_s) \cdot \partial_x w^- (s,X_s)   \big)\d s \\
		&\quad-\int_{\theta_\smallfont{1}}^{\theta_\smallfont{2}} \big( \partial_x  v(s,X_s,w^-(s,X_s)) + \partial_y v(s,X_s,w^-(s,X_s))  \partial_x w^-(s,X_s)\big)\cdot    \sigma(s,X_s,\upsilon_s) \d W_s \\
		& =  -\int_{\theta_\smallfont{1}}^{\theta_\smallfont{2}} \Big( \partial_t v(s,X_s,w^-(s,X_s)) +\sigma\lambda(s,X_s,\upsilon_s)\cdot \partial_x v(s,X_s,w^-(s,X_s))   \Big) \d s \\
		&\quad - \int_{\theta_\smallfont{1}}^{\theta_\smallfont{2}}  \partial_y v(s,X_s,w^-(s,X_s))  \Big(   \partial_t w^-(s,X_s)  - c(s,X_s,\upsilon_s) \Big) \d s \\
		&\quad- \frac12 \int_{\theta_\smallfont{1}}^{\theta_\smallfont{2}}  \bigg(  {\rm Tr}[\partial_{xx}^2\bar v(s, X_s)\d \langle X\rangle_s]-  \partial_y v(s,X_s,w^-(s,X_s)) {\rm Tr} [ \partial^2_{x x} w^-(s,X_s)\d \langle X\rangle_s ] \bigg)  \\
		&\quad-\int_{\theta_\smallfont{1}}^{\theta_\smallfont{2}} \partial_y v(s,X_s,w^-(s,X_s)) h^{\rm b} (s,X_s,   \partial_{x } w^-(s,X_s)  ,  \partial^2_{x x} w^-(s,X_s)  , \upsilon_s) \d s \\
		&\quad-\int_{\theta_\smallfont{1}}^{\theta_\smallfont{2}} \big( \partial_x  v(s,X_s,w^-(s,X_s)) + \partial_y v(s,X_s,w^-(s,X_s))  \partial_x w^-(s,X_s)\big)\cdot    \sigma(s,X_s,\upsilon_s) \d W_s,
	\end{align*}
	where in the first equality, we computed the time and space derivatives of $\bar v$, the dynamics of $X$, and in the second equality, we added and subtracted $\partial_y v   \big( c + \frac{1}2 {\rm Tr}[\sigma\sigma^\t  \partial^2_{xx}w^- ]\big)$ and use the fact that $Z_\cdot=\partial_x w^-(\cdot, X_\cdot)$ to complete the term $h^{\rm b}$ in the third line.
	\smallskip
	
	Recalling that $\upsilon$ attains the infimum in \eqref{eq:h_b}, the term $  \partial_t w^- +h^{\rm b}$ equals $0$.
	Moreover, since $Z_\cdot=\partial_x w^-(\cdot, X_\cdot)$, we have
	\begin{align*}
		{\rm Tr} \big[ \partial^2_{\rm x x} v(t,{\rm X}_t)\d \langle {\rm X}\rangle_t \big]={\rm Tr} \big[ \partial^2_{ x x} \bar  v(t,{X}_t)\d \langle {X}\rangle_t \big]-\partial_y  v(t,X_t,w^-(t,{ X}_t)) {\rm Tr}\big[ \partial^2_{ x x} w^-(t,{ X}_t)\d \langle {X}\rangle_t \big], \d t\otimes \d\P\ae
	\end{align*}
	Consequently,
	\begin{align}\label{eq.verificationi2}
		I_2&=- \int_{\theta_\smallfont{1}}^{\theta_\smallfont{2}}\Big( \partial_t v(s,{\rm X}_s)  + {\rm h}^{\rm L} \big(s, {\rm X}_s, \partial_{\rm x}  v(s, {\rm X}_s  ), \partial_{\rm xx}^2   v(s, {\rm X}_s) , \upsilon_s\big)  \Big) \d s \nonumber\\
		&\quad +\int_{\theta_\smallfont{1}}^{\theta_\smallfont{2}} C(s,X_s,\upsilon_s) \d s-\int_{\theta_1}^{\theta_2} \Big( \partial_x  v(s,{\rm X}_s) + \partial_y v(s,{\rm X}_s)  \partial_x w^-(s,X_s)\Big)\cdot    \sigma(s,X_s,\upsilon_s) \d W_s \nonumber  \\
		&= \int_{\theta_\smallfont{1}}^{\theta_\smallfont{2}}  \Big({\rm H}^{-,w^-}\! \big(s, {\rm X}_s    , \partial_{\rm x}  v(s, {\rm X}_s  ), \partial_{\rm xx}^2   v(s, {\rm X} _s ) \big)- {\rm h}^{\rm L} \big(s, {\rm X}_s , \partial_{\rm x}  v(s, {\rm X}_s), \partial_{\rm xx}^2   v(s, {\rm X}_s) , \upsilon_s\big) \Big)\d s \nonumber  \\
		&\quad +\int_{\theta_\smallfont{1}}^{\theta_\smallfont{2}} C(s,X_s,\upsilon_s) \d s-\int_{\theta_1}^{\theta_2} \Big( \partial_x  v(s,{\rm X}_s) + \partial_y v(s,{\rm X}_s)  \partial_x w^-(s,X_s)\Big)\cdot    \sigma(s,X_s,\upsilon_s) \d W_s\nonumber  \\
		& \geq \int_{\theta_\smallfont{1}}^{\theta_\smallfont{2}} C(s,X_s,\upsilon_s) \d s-\int_{\theta_1}^{\theta_2} \Big( \partial_x  v(s,{\rm X}_s) + \partial_y v(s,{\rm X}_s)  \partial_x w^-(s,X_s)\Big)\cdot    \sigma(s,X_s,\upsilon_s) \d W_s,
	\end{align}
	where in the first equality we added and subtracted $C$ to complete the expression for $h^{\rm L}$, and in the second equality, we used the fact that $v$ satisfies the second equation in \eqref{eq.fullHJB} for ${\rm X}_\cdot=(X_\cdot,w^-(\cdot,X_\cdot))$. The inequality follows from the definition of $H^{-,w^-}$ and the fact that $\upsilon_\cdot \in N(\cdot,X_\cdot, \partial_x w^-(\cdot,X_\cdot)),\d t\otimes \d \P\ae$
	\smallskip	
	
	We now notice that for arbitrary feasible $\upsilon$, in general, there exists a sequence $(\theta_n)_{n\in\N^\smallfont{\star}}\subseteq \Tc_T$, $t\leq \theta_n\leq \theta_{n+1}$, $n\geq1$, $\theta_n\longrightarrow T$, $\P\as$ as $n$ goes to infinity, of the form discussed above.
	Without loss of generality, we can assume $(X,Y)$ is bounded on $[t,\theta_n]$, so that by continuity, the terms $v, w^+$, and their derivates are bounded on $[t,\theta_n]$.
	Thus, since $\sigma$ is bounded and $\|Z\|_{\H^{\raisebox{1pt}{\smallfont{p}}}(\F,\P)}^p < \infty$, the stochastic integrals in \eqref{eq.verificationi1} and \eqref{eq.verificationi2} are martingales. 
	It then follows from \eqref{eq.verificationi1} and \eqref{eq.verificationi2}
	\begin{align*}
		v(t,x,y)\geq \E^\P\bigg[ v(\theta_n,X_{\theta_n},Y_{\theta_n}) + \int_t^{\theta_n} C(s,X_s , \upsilon_s )\d s\bigg| \Fc_t \bigg].
	\end{align*}
	
	Thus, the uniform integrability of the family $\{ v(\theta_n,X_{\theta_n},Y_{\theta_n})\}_{n\in\N^\smallfont{\star}}$, the boundedness of $C$, together with an application of dominated convergence, gives
	\begin{align}\label{eq.ineq.veri}
		v(t,x,y)\geq \E^\P\bigg[ G(X_{T}) + \int_t^{T} C(s,X_s , \upsilon_s )\d s\bigg| \Fc_t \bigg],
	\end{align}
	where we used the boundary condition in time in \eqref{eq.fullHJB} and that $w^-(T^-,x)=g(x)$, see \eqref{eq:pde-sup}. The arbitrariness of $\upsilon$ gives $v(t,x,y)\geq V(t,x,y)$. To conclude, note that for $(Z^\star,\Gamma^\star,\alpha^\star,b^\star)$ as in the statement, the inequalities in \eqref{eq.verificationi1} and \eqref{eq.verificationi2} are tight. 
\end{proof}

\begin{funding}
	The second author is supported by ANID FONDECYT Iniciaci\'on 11240944.	The third author is supported in part by NSF grant DMS-2307736. The fourth author is supported in part by SNSF MINT project 205121\_219818 and a 2022 Consolidation Grant from the Leading House for the Latin American Region.
\end{funding}

\bibliographystyle{imsart-nameyear} 
\bibliography{biblioAAP2254}       

\begin{appendix}
\section{ACLM solution: additional results and proofs}\label{app:example}

\begin{lemma} \label{lemma-ACLM}
	For $k>0$, consider the closed-loop memoryless strategy $a_k \in \Ac$ defined for all $t \in [0,T]$ by
	\begin{align*}
		a_k(t,X_t) \coloneqq   \frac{1}{c_{\rm L}} + k (X_t - X_t^\star), \; \text{\rm where} \;
		X_t^\star = x_0 + \dfrac{t}{c_{\rm L}} +  \int_0^t   \Pi_{B} \bigg( \frac{\mathrm{e}^{k(T-s)}}{c_{\rm F}} \bigg)  \d s + \sigma W_t.
	\end{align*}
	{Assume that $a_\circ \geq \frac{1}{c_{\rm L}} + \frac{(b_\circ c_{\rm F} -1)^2}{2c_{\rm F}}$ and define $\bar{K}\coloneqq \frac{1}{T} \log\big( \frac{1}{b_\circ} \big( a_\circ + \frac{1}{c_{\rm L}} +\frac{1}{2c_{\rm F}} \big) + \frac{b_\circ c_{\rm F}}2  \big) $}. Then, for a fixed  $k \in (0, \bar{K}]$ and the associated strategy $a_k$, we have that {$a_k$ is the solution to the {\rm ACLM-$k$} problem} and the leader obtains the following reward, which is higher than his value in the {\rm AOL} information case
	\[
	f(k) \coloneq  x_0 + \dfrac{T}{2 c_{\rm L}} +  b_\circ  t_\circ^k + \dfrac{1}{k c_{\rm F}} ( \mathrm{e}^{k(T-t_\circ^k)} -1),
	\]
	where $t_\circ^k \coloneqq( T - \frac{1}{k} \log(b_\circ  c_{\rm F}) )^+$.
\end{lemma}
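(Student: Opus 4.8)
The plan is to verify the SMP-driven candidate in three main steps and then record where the real difficulty lies.

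\textbf{Step 1 (equilibrium path and admissibility).} First I would check that along the equilibrium nothing moves: under the pair $(a_k,\beta_k^\star)$ with $\beta_k^\star(t)\coloneqq\Pi_B(\mathrm e^{k(T-t)}/c_{\rm F})$, setting $\Delta_t\coloneqq X_t-X^\star_t$ and subtracting the dynamics of $X^\star$ from those of the controlled state $X$ yields the linear ODE $\d\Delta_t=k\Delta_t\,\d t$ with $\Delta_0=0$, hence $\Delta\equiv 0$ and $X\equiv X^\star$. Therefore $a_k(t,X_t)=c_{\rm L}^{-1}\in A$ (using $a_\circ>c_{\rm L}^{-1}$), $\beta_k^\star(t)\in B$ by construction, and the slope of $a_k$ in $x$ equals $k$, so $a_k\in\Ac^2_k$.

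\textbf{Step 2 (the follower's best response — where the hypotheses enter).} In this step, for an \emph{arbitrary} response $\beta\in\Bc$, one notes that as long as $c_{\rm L}^{-1}+k\Delta_t\in A$ the difference $\Delta_t=X_t-X^\star_t$ solves $\d\Delta_t=(k\Delta_t+\beta_t-\beta_k^\star(t))\,\d t$, so $\Delta_t=\int_0^t\mathrm e^{k(t-s)}(\beta_s-\beta_k^\star(s))\,\d s$, and by ODE comparison $\Delta^-_t\le\Delta_t\le\Delta^+_t$, where $\Delta^\pm$ are the trajectories associated with the extreme responses $\beta\equiv 0$ and $\beta\equiv b_\circ$. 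Both $\Delta^\pm$ are monotone in $t$, and an explicit computation (splitting the integrals at $t_\circ^k$) gives $k\sup_{t}\Delta^+_t=\tfrac{(b_\circ c_{\rm F}-1)^2}{2c_{\rm F}}$ and $k\sup_{t}|\Delta^-_t|=b_\circ\mathrm e^{kT}-\tfrac{b_\circ^2 c_{\rm F}}2-\tfrac1{2c_{\rm F}}$ (this last expression in the regime $t_\circ^k>0$, and a smaller one otherwise). A standard continuation argument then shows that the assumption $a_\circ\ge c_{\rm L}^{-1}+\tfrac{(b_\circ c_{\rm F}-1)^2}{2c_{\rm F}}$ keeps $c_{\rm L}^{-1}+k\Delta_t\le a_\circ$, and the bound $k\le\bar K$ keeps $c_{\rm L}^{-1}+k\Delta_t\ge-a_\circ$, for every $t\in[0,T]$ and every $\beta\in\Bc$. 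Hence $a_k$ is $A$-valued along every trajectory the follower can generate, \emph{i.e.} a genuine admissible strategy, and on this range the follower's criterion becomes, using $X_T=X^\star_T+\Delta_T$,
\[
J_{\rm F}(a_k,\beta)=\E^\P\big[X^\star_T\big]-\int_0^T\mathrm e^{k(T-s)}\beta_k^\star(s)\,\d s+\E^\P\bigg[\int_0^T\Big(\mathrm e^{k(T-s)}\beta_s-\tfrac{c_{\rm F}}2\beta_s^2\Big)\d s\bigg],
\]
whose integrand is maximised pointwise, and uniquely, at $\beta_s=\Pi_B(\mathrm e^{k(T-s)}/c_{\rm F})=\beta_k^\star(s)$ by strict concavity of $b\longmapsto\mathrm e^{k(T-s)}b-\tfrac{c_{\rm F}}2 b^2$ on $B$. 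So $\beta_k^\star$ is the unique best response to $a_k$, matching the Hamiltonian characterisation obtained before the statement.

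\textbf{Step 3 (optimality of $a_k$, its value, and the comparison).} For Step 3 I would invoke \cite[Theorem 4.1]{bensoussan2015maximum}, as recalled in the text, so that the leader's ACLM-$k$ problem equals $\widetilde V_{\rm L}^k$, in which the follower's response is already substituted. Taking expectations in the state equation and integrating out $W$, its objective rewrites as $x_0+\E^\P\big[\int_0^T(\gamma_t-\tfrac{c_{\rm L}}2\gamma_t^2)\,\d t\big]+\E^\P\big[\int_0^T\Pi_B(Y^{\rm F}_t/c_{\rm F})\,\d t\big]$, with $\gamma_t\coloneqq a^2_t X_t+a^1_t\in A$ the leader's effort and $Y^{\rm F}_t=\E^\P[\mathrm e^{\int_t^T a^2_s\d s}\,|\,\Fc_t]$. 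These two terms decouple (any pair $(\gamma,a^2)$ of admissible processes being realised by $a^1_t=\gamma_t-a^2_t X_t$): the first is maximised pointwise by $\gamma_t\equiv c_{\rm L}^{-1}$, with value $T/(2c_{\rm L})$; and since $a^2_s\le k$ forces $Y^{\rm F}_t\le\mathrm e^{k(T-t)}$ while $\Pi_B$ is non-decreasing, the second is maximised by $a^2\equiv k$, with value $\int_0^T\Pi_B(\mathrm e^{k(T-s)}/c_{\rm F})\,\d s$. Both optima are attained by $a_k$, so $a_k$ solves the ACLM-$k$ problem and its value is $x_0+T/(2c_{\rm L})+\int_0^T\Pi_B(\mathrm e^{k(T-s)}/c_{\rm F})\,\d s$. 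Since $\mathrm e^{k(T-s)}/c_{\rm F}\ge b_\circ$ exactly when $s\le t_\circ^k$, the integral equals $b_\circ t_\circ^k+c_{\rm F}^{-1}\int_{t_\circ^k}^T\mathrm e^{k(T-s)}\,\d s=b_\circ t_\circ^k+(kc_{\rm F})^{-1}(\mathrm e^{k(T-t_\circ^k)}-1)$, giving $f(k)$. Finally, $1/c_{\rm F}\in B$ and $\mathrm e^{k(T-s)}/c_{\rm F}\ge1/c_{\rm F}$ with strict inequality for $s<T$, so $\Pi_B(\mathrm e^{k(T-s)}/c_{\rm F})\ge 1/c_{\rm F}$ pointwise with strict inequality a.e.; hence $\int_0^T\Pi_B(\mathrm e^{k(T-s)}/c_{\rm F})\,\d s>T/c_{\rm F}$, which gives $f(k)>x_0+(\tfrac1{2c_{\rm L}}+\tfrac1{c_{\rm F}})T$, the leader's AOL value.

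\textbf{Main obstacle.} I expect Step 2 to be the crux: turning the SMP-driven candidate into an actual equilibrium requires showing that the affine feedback $a_k$ is genuinely admissible and that $\beta_k^\star$ is the follower's best response, which in turn forces one to control the excursions of $X_t-X^\star_t$ under \emph{all} follower deviations. This is exactly where both hypotheses are used, $\tfrac{(b_\circ c_{\rm F}-1)^2}{2c_{\rm F}}$ being the worst upward excursion $k\Delta_t$ (follower playing $b_\circ$ rather than $\beta_k^\star$) and $\bar K$ the largest slope for which the worst downward excursion (follower playing $0$) still keeps $a_k$ inside $A$.
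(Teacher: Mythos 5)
Your proposal is correct and follows essentially the same route as the paper: the reduction to the linear ODE for $X-X^\star$, the pointwise optimisation of the follower's criterion, the admissibility bounds via the extreme responses $\beta\equiv 0$ and $\beta\equiv b_\circ$ (yielding exactly the two hypotheses on $a_\circ$ and $\bar K$), and the upper bound on $\widetilde V_{\rm L}^k$ through $Y^{\rm F}_t\leq \mathrm{e}^{k(T-t)}$ all match the paper's argument. The only (cosmetic) difference is the final comparison with the AOL value, which you obtain by the pointwise inequality $\Pi_B(\mathrm{e}^{k(T-s)}/c_{\rm F})\geq 1/c_{\rm F}$ rather than by the paper's monotonicity-in-$k$ and limit argument.
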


\begin{proof}[Proof of Lemma \ref{lemma-ACLM}]
	
	(i) To provide the main intuition, suppose first that the leader's actions are unrestricted, that is $A=\R$. This is the usual setting for the ACLM problems that are solved explicitly in the literature. Then, the leader announces her strategy $\alpha_k \in \Ac$ defined by
	\[
	a_k(t,X_t) =  \frac{1}{c_{\rm L}} + k (X_t - X_t^\star), \; t \in [0,T].
	\]
	
	Then, the follower's optimisation problem originally defined in \eqref{eq:pb-follower-example} is the following
	\begin{align}\label{eq:lemma-follower-aclm}
		V_{\rm F} (\alpha_k) &\coloneqq  \sup_{\beta \in \Bc}  \E^\P \bigg[ X_T - \dfrac{c_{\rm F}}2  \int_0^T \beta_t^2 \d t \bigg], \\
		\text{subject to} \; 
		\d X_t &= \bigg( \frac{1}{c_{\rm L}} + k(X_t - X_t^\star) +  \beta_t \bigg) \d t + \sigma \d W_t, \; t \in [0,T]. \nonumber
	\end{align}
	As described in \Cref{sss:ACLM}, one can use the stochastic maximum principle to obtain, after solving the appropriate FBSDE system, that the optimal response of the follower is given by
	\begin{equation}\label{eq:lemma-optimal-beta-aclm}
		\beta^\star_t = \Pi_{[0,b_\circ]} \bigg( \frac{\erm^{k(T-t)} }{c_{\rm F}} \bigg), \; t\in[0,T].
	\end{equation}
	Alternatively, one can solve this stochastic control problem in a more straightforward way, by noticing that the follower's problem defined above by \eqref{eq:lemma-follower-aclm} can be rewritten as follows
	\begin{align*}
		V_{\rm F} (\alpha_k) & = \sup_{\beta \in \Bc}  \bigg\{\E^\P \bigg[ X^\star_T +  \widetilde X_T  - \dfrac{c_{\rm F}}2  \int_0^T \beta_t^2 \d t \bigg] \bigg\}\\
		& = x_0 + \dfrac{T}{c_{\rm L}} + b_\circ  t_\circ^k + \dfrac{1}{k c_{\rm F}} ( \mathrm{e}^{k(T-t_\circ^k)} -1) + \sup_{\beta \in \Bc} \bigg\{ \E^\P \bigg[ \sigma W_T + \widetilde X_T  - \dfrac{c_{\rm F}}2  \int_0^T \beta_t^2 \d t \bigg]\bigg\},
	\end{align*}
	where the process $\widetilde X \coloneqq  X-X^\star$, corresponding to the only state variable of the previous control problem, satisfies the following controlled ODE
	\begin{align}\label{eq:lemma-dynamics-XandXstar}
		\drm \widetilde X_t = 
		\bigg( k \widetilde X_t +  \beta_t - \Pi_{[0,b_\circ]} \bigg( \frac{\erm^{k(T-t)} }{c_{\rm F}} \bigg)
		\bigg) \drm t, \; t \in [0,T], \; \widetilde X_0 =0,
	\end{align}
	whose solution is given by
	\begin{equation} \label{eq:explicit-x-tilde}
		\widetilde X_t\coloneqq  \erm^{kt} \int_0^t \erm^{-ks} \bigg( \beta_s -  \Pi_{[0,b_\circ]} \bigg( \frac{\erm^{k(T-s)} }{c_{\rm F}} \bigg) \bigg) \d s
		= \erm^{kt} \int_0^t \erm^{-ks} \beta_s \d s
		- L_t^k, \; \forall t\in[0,T],
	\end{equation}
	with the process
	\begin{align*}
		L_t^k &\coloneqq  \int_0^t \erm^{k(t-s)} \Pi_{[0,b_\circ]} \bigg( \frac{\erm^{k(T-s)} }{c_{\rm F}} \bigg) \d s \\
		&= \begin{cases} 
			\displaystyle \frac{b_\circ }{k} (\mathrm{e}^{kt} -1 ), \; t \in[0,t_\circ^k]\\[.3cm]
			\displaystyle \frac{b_\circ  \mathrm{e}^{kt}}{k} (1 - \mathrm{e}^{-kt_\circ^k}) + \frac{1}{2k c_{\rm F}}(\mathrm{e}^{k(T-t_\circ^k)} \mathrm{e}^{k(t-t_\circ^k)} - \mathrm{e}^{k(T-t)} ) , \; t\in[t_\circ^k,T].
		\end{cases}
	\end{align*}
	The follower's optimisation problem thus becomes
	\begin{align*}
		V_{\rm F} (\alpha_k)
		&= x_0 + \dfrac{T}{c_{\rm L}} + b_\circ  t_\circ^k + \dfrac{\mathrm{e}^{k(T-t_\circ^k)} -1}{k c_{\rm F}}  + \sup_{\beta \in \Bc} \bigg\{ \erm^{kT} \int_0^T \erm^{-kt} \beta_t \d t - L_T^k
		- \dfrac{c_{\rm F}}2  \int_0^T \beta_t^2 \d t \bigg\}\\
		&= x_0 + \dfrac{T}{c_{\rm L}} +  b_\circ  t_\circ^k + \dfrac{ \mathrm{e}^{k(T-t_\circ^k)} -1}{k c_{\rm F}}  - L_T^k
		+ \sup_{\beta \in \Bc}\bigg\{ \int_0^T \bigg( \erm^{k(T-t)} \beta_t - \dfrac{c_{\rm F}}2 \beta_t^2 \bigg)\d t\bigg\}.
	\end{align*}
	The optimal effort $\beta^\star$ introduced above in \eqref{eq:lemma-optimal-beta-aclm} is deduced by pointwise optimisation. The value of the follower is then
		\begin{align*}
			V_{\rm F} (\alpha_k) &= x_0 + \dfrac{T}{c_{\rm L}} +  b_\circ  t_\circ^k + \dfrac{1}{k c_{\rm F}} ( \mathrm{e}^{k(T-t_\circ^k)} -1)  
			- \dfrac{c_{\rm F}}2    \int_0^T \Pi_{[0,b_\circ]} \bigg( \frac{\erm^{k(T-t)} }{c_{\rm F}} \bigg)^2\d t \\
			& = x_0 + \dfrac{T}{c_{\rm L}} +  b_\circ  t_\circ^k + \dfrac{1}{k c_{\rm F}} ( \mathrm{e}^{k(T-t_\circ^k)} -1)   - \dfrac{c_{\rm F}}2 b_\circ^2 t_\circ^k - \frac{1}{4kc_{\rm F}} (\mathrm{e}^{2k(T-t_\circ^k)} - 1). 
		\end{align*}
	
	\smallskip
	
	Remark that for the optimal control $\beta^\star$, the controlled ODE \eqref{eq:lemma-dynamics-XandXstar} simplifies, and gives the trivial solution $\widetilde X_t = 0$, \textit{i.e.} $X_t = X_t^\star$, for all $t \in [0,T]$.
	In other words, the best choice for the follower is to choose $\beta$ so that the process $X$ coincides with the process $X^\star$. Given the follower's optimal response, the objective value of the leader for the strategy $\alpha_k$ simplifies to
	\begin{align*}
		\E^\P \bigg[ X_T - \dfrac{c_{\rm L}}2  \int_0^T \big( a_k(t,X_t) \big)^2 \d t \bigg]
		&= \E^\P \bigg[X_T^\star - \dfrac{c_{\rm L}}2  \int_0^T \bigg( \frac{1}{c_{\rm L}}  \bigg)^2 \d t \bigg] \\
		&= x_0 + \dfrac{T}{2c_{\rm L}} +  b_\circ  t_\circ^k + \dfrac{1}{k c_{\rm F}} ( \mathrm{e}^{k(T-t_\circ^k)} -1) = f(k).
	\end{align*}

	\smallskip
	$(ii)$  We now show that $a_k(t,X_t)$ provides an admissible strategy for problem ACLM-$k$, for $k\in (0,\bar{K}]$. {Notice that we can write $a_k(t,X_t)=\hat a^1_t + \hat a^2_t X_t$, with $\hat a^2 \equiv k$ and $\hat a^1 = \frac{1}{c_{\rm L}} - k X^\star$. We then have to show that $(\hat a^1,\hat a^2)\in\Ac_k^2$, which means that $a_k(t,X_t)$ takes values in $A=[-a_\circ,a_\circ]$}. From \eqref{eq:explicit-x-tilde}, for any strategy $\beta\in\Bc$ of the follower we have that
	\[
	-k L_T^k \leq -k L_t^k \leq  k \tilde X_t \leq \int_0^T \mathrm{e}^{k(t-s)}  \bigg( b_\circ  -  \Pi_{[0,b_\circ]} \bigg( \frac{\erm^{k(T-s)} }{c_{\rm F}} \bigg) \bigg)  \leq   b_\circ (\mathrm{e}^{kT} - 1) - k L_T^k.
	\]
	By replacing the value of $L_T^k$, we obtain that the term in the right side is increasing in $k$ and 
	\[
	\frac{1}{c_{\rm L}} + k \tilde X_t \leq a_\circ , \; \forall t\in[0,T] \iff \frac{1}{c_{\rm L}}  -b_\circ  + b_\circ^2 c_{\rm F}  -\frac{1}{2c_{\rm F}}(b_\circ^2 c_{\rm F}^2 - 1) \leq a_\circ.
	\]
	This condition is equivalent to the assumption in this lemma. Similarly, the term in the left side is decreasing in $k$ and we have
	\[
	\frac{1}{c_{\rm L}} + k \tilde X_t \geq - a_\circ ,\; \forall t\in[0,T] \iff  - \frac{1}{c_{\rm L}}  + b_\circ  \mathrm{e}^{kT} - b_\circ^2 c_{\rm F} +\frac{1}{2c_{\rm F}}(b_\circ^2 c_{\rm F}^2 - 1) \leq a_\circ,
	\]
	which holds because $k \leq \bar{K}$. We conclude that $(\hat a^1,\hat a^2) \in\Ac^2_k$.

	\smallskip
	$(iii)$ We now show that the value of the ACLM-$k$ is equal to $f(k)$ and therefore $(\hat a^1,\hat a^2) \in\Ac^2_k$ is optimal. In \eqref{eq:ACLM-states-before-linearizing}, the solution to the linear BSDE is given by $Z^{\rm F} = 0$ and
	\[
	Y_t^{\rm F} = \E^\P \Big[ \mathrm{e}^{\int_t^T a_s^2 \d s}   \Big|  \Fc_t^W \Big] \leq \mathrm{e}^{k(T-t)}.
	\]
	Then, we can replace the dynamics of $X_t$ in the objective function and rewrite $\widetilde V_{\rm L}^k$ in order to find an upper bound
	\begin{align*}
		\widetilde V_{\rm L}^k & = x_0 + \sup_{( a^{\smallfont1}, a^{\smallfont2})\in\Ac^2_k}\bigg\{ \E^\P \bigg[ \int_0^T \bigg( a^{2}_t X_t +a^{1}_t + \Pi_B \bigg( \dfrac{Y_t^{\rm F}}{c_{\rm F}} \bigg)  - \dfrac{c_{\rm L}}2   \big( a^{2}_t X_t + a^{1}_t \big)^2 \bigg) \d t \bigg]\bigg\} \\
		& \leq x_0 + \sup_{( a^{\smallfont1}, a^{\smallfont2})\in\Ac^2_k} \bigg\{\E^\P \bigg[ \int_0^T \bigg( a^{2}_t X_t +a^{1}_t + \Pi_B \bigg( \dfrac{  \mathrm{e}^{k(T-t)} }{c_{\rm F}} \bigg)  - \dfrac{c_{\rm L}}2   \big( a^{2}_t X_t + a^{1}_t \big)^2 \bigg) \d t \bigg] \bigg\}\\
		& = x_0 + \int_0^T \bigg( \frac{1}{2 c_{\rm L}} + \Pi_B \bigg( \dfrac{  \mathrm{e}^{k(T-t)} }{c_{\rm F}} \bigg)  \bigg) \d t  =  x_0 + \frac{T}{2 c_{\rm L}} + b_\circ t_\circ^k  + \frac{1}{k c_{\rm F} } (\mathrm{e}^{k(T-t_\circ^k)}-1) = f(k).
	\end{align*}
	Since $(\hat a^1,\hat a^2)$ attains the upper bound, it is optimal.
	
	\smallskip
	$(iv)$ Finally, notice that $f$ is an increasing function of $k$, and that its limit when $k$ goes to $0$ is given by
	\begin{align*}
		\lim_{k \rightarrow 0} f(k) = x_0 + \dfrac{T}{2c_{\rm L}} + \lim_{k \rightarrow 0} \dfrac{\erm^{kT} -1}{k c_{\rm F}}
		= x_0 + \bigg( \dfrac{1}{2c_{\rm L}} + \dfrac{1}{c_{\rm F}} \bigg) T.
	\end{align*}
	As this value corresponds to the leader's value function in the AOL cases, we \textcolor{black}{conclude that the value of the ACLM-$k$ problem is higher than the AOL for $k>0$}. Similarly, we have
	\begin{align*}
		\lim_{k \rightarrow 0} V_{\rm F} (\alpha_k)
		&= x_0 + \bigg( \dfrac{1}{c_{\rm L}} + \dfrac{1}{2c_{\rm F}} \bigg) T. 
	\end{align*}

	\qedhere
\end{proof}

\section{Functional spaces} \label{sec:funcspace}

We introduce the spaces used in this paper, by following \cite{possamai2018stochastic}. Let $(t,x)\in [0,T]\times \Omega$, $(\mathcal P(t,x))_{t\in [0,T]\times x\in \Omega}$ be a family of sets of probability measures on $(\Omega,\mathcal F_T)$. In this section, we denote by $\mathbb X\coloneqq  (\mathcal X_s)_{s\in [0,T]}$ a general filtration on $(\Omega,\mathcal F_T)$. Let $p\geq 1$, $\mathbb P\in \mathcal P(t,x)$ and $\mathbb X_\mathbb P$ the usual $\mathbb P$-augmented filtration associated with $\mathbb X$.
\begin{itemize}
	\item $\mathbb H^p_{t,x}(\mathbb X,\mathbb P)$ (resp. $\mathbb H^p_{t,x}(\mathbb X,\mathcal P)$) denotes the spaces of $\mathbb X$-predictable $\mathbb R^d$-valued processes $Z$ such that
	\[
	\| Z\|_{\mathbb H^\smallfont{p}_{\smallfont{t}\smallfont{,}\smallfont{x}}(\mathbb X, \mathbb P)}^p\coloneqq \mathbb E^\mathbb P\bigg[\bigg( \int_t^T \| \widehat{\sigma}_s^\t Z_s\|^2 \mathrm{d}s\bigg)^\frac p2 \bigg] <+\infty,
	\;\; \bigg(\text{resp. }
	\| Z\|_{\mathbb H^\smallfont{p}_{\smallfont{t}\smallfont{,}\smallfont{x}} (\mathbb X,\mathcal P)   }^p\coloneqq \sup_{\mathbb P\in \mathcal P(t,x)} \| Z\|_{\mathbb H^\smallfont{p}_{\smallfont{t}\smallfont{,}\smallfont{x}}(\mathbb X,\mathbb P)}^p <+\infty 
	\bigg). 
	\]
	\item $\mathbb S^p_{t,x}(\mathbb X,\mathbb P)$ (resp. $\mathbb S^p_{t,x}(\mathbb X,\mathcal P)$) denotes the spaces of $\mathbb X$--progressively measurable $\mathbb R$-valued processes $Y$ such that
	\[
	\| Y\|_{\mathbb S^\smallfont{p}_{\smallfont{t}\smallfont{,}\smallfont{x}}(\mathbb X, \mathbb P)}^p\coloneqq \mathbb E^\mathbb P\bigg[\sup_{s\in [t,T]} |Y_s|^p \bigg] <+\infty, 
	\;\; \bigg(\text{resp. }
	\| Y\|_{\mathbb S^\smallfont{p}_{\smallfont{t}\smallfont{,}\smallfont{x}} (\mathbb X,\mathcal P)}^p\coloneqq \sup_{\mathbb P\in \mathcal P(t,x)} \| Y\|_{\mathbb S^p_{t,x}(\mathbb X,\mathbb P)}^p <+\infty
	\bigg). 
	\]
	\item $\mathbb I^p_{t,x}(\mathbb X,\mathbb P)$ (resp. $\mathbb I^p_{t,x}(\mathbb X,\mathcal P)$) denotes the spaces of $\mathbb X$-optional $\mathbb R$-valued processes $K$ with $\P\as$ c\`adl\`ag and non-decreasing paths on $[t,T]$ with $K_t=0,\; \mathbb P\text{\rm--a.s.}$ and
	\[
	\| K\|_{\mathbb I^\smallfont{p}_{\smallfont{t}\smallfont{,}\smallfont{x}}(\mathbb X, \mathbb P)}^p\coloneqq \mathbb E^\mathbb P[K_T^p ] <+\infty,
	\;\; \bigg(\text{resp. }
	\| K\|_{\mathbb I^\smallfont{p}_{\smallfont{t}\smallfont{,}\smallfont{x}}(\mathbb X, \mathcal P)}^p\coloneqq  \sup_{\mathbb P\in \mathcal P(t,x)} \|K \|^p_{\mathbb I^\smallfont{p}_{\smallfont{t}\smallfont{,}\smallfont{x}}(\mathbb X, \mathbb P)}<+\infty 
	\bigg). 
	\]
	
	\item $\mathbb G^p_{t,x}(\mathbb X,\mathbb P)$ denotes the spaces of $\mathbb X$-predictable $\S^\xdim$-valued processes $\Gamma$ such that
	\[
	\| \Gamma \|_{\mathbb G^\smallfont{p}_{\smallfont{t}\smallfont{,}\smallfont{x}}(\mathbb X, \mathbb P)}^p \coloneqq \E^\P\bigg[ \bigg(\int_t^T \big \| \sigmah^2_s\Gamma_s \big\|^{2}\d s\bigg)^{\frac p2}\bigg]<+\infty. 
	\]

\end{itemize}

When $t=0$, we simplify the previous notations by omitting the dependence on both $t$ and $x$.

\section{Boundaries PDEs: comparison and verification}\label{sec.appen.comp.boundaries}

We conduct the analysis for $w^-$, the argument for $w^+$ being analogous. 
We start by establishing a comparison result for viscosity solutions to \eqref{eq:pde-inf}. Let us recall that $w^-$ is a discontinuous viscosity solution of such an equation. Moreover, we remind the reader that the \Cref{assumption.strongtarget} is in place.

\begin{lemma}\label{lemma-comparison}
	Let $u$ and $v$ be respectively an upper--semi-continuous viscosity sub-solution and a lower--semi-continuous viscosity super-solution of \eqref{eq:pde-inf}, such that for $\varphi\in \{u, v\}$ and some $C>0$, $\varphi (y)\leq C(1+ \|y\|)$, $y\in [0,T]\times\R^d$. 
	If, $u(T,x)\leq v(T,x)$, $x \in \R^\xdim$, then $u\leq v$ on $\Oc\coloneqq (0,T)\times\R^d$.
	\begin{proof}
		{\bf Step 1.} Fix postive constants $\alpha$, $\beta$, $\eta$, and $\eps$, and define $\phi (t,x,y)\coloneqq u^\eta(t,x)-v(t,y)$, where $u^\eta(t,x)\coloneqq u(t,x)-\frac\eta{t}, \; (t,x)\in \Oc$. Note that since $\frac{\partial}{\partial t}  ( -\eta t^{-1} )=\eta t^{-2}>0$, $u^\eta$ is a viscosity sub-solution of \eqref{eq:pde-inf} in $\Oc$. Define
		\[
		\psi_{\alpha,\beta,\eps}(t,x,y)\coloneqq \alpha |x-y|^2/2+\eps|x|^2+\eps|y|^2-\beta(t-T).
		\]
		Let $M_{\alpha,\beta,\eps}\coloneqq \sup_{(t,x,y)\in (0,T]\times \R^\smallfont{\xdim}\times \R^\smallfont{\xdim}} \big\{\phi-\psi_{\alpha,\beta,\eps}\big\}(t,x,y)=(\phi- \psi_{\alpha,\beta,\eps})(t_{\alpha,\beta,\eps},x_{\alpha,\beta,\eps},y_{\alpha,\beta,\eps}),$ for $(t_{\alpha,\beta,\eps},x_{\alpha,\beta,\eps},y_{\alpha,\beta,\eps})\in (0,T]\times \R^\xdim\times \R^\xdim$ thanks to the upper--semi-continuity of $u^\eta-v$, the growth assumptions on $u$ and $v$ and that of $\beta(t-T)-\eta t^{-1}$. 
		Moreover, we have that $-\infty<\lim_{\alpha\rightarrow \infty} M_{\alpha,\beta,\eps} <\infty$, meaning that the supremum is attained on a compact subset of $(0,T]\times \R^\xdim\times \R^\xdim$. 
		Consequently, there is a subsequence $(t_{n}^{\beta,\eps},x_{n}^{\beta,\eps},y_{n}^{\beta,\eps})\coloneqq (t_{\alpha_\smallfont{n},\beta,\eps},x_{\alpha_\smallfont{n},\beta,\eps},y_{\alpha_\smallfont{n},\beta,\eps})$ that converges to some $(\hat t^{\beta,\eps},\hat x^{\beta,\eps},\hat y^{\beta,\eps})$. 
		It then follows from \cite[Proposition 3.7]{crandall1992user} that 
		\begin{align}\label{eq.ci}
			\begin{split}
			\hat x^{\beta,\eps} &=\hat y^{\beta,\eps}, \; \lim_{n\to\infty} \alpha_n|x^{\beta,\eps}_n-y^{\beta,\eps}_n|^2=0, \\ M_{\beta,\eps} &\coloneqq \lim_{n\longrightarrow\infty} M_{\alpha_\smallfont{n},\beta,\eps}=\sup_{(t,x) \in\overline \Oc} (u^\eta-v)(t,x)-2\eps |\hat x^\eps|+\beta (\hat t^{\beta,\eps}-T).
			\end{split}
		\end{align}
		{\bf Step 2.} To prove the statement, as it is standard in the literature, let us assume by contradiction that there is $(t_o,x_o)\in \Oc$ such that $\gamma_o\coloneqq (u-v)(t_o,x_o)>0$. 
		We claim that there are positive $\beta_o$, $\eta_o$, and $\eps_o$ such that for any $\beta_o\geq \beta>0$, $\eta_o\geq \eta>0$, $\eps_o\geq \eps>0$, $(t_{n}^{\beta,\eps},x_{n}^{\beta,\eps},y_{n}^{\beta,\eps})$ is a local maximiser of $\phi(t,x,y) - \psi_{\alpha_\smallfont{n},\beta,\eps}(t,x,y)$ on $(0,T)\times\Kc^2$ for some $\Kc\subseteq\R^\xdim$ compact. 
		We first note that the existence of $\Kc$ 
		is clear since the supremum is attained on a compact set. It remains to show that $t_n^{\beta,\eps}<T$ for all $n\in \N$.
		
		\smallskip
		
		Suppose by contradiction that $t_n^{\beta,\eps}=T$ for some $n$. Thanks to the first step, for any positive $\beta$, $\eps$, and $\eta$ we have that
		\begin{align*}
			\gamma_o-\frac{\eta}{t_o} +\beta(t_o-T)-2\eps|x_o|^2\leq M_{\alpha_\smallfont{n},\beta,\eps} \leq -\frac{\eta}T ,
		\end{align*}
		since 
		\begin{align*}
			M_{\alpha_\smallfont{n},\beta,\eps} =  \sup_{(x,y)\in\R^\smallfont{\xdim}\times\R^\smallfont{d}} \big\{ u(T,x) -v(T,y) - \alpha_n|x-y|^2/2-\eps|x|^2-\eps|y|^2\big\}-\frac{\eta}T,
		\end{align*}
		where the rightmost inequality follows from the assumption $u(T,x)\leq v(T,x)$, $x \in \R^\xdim$. Consequently
		\begin{align*}
			\gamma_o\leq \frac{\eta}{t_o} -\frac{\eta}T  + \beta(T-t_o)+2\eps|x_o|^2,
		\end{align*}
		so that for $\beta$, $\eps$, and $\eta$ sufficiently small, $\gamma_o$ is arbitrarily small which contradicts $\gamma_o>0$. This proves the claim.
		
		\smallskip
		{\bf Step 3.} In light of the second step, it follows from Crandall--Ishii's lemma for parabolic problems, \cite[Theorem 8.3]{crandall1992user}, applied to $u^\eta$ and $v$ that we can find $(q_n,\hat q_n)$, $q_n-\hat q_n=\partial_t \psi_{\alpha,\beta,\eps}(t,x,y)=-\beta$, and symmetric matrices $(X_n^{\beta,\eps},Y_n^{\beta,\eps})$ such that
		\begin{align*}
			\begin{split}
				\big(q_n, \alpha_n(x_n^{\beta,\eps}-y_n^{\beta,\eps})+\eps x_n^{\beta,\eps} ,X_n^{\beta,\eps}\big) &\in \overline \Pc^{1,2,+}u^\eta(x_n^{\beta,\eps}),\\ \big(\hat q_n, -\alpha_n(x_n^{\beta,\eps}-y_n^{\beta,\eps})+\eps  y_n^{\beta,\eps} ,Y_n^{\beta,\eps}\big) &\in \overline \Pc^{1,2,-} v(y_n^{\beta,\eps}),\; 
			\end{split}
		\end{align*}
		and, for $C_n\coloneqq \alpha_n\begin{pmatrix}
			I_{\xdim}  & -I_{\xdim}\\
			-I_{\xdim} & I_{\xdim}
		\end{pmatrix}+\eps I_{2\xdim} $, we have that
		\[
		-\bigg( \frac1\lambda + \| C_n \|\bigg) I_{2 \xdim }\leq \begin{pmatrix}
			X_n^{\beta,\eps} & 0\\
			0 & -Y_n^{\beta,\eps}
		\end{pmatrix}\leq C_n (I_{2 \xdim } +\lambda C_n ), \text{ for all }\lambda >0.
		\]
		Taking $\lambda= (\alpha_n+\eps)^{-1} $ leads to
		
		\begin{align}\label{eq.CI.lemma}
			-\Big( \alpha_n+\eps + \| C_n \|\Big) I_{2 \xdim }\leq \begin{pmatrix}
				X_n^{\beta,\eps} & 0\\
				0 & -Y_n^{\beta,\eps}
			\end{pmatrix}\leq 3 \alpha_n \begin{pmatrix}
				I_{\xdim}  & -I_{\xdim}\\
				-I_{\xdim} & I_{\xdim}
			\end{pmatrix} +2 \eps I_{2d} .
		\end{align}
		{\bf Step 4.} With the notation $(t_{n} ,x_{n} ,y_{n} )\coloneqq (t_{n}^{\beta,\eps},x_{n}^{\beta,\eps},y_{n}^{\beta,\eps})$, $p_n^x\coloneqq \alpha_n(x_{n} -y_{n} )-\eps x_n $, $p_n^y\coloneqq \alpha_n(x_{n} -y_{n})-\eps y_n$, under the above assumptions we claim that
		there exists a universal constant $C>0$ such that
		\begin{align*}
			\begin{split}
				&\ H^-(t_{n} ,y_{n} ,p_n^y,Q_2)-  H^-(t_{n} ,x_{n} ,p_n^x,Q_1) \\
				\leq &\ C\big(1+\eps^2\|x_n\|+\eps^2\|y_n\|+\eps \big)\big ( \alpha_n\|x_{n} -y_{n} \|^2 + \|x_n-y_n\| + \eps \big)
			\end{split}
		\end{align*}
		for matrices $Q_1$, $Q_2$ satisfying \eqref{eq.CI.lemma}. 
		We consider each term in $h^{\rm b}$ separately, recall \eqref{eq:h_b} and \eqref{eq.optmatchvol}.\footnote{The following estimates hold for arbitrary, but fixed, $(a,\gamma,b^\star)$.} 
		
		\smallskip
		Letting $\Sigma^x\coloneqq \sigma^{b^\smallfont{\star}\!} (t_n,x_n,a,p_n^x,\gamma),\Sigma^y\coloneqq \sigma^{b^\smallfont{\star}\!} (t_n,y_n,a,p_n^y,\gamma)$, note that there is $C>0$ such that
		\begin{align*}
			&\text{Tr}[ (\sigma\sigma^\t) (t_n,y_n,a,b^\star(t,y_n,p_n^y,\gamma,a))Q_2]- \text{Tr}[\sigma\sigma^\t (t_n,x_n,a,b^\star(t,x_n,p_n^x,\gamma,a))Q_1]\\
			&=  \text{Tr}\bigg[  \begin{pmatrix} \Sigma^x{\Sigma^x}^\top & \Sigma^x{\Sigma^y}^\top\\
				\Sigma^y {\Sigma^x}^\top &  \Sigma^y{\Sigma^y}^\top   \end{pmatrix}  \begin{pmatrix} Q_2 & 0\\
				0 & - Q_1  \end{pmatrix} \bigg]\\
			&\leq 3\alpha_n  \text{Tr}\bigg[ \begin{pmatrix} \Sigma^x{\Sigma^x}^\top & \Sigma^x{\Sigma^y}^\top\\
				\Sigma^y {\Sigma^x}^\top &  \Sigma^y{\Sigma^y}^\top   \end{pmatrix}  \begin{pmatrix} I_{ \xdim } & -I_{ \xdim}\\
				- I_{ \xdim } & I_{ \xdim }  \end{pmatrix} \bigg]+2\eps  \text{Tr}\bigg[ \begin{pmatrix} \Sigma^x{\Sigma^x}^\top & \Sigma^x{\Sigma^y}^\top\\
				\Sigma^y {\Sigma^x}^\top &  \Sigma^y{\Sigma^y}^\top   \end{pmatrix}  I_{ 2\xdim }   \bigg]\\
			&=3\alpha_n \text{Tr}\big[ (\Sigma^x-\Sigma^y)(\Sigma^x-\Sigma^y)^\top\big]+2\eps \text{Tr} \big[\Sigma^x {\Sigma^x}^\t+\Sigma^y {\Sigma^y} ^\top \big]\\
			&=3\alpha_n \| \Sigma^x-\Sigma^y\|^2+2\eps \text{Tr} \big[\Sigma^x {\Sigma^x}^\t+\Sigma^y {\Sigma^y} ^\top \big]\\
			& \leq 3\alpha_n \|\sigma^{b^\smallfont{\star}\!} (t_n,x_n,p_n^x,\gamma,a)-\sigma^{b^\smallfont{\star}\!} (t_n,y_n,p_n^y,\gamma,a)\|^2+4\eps C_{\sigma}  \leq   C\big( (1+\eps)\alpha_n\|x_n-y_n\|^2+\eps\big),
		\end{align*}
		where the first inequality follows from the right-hand side of \eqref{eq.CI.lemma}, $C_{\sigma}$ denotes the bound on $\sigma\sigma^\t$ given by  \Cref{Assumption.data}, and the last inequality follows from \Cref{assumption.strongtarget}. Similarly, note that there is a constant $C>0$ such that
		\begin{align*}
			& c^{b^\smallfont{\star}\!}(t_n,y_n ,p_n^y,\gamma,a)-  c^{b^\smallfont{\star}\!} (t_n,x_n ,p_n^x,\gamma,a) \\
			\leq &\ C \big( \| x_n-y_n\| + \| b^\star(t_n,y_n,p_n^y,\gamma,a) - b^\star(t_n,x_n,p_n^x,\gamma,a) \| \big)\\
			\leq &\ C \big( \| x_n-y_n\|+\| p_n^y -p_n^x \|\big)\leq C \big( \| x_n-y_n\|+ \eps \big)  ,
		\end{align*}
		and
		\begin{align*}
			& \sigma   \lambda^{b^\smallfont{\star}\!}  (t_n,x_n,p_n^x,\gamma,a) \cdot p_n^x
			-\sigma   \lambda^{b^\smallfont{\star}\!}  (t_n,y_n,p_n^y,\gamma,a)  \cdot p_n^y\\
			\leq &\ \| \sigma   \lambda^{b^\smallfont{\star}\!}  (t_n,x_n,p_n^x,\gamma,a)\|\|  p_n^x-p_n^y\|+ \| \sigma  \lambda^{b^\smallfont{\star}\!}  (t_n,x_n,p_n^x,\gamma,a)-\sigma  \lambda^{b^\smallfont{\star}\!} (t_n,y_n,p_n^y,\gamma,a)  \| \| p_n^y\| \\
			\leq &\ \eps C \|x_n-y_n\| + C\| p_n^y\| (1+\eps)  \|x_n-y_n\| \\
			\leq  &\ C (1+\eps+\eps^2\|y_n\| ) \big( \|x_n-y_n\|  +  \alpha_n \|x_n-y_n\|^2\big).
		\end{align*}
		The result follows from using these estimates back in the Hamiltonian.\smallskip
		
		{\bf Step 5.} We conclude. By {\bf Step 3} and the viscosity properties of $u^\eta$ and $v$, we have
		\[
		- q_n + H^-\big( t_n^{\beta,\eps},  x_n^{\beta,\eps}, \alpha_n(x_n^{\beta,\eps}-y_n^{\beta,\eps})-\eps x_n^{\beta,\eps} , X_n^{\beta,\eps}\big) \leq 0\leq - \hat q_n  + H^- \big(t_n^{\beta,\eps},   y_n^{\beta,\eps},  \alpha_n(x_n^{\beta,\eps}-y_n^{\beta,\eps})-\eps y_n^{\beta,\eps} , Y_n^{\beta,\eps}\big) .
		\] 
		Subtracting, we find from {\bf Step 4} that
		\begin{align*}
			\beta &=\hat q_n-q_n \\
			& \leq  H^- \big(t_n^{\beta,\eps},   y_n^{\beta,\eps},  \alpha_n(x_n^{\beta,\eps}-y_n^{\beta,\eps})-\eps y_n^{\beta,\eps} , Y_n^{\beta,\eps}\big) 
			-H^-\big( t_n^{\beta,\eps},  x_n^{\beta,\eps}, \alpha_n(x_n^{\beta,\eps}-y_n^{\beta,\eps})-\eps x_n^{\beta,\eps} , X_n^{\beta,\eps}\big)\\
			& \leq C (1+\eps^2 \|x_n^{\beta,\eps}\|+\eps^2\|y_n^{\beta,\eps}\|+\eps )\big ( \alpha_n \|x_{n}^{\beta,\eps} -y_{n}^{\beta,\eps} \|^2 + \|x_n^{\beta,\eps}-y_n^{\beta,\eps}\| + \eps \big).
		\end{align*}
		Passing to the limit $n\longrightarrow \infty$ and $\eps\longrightarrow 0$, thanks to \eqref{eq.ci}, we find that $\beta\leq 0$ which is a contradiction.
	\end{proof}
\end{lemma}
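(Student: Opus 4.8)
The plan is to prove the comparison principle by the classical doubling-of-variables technique for parabolic viscosity solutions, adapted to the unbounded spatial domain and to the fact that the Hamiltonian $H^-$ is an infimum over the non-compact control set $A\times\{p\}\times\S^\xdim\times\Bc^\star$. First I would reduce to a \emph{strict} subsolution by replacing $u$ with $u^\eta(t,x)\coloneqq u(t,x)-\eta/t$, which for $\eta>0$ is a strict viscosity subsolution of \eqref{eq:pde-inf} on $\Oc$ since $\partial_t(-\eta/t)=\eta/t^2>0$. Arguing by contradiction, I would assume $\gamma_o\coloneqq(u-v)(t_o,x_o)>0$ for some $(t_o,x_o)\in\Oc$, and for parameters $\alpha,\beta,\eps>0$ maximise $\phi(t,x,y)-\psi_{\alpha,\beta,\eps}(t,x,y)$ over $(0,T]\times\R^\xdim\times\R^\xdim$, where $\phi(t,x,y)\coloneqq u^\eta(t,x)-v(t,y)$ and $\psi_{\alpha,\beta,\eps}(t,x,y)\coloneqq\alpha|x-y|^2/2+\eps(|x|^2+|y|^2)-\beta(t-T)$. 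The linear-growth bounds on $u,v$ together with the coercive terms $\eps(|x|^2+|y|^2)$ and $-\eta/t$ guarantee the supremum is attained at some $(t_{\alpha,\beta,\eps},x_{\alpha,\beta,\eps},y_{\alpha,\beta,\eps})$ lying in a compact set, and along a subsequence $\alpha_n\to\infty$ the standard argument yields convergence of the maximisers to a diagonal point with $\alpha_n|x_n-y_n|^2\to0$ and a positive limiting value once $\beta,\eta,\eps$ are small. The boundary condition $u(T,\cdot)\le v(T,\cdot)$ rules out $t_n=T$ for $\beta,\eta,\eps$ small, and the blow-up of $\eta/t$ rules out $t_n\to0$, so the maximiser is interior.

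At the interior maximiser I would invoke the parabolic Crandall--Ishii lemma to produce $(q_n,p_n^x,X_n)\in\overline{\mathcal P}^{1,2,+}u^\eta$ and $(\hat q_n,p_n^y,Y_n)\in\overline{\mathcal P}^{1,2,-}v$ with $q_n-\hat q_n=-\beta$, first-order slopes $p_n^x,p_n^y$ of the form $\alpha_n(x_n-y_n)+O(\eps)$, and, after choosing the free parameter $\lambda=(\alpha_n+\eps)^{-1}$, the matrix bound
\[
\begin{pmatrix}X_n&0\\0&-Y_n\end{pmatrix}\le 3\alpha_n\begin{pmatrix}I_\xdim&-I_\xdim\\-I_\xdim&I_\xdim\end{pmatrix}+2\eps I_{2\xdim}.
\]
The viscosity sub- and supersolution properties then give $-q_n+H^-(t_n,x_n,p_n^x,X_n)\le0\le-\hat q_n+H^-(t_n,y_n,p_n^y,Y_n)$, whence $\beta=\hat q_n-q_n\le H^-(t_n,y_n,p_n^y,Y_n)-H^-(t_n,x_n,p_n^x,X_n)$, and it remains to bound this difference.

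Since $H^-$ is an infimum of $h^{\rm b}$ over $(a,z,\gamma,b^\star)\in A\times\{p\}\times\S^\xdim\times\Bc^\star$ (with $z=p$ forced by $N(t,x,p)$), I would pick $(a,\gamma,b^\star)$ that is $\delta$-optimal for the infimum defining $H^-(t_n,x_n,p_n^x,X_n)$ and use $H^-(t_n,y_n,p_n^y,Y_n)\le h^{\rm b}(t_n,y_n,p_n^y,Y_n,a,p_n^y,\gamma,b^\star)$; letting $\delta\to0$ reduces everything to estimating $h^{\rm b}(t_n,y_n,\cdot)-h^{\rm b}(t_n,x_n,\cdot)$ \emph{uniformly} in $(a,\gamma,b^\star)$. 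Treating the three terms of $h^{\rm b}$ separately—the $\mathrm{Tr}[(\sigma\sigma^\t)^{b^\smallfont{\star}}Q]$ term via the right-hand side of the matrix inequality combined with Lipschitz continuity of $\sigma$ in $(x,b)$ and of $b^\star$ in $(x,z)$ from \Cref{assumption.strongtarget}; the running-cost term $c^{b^\smallfont{\star}}$ via Lipschitz continuity of $c$ and $b^\star$; the drift term $(\sigma\lambda)^{b^\smallfont{\star}}\cdot p$ via Lipschitz continuity of $\sigma\lambda$ and boundedness of the coefficients from \Cref{Assumption.data}—I would obtain a universal $C>0$ with
\[
H^-(t_n,y_n,p_n^y,Y_n)-H^-(t_n,x_n,p_n^x,X_n)\le C\big(1+\eps^2\|x_n\|+\eps^2\|y_n\|+\eps\big)\big(\alpha_n\|x_n-y_n\|^2+\|x_n-y_n\|+\eps\big).
\]
Passing to the limit $n\to\infty$, using $\alpha_n\|x_n-y_n\|^2\to0$, $x_n,y_n$ converging to a common point, and $\eps^2\|x_n\|^2\le\eps\cdot(\eps|x_n|^2)$ bounded uniformly, and then letting $\eps\to0$, yields $\beta\le0$, contradicting $\beta>0$. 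Hence $u^\eta\le v$ on $\Oc$, and $\eta\to0$ gives $u\le v$.

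The main obstacle is the structure estimate in the third step: making it uniform over the non-compact control set—in particular the Lipschitz bound for $b^\star$ must be uniform in $(t,\gamma,a)$, which is precisely why \Cref{assumption.strongtarget} is imposed—while simultaneously controlling the localisation error terms $\eps^2\|x_n\|,\eps^2\|y_n\|$ generated by the $\eps|x|^2+\eps|y|^2$ penalty in the unbounded spatial domain so that they vanish in the iterated limit $n\to\infty$, $\eps\to0$. The remaining steps are the textbook parabolic comparison argument, aided here by the fact that all data of the problem are bounded.
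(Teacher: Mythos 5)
Your proposal is correct and follows essentially the same route as the paper's proof: reduction to a strict subsolution via $u-\eta/t$, doubling of variables with the penalty $\alpha|x-y|^2/2+\eps(|x|^2+|y|^2)-\beta(t-T)$, the parabolic Crandall--Ishii lemma with $\lambda=(\alpha_n+\eps)^{-1}$, a term-by-term Lipschitz estimate of $h^{\rm b}$ uniform in $(a,\gamma,b^\star)$ using \Cref{Assumption.data} and \Cref{assumption.strongtarget}, and the iterated limit $n\to\infty$, $\eps\to0$ to contradict $\beta>0$. The only cosmetic difference is that you make explicit the $\delta$-optimal-control selection when comparing the two infima, which the paper leaves implicit by fixing an arbitrary $(a,\gamma,b^\star)$.
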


The next lemma proves, in particular, that the auxiliary value function satisfies the hypotheses of \Cref{lemma-comparison}.

\begin{lemma}\label{prop:bounded-aux-value}
	{Suppose the functions $H^+$ and $H^-$ are continuous}. The functions $w^-$ and $w^+$ from $[0,T]\times\R^d$ to $\R$ defined in \eqref{eq:auxiliary-w1} are bounded and continuous.
\end{lemma}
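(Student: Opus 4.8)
The plan is to establish the result for $w^-$, the argument for $w^+$ being symmetric after the usual change of variables. First I would prove boundedness. Recall that under Assumptions \ref{Assumption.data} and \ref{Assumption.data.2}, the coefficients $c$, $g$, $\lambda\sigma$, $\sigma$ are bounded, and $(\sigma\sigma^\t)$ is uniformly elliptic and bounded. The idea is that $w^-(t,x) = \inf\{y : (x,y)\in V_g^-(t)\}$ is the smallest initial value of $Y$ from which, using some admissible $\upsilon\in\Cf(t,x,y)$, one can guarantee $Y_T^{t,x,y,\upsilon}\ge g(X_T^{t,x,\upsilon})$ $\P$--a.s. Writing the dynamics of $Y$ from \eqref{eq.state.strongformulation}, $Y_T = y - \int_t^T c^{b^\smallfont{\star}\!}(s,X_s,\hat\upsilon_s)\d s + \int_t^T Z_s\cdot\sigma^{b^\smallfont{\star}\!}(s,X_s,\hat\upsilon_s)\d W_s$; since $|c|\le \ell_{\rm r}$ and $|g|\le\ell_{\rm r}$, choosing for instance $Z\equiv 0$ (and any admissible $(\alpha,\Gamma,b^\star)$, which is admissible since constants are), one gets $Y_T \ge y - \ell_{\rm r}T \ge g(X_T)$ as soon as $y \ge \ell_{\rm r}(1+T)$. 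Hence $w^-(t,x)\le \ell_{\rm r}(1+T)$ for all $(t,x)$. For the lower bound, one uses the minimality characterisation: if $(x,y)\in V_g^-(t)$ then there is $\upsilon$ with $Y_T^{t,x,y,\upsilon}\ge g(X_T^{t,x,\upsilon})\ge -\ell_{\rm r}$, $\P$--a.s.; taking $\P$-expectation and using that $\int Z\cdot\sigma\,\d W$ is a true martingale (by the integrability in the definition of $\Cf$ and boundedness of $\sigma$), one obtains $y - \E^\P[\int_t^T c^{b^\smallfont{\star}\!}\d s]\ge -\ell_{\rm r}$, whence $y\ge -\ell_{\rm r}(1+T)$. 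Thus $|w^-|\le \ell_{\rm r}(1+T)$; in particular $w^-$ has (at most) linear growth, so it is eligible for the comparison principle of \Cref{lemma-comparison}.

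Next I would establish continuity. Since $w^-$ is already known (from \cite{soner2002stochastic,bouchard2009stochastic,bouchard2010optimal}) to be a discontinuous viscosity solution of PDE \eqref{eq:pde-inf}, the standard route is: let $w^-_*$ and $(w^-)^*$ denote its lower- and upper-semicontinuous envelopes, which are respectively a viscosity super- and sub-solution of \eqref{eq:pde-inf}; both are bounded by the first part, hence have linear growth. The terminal condition requires care — the PDE holds with the relaxed boundary data $w(T^-,x)=g(x)$ — but under the assumption that $H^+$ and $H^-$ are continuous (and recalling the invertibility of $\sigma\sigma^\t$, which makes the constraint set $N(t,x,p)=A\times\{p\}\times\S^\xdim\times\Bc^\star$ non-degenerate), a barrier-function argument at $t=T$, together with the continuity of $g$ (itself continuous as it is bounded and, after the Markovian reduction, a continuous function of $x$ — here one should invoke the relevant continuity hypothesis on $g$ in the strong formulation), shows $(w^-)^*(T,x)\le g(x)\le w^-_*(T,x)$. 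Then \Cref{lemma-comparison} applied to $u=(w^-)^*$ and $v=w^-_*$ gives $(w^-)^*\le w^-_*$ on $(0,T)\times\R^d$; the reverse inequality is trivial, so $(w^-)^* = w^-_* = w^-$, i.e. $w^-$ is continuous on $(0,T)\times\R^d$, and the comparison at $T$ extends this up to $t=T$.

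I would then repeat the two steps verbatim for $w^+$: boundedness by the same martingale/bounded-coefficient estimate (now using $Y_T\le g(X_T)$), and continuity via the envelopes of the viscosity solution to \eqref{eq:pde-sup} together with the comparison principle that \Cref{lemma-comparison} provides for that equation (the lemma is stated for \eqref{eq:pde-inf} but, as remarked in the text, the analogous statement for \eqref{eq:pde-sup} follows by the same arguments).

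The main obstacle I anticipate is the behaviour at the terminal time: reconciling the relaxed boundary condition $w(T^-,x)=g(x)$ with genuine continuity up to $T$. This is the classical subtlety in stochastic target problems — one must rule out a jump of $w^-$ at $T$ — and it is exactly where the hypotheses that $H^+,H^-$ are continuous, that $\sigma\sigma^\t$ is invertible and bounded, and that $g$ is continuous (together with the uniform Lipschitz regularity in \Cref{assumption.strongtarget}) are used, to build suitable smooth super- and sub-solutions acting as barriers near $t=T$. Everything else is a routine application of the comparison principle (\Cref{lemma-comparison}) and the a priori bounds coming from Assumption \ref{Assumption.data.2}.
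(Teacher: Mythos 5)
Your proposal is correct and follows essentially the same route as the paper: boundedness via the explicit control $Z\equiv 0$ for the upper bound and a martingale/expectation estimate for the lower bound (the paper phrases the latter as a supermartingale contrapositive, which is equivalent), then continuity by showing the semicontinuous envelopes of the discontinuous viscosity solution satisfy the ordered terminal condition and invoking the comparison principle of \Cref{lemma-comparison}. The terminal-time step you flag as the main obstacle is handled in the paper by directly citing \cite[Theorem 2.2]{bouchard2009stochastic} (noting the absence of gradient constraints), which is precisely the barrier-type argument you sketch.
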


{For completing the last step in the verification result, we have assumed the continuity of the Hamiltonian functions. We remark that this assumption holds, for instance, if the optimisation over $\gamma$ in the definition of $H^+$ and $H^-$ can be reduced to a compact set, continuously with respect to $(t,x,p,Q)$.}

\begin{proof}[Proof of {\rm \Cref{prop:bounded-aux-value}}]
	{We prove the result for $w^-$, the other being analogous}. We first argue $w^-$ is bounded. Let $(t,x)\in[0,T]\times\R^d$ and $y> T \ell_c + \ell_g$. We claim that $(x,y)\in V_g(t)$. Indeed, taking the control $Z = 0$, $\Gamma = 0$ and any $(\alpha,b^\star)\in\Ac\times \Bc^\star$ we have
	\[
	Y_T^{t,x,y,\upsilon }= y - \int_t^T c^{b^\smallfont{\star}\!} \big(s,X_s^{t,x,\upsilon},Z_s,\hat\upsilon\big)\textrm{d}s \geq y - T \ell_c > \ell_g \geq g(X_T^{t,x,\upsilon}).
	\]
	That is $w^-(t,x) \leq T \ell_c + \ell_g$. 
	To obtain a lower bound take again $(t,x)\in[0,T]\times\R^d$ and $y < -T\ell_c -\ell_g $. Then, it is easy to check that for any $M\in\R$ and any $\upsilon \in \Cf$ the following process is an $(\F,\P)$--super-martingale 
	\[
	A_s\coloneqq  Y_s^{t,x,y,\upsilon} - s \ell_c +M, \; s\in[0,T].
	\]
	Thus, choosing $M= T \ell_c + \ell_g$, we have that $\E^\P[Y_T^{t,x,y,\upsilon} - T\ell_c + M] \leq y + M < 0$, which implies
	$\P[Y_T^{t,x,y,\upsilon} + \ell_g < 0 ] > 0$.
	Therefore, for any $\upsilon \in \Cf$
	\[
	\P\big[Y_T^{t,x,y,\upsilon} < g(X_T^{t,x,\upsilon})  \big] \geq \P\big[Y_T^{t,x,y,\upsilon} + \ell_g < 0 \big] > 0,
	\]
	which means that the pair $(x,y)\not\in V_g^1(t)$. Thus, $w^-(t,x) \geq - T \ell_c - \ell_g$.\smallskip
	
	Let us now prove the continuity. By \cite[Theorem 2.1]{bouchard2009stochastic}, $w^-$ is a discontinuous viscosity solution to PDE \eqref{eq:pde-sup} as long as we verify Assumption 2.1 therein. 
	Indeed, the continuity condition on the set $N(t,x,p)$ holds in our case given the explicit form that was obtained in \eqref{eq.optmatchvol}. 
	{Since $H^-$ is continuous}, the lower-- and upper--semi-continuous envelopes $w^-_\star$ and $w^{-,\star}$ are viscosity super-solution and sub-solution, respectively, of {\Cref{eq:pde-inf}}. From \cite[Theorem 2.2]{bouchard2009stochastic}, which in our case is not subject to the gradient constraints (see \cite[Remark 2.1]{bouchard2009stochastic}  and notice that in our setting their set \textbf{N}$^c$ is empty), we conclude that $w^{-,\star}(T,\cdot) \leq g \leq  w^-_\star(T,\cdot)$. Finally, from \Cref{lemma-comparison}, we have therefore that $w^{-,\star} \leq w^-_\star$ on $[0,T)\times\R^d$. Since the reverse inequality holds by definition, we conclude the equality of the semicontinuous envelopes and thus the continuity of $w^-$.
\end{proof}

\begin{proof}[Proof of {\rm \Cref{thm.aux.func}}]	
	The result is an immediate consequence of Lemmata \ref{lemma-comparison} and \ref{prop:bounded-aux-value}.
\end{proof}

\end{appendix}

\end{document}